\newtheorem{theorem}{Theorem}
\newtheorem{dfn}{Definition}
\newtheorem{lem}{Lemma}
\newtheorem{cor}{Corollary}
\newtheorem{exm}{Example}
\newtheorem{rem}{Remark}
\def\bn{\mathbf{n}}
\def\NN{\mathbb{N}}
\def\C{\mathbb{C}}
\def\Q{\mathbb{Q}}
\def\N{\mathbb{N}}
\def\di{\displaystyle}
\begin{document}
\title[Isochronous centers of polynomial Hamiltonian systems]{Isochronous centers of polynomial Hamiltonian systems and a conjecture of Jarque and Villadelprat}
\author{Jacky Cresson}
\author{Jordy Palafox}

\begin{abstract}
We study the conjecture of Jarque and Villadelprat stating that every center of a planar polynomial Hamiltonian system of even degree is nonisochronous. This conjecture is prove for quadratic and quartic systems. Using the correction of a vector field to characterize isochronicity and explicit computations of this quantity for polynomial vector fields, we are able to describe a very large class of nonisochronous Hamiltonian system of even degree of degree arbitrary large.
\end{abstract}

\maketitle

\setcounter{tocdepth}{3}
\tableofcontents

\newpage

\part{Introduction and main results} 

\section{The Jarque-Villadelprat conjecture}

In this paper, we study centers of planar polynomial Hamiltonian systems in the real case. In particular we focus on isochronous centers. Our main concern is the following conjecture stated by Jarque and Villadelprat in \cite{jarque1}: Let $X$ be a real polynomial Hamiltonian vector field of the form: 
\begin{equation*}
X(x,y)=-\partial_y H(x,y) \partial_ x +\partial_x H(x,y) \partial_y , \ (x,y) \in \mathbb{R}^2
\end{equation*}
where $H(x,y)$ is a real polynomial in the variables $x$ and $y$. The maximum degree of the polynomials $\partial_x H$ and $\partial_y H$ is the degree of the Hamiltonian vector field. \\

\noindent {\bf Conjecture} : {\it Every center of a planar polynomial Hamiltonian system of even degree is nonisochronous.
}\\

The conjecture is known to be true for quadratic systems thanks to a result of Loud in \cite{loud} and in the quartic case by a result of Jarque-Villadelprat in \cite{jarque1}. The proof of Jarque and Villadelprat is based on a careful study of the bifurcations set and seems difficult to extend to an arbitrary degree. The conjecture is open for the other cases despite partial results in this direction obtain by B. Schuman in \cite{sch1,sch2} using an explicit computation of the first coefficients of the Birkhoff normal form and Chen and al. \cite{chen} proving what they call a {\it weak version} of the conjecture, i.e. that any vector fields having only even components is nonisochronous.\\

Different strategies can be used to go further toward this conjecture. A first class of methods can be called {\it geometric} and are related to some special features of Hamiltonian or isochronous centers. We can mention for example the work of L. Gavrilov \cite{gav} and P. Mardesic, C. Rousseau and B. Toni \cite{marde}. Up to now, these methods are unable to reproduce some special results obtained by B. Schuman \cite{sch1,sch2} for classes of polynomial vector fields of arbitrary degree. Another class of methods can be called {\it analytic} and are more or less all dealing with the computations of quantities which be obtained algorithmically like {\it period constants} \cite{fp} and coefficients of normal forms \cite{cs} (see also \cite{f}). However, such methods are usually assumed to be {\it intractable} when one is dealing with a vector field of arbitrary degree (see for example \cite{jarque1} p.337). This is indeed the case when one has no informations on the algebraic structure of these coefficients. Then one is reduced to compute Grobner bases or to use the elimination method. However, one is quickly limited by the computational complexity and the memory size need to perform these computations. Existing results are restricted to polynomials of order $5$. \\

A natural problem is then to look for methods allowing us to bypass these technical limitations. However, one can efficiently obtain more information on the structure of these coefficients. The idea is to separate in these coefficients what is {\it universal} and what is not.

\section{Main results}

In this paper, we bypass this problem using the formalism of {\it moulds} introduced by Jean Ecalle (see \cite{ec2}, \cite{ec3}) and a particular object attached to a vector field called the {\it correction} defined by Ecalle and Vallet in \cite{ev2}. In particular, we obtain a partial answer to the conjecture for arbitrary degree.\\

It is well known that isochronicity of a real center is equivalent to its linearisability (see \cite{chava}, theorem 3.3, p.12). A main property of the correction is that it gives a very useful criterion for linearisability. Indeed, a vector field is linearisable if and only if its correction is zero. As the correction possesses an algorithmic and explicit form which is easily calculable using mould calculus we are able to give more informations on the isochronous set. This strategy was already used by one of us in \cite{cr3}.\\

In the following, we use the classical complex representation of real vector fields (see \cite{llibre}). Let us denote by $X_{lin}=i(x \partial_x-\bar{x}\partial_{\bar{x}})$ and $X_r=P_r(x,\bar{x})\partial_x+\overline{P_r(x,\bar{x})}$
with $x \in \mathbb{C}$, $P_r$ is a homogeneous polynomial of degree $r$, $P_r(x,\bar{x})=\underset{j=0}{\overset{r}{\sum}}p_{r-j-1,j}x^{r-j}\bar{x}^{j}$.

\begin{theorem}
\label{main1}
Let $X$ be a non trivial real Hamiltonian vector field of even degree 2n given by:
\begin{equation*}
X=X_{lin}+\underset{r=2}{\overset{2n}{\sum}}X_r
\end{equation*}
If $X$ satisfies one of the following conditions : \\
$\textit{a})$ there exists $1 \leq r < n-1$ such that $p_{i,i}=0$ for $i=1,...,r-1$ and $Im(p_{r,r})>0$, \\
 $\textit{b})$ $p_{i,i}=0$ for $i=1,...,n-1$,\\ 
then the vector field is nonisochronous.
\end{theorem}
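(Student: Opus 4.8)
The strategy is to reduce the statement to the nonvanishing of the correction of $X$ and then to extract the relevant homogeneous piece of that correction from its mould formula. Write $\mathrm{Corr}(X)$ for the correction of $X$. Since isochronicity of a real center is equivalent to linearisability and linearisability is equivalent to $\mathrm{Corr}(X)=0$, it is enough to exhibit one nonzero homogeneous component of $\mathrm{Corr}(X)$. As $X$ is a real center with linear part $X_{lin}$, its correction is a real resonant vector field with no radial component, hence of the form $\mathrm{Corr}(X)=\big(\sum_{k\geq 1}g_k\,(x\bar x)^k\big)X_{lin}$ with all $g_k\in\mathbb{R}$, and $X$ is isochronous exactly when every $g_k$ vanishes. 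Finally, a Hamiltonian vector field is divergence-free, and computing the divergence of $X$ in the complex coordinate $x$ yields the relations $(a+1)\,p_{a,b}+(b+1)\,\overline{p_{b,a}}=0$ for $(a,b)\neq(0,0)$; in particular each resonant coefficient $p_{k,k}$ is purely imaginary, so that in $(a)$ one has $p_{r,r}=i\,\mathrm{Im}(p_{r,r})$ with $\mathrm{Im}(p_{r,r})>0$.

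The next step is to substitute $X=X_{lin}+\sum_{r=2}^{2n}X_r$ into the explicit mould expression for $\mathrm{Corr}(X)$ and to sort the result by homogeneous weight, the point being that $g_k$ receives contributions only from iterated products $X_{s_1}\cdots X_{s_m}$ with $(s_1-1)+\dots+(s_m-1)=2k$. Under hypothesis $(a)$, the conditions $p_{i,i}=0$ for $i=1,\dots,r-1$ annihilate every contribution passing through the resonant part of $X_3,\dots,X_{2r-1}$, the only single-factor contribution to the weight-$2r$ slot is the resonant coefficient $p_{r,r}$ of $X_{2r+1}$, and every other contribution is a product of at least two non-resonant coefficients. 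One thus obtains an identity of the form
\[
g_r=\kappa_r\,\mathrm{Im}(p_{r,r})+\Pi_r ,
\]
with $\kappa_r$ an explicit nonzero rational number coming from the correction mould and $\Pi_r$ a polynomial — real, by the divergence relations — in the non-resonant coefficients of $X_2,\dots,X_{2r+1}$ only. The hypothesis $r<n-1$ enters here: it forces $2r+1\leq 2n-3$, so that $\Pi_r$ involves none of the top homogeneous parts $X_{2n-2},X_{2n-1},X_{2n}$.

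The core of the argument is then to prove that $\Pi_r$ cannot offset $\kappa_r\,\mathrm{Im}(p_{r,r})$, and more precisely that $\kappa_r\,\Pi_r\geq 0$: once the lower resonant coefficients are set to zero and the top homogeneous parts are excluded, the non-resonant contribution to $g_r$ is a semidefinite expression in the coefficients of $X_2,\dots,X_{2r+1}$ whose sign agrees with that of $\kappa_r$. This is where the combinatorics of Ecalle's correction mould must be married to the divergence-free constraints, and it is the step I expect to be the main obstacle; it is also where the exclusion of the top parts is essential, so that the case $r=n-1$ — first nonzero resonant coefficient sitting in the top odd-degree part $X_{2n-1}$ — falls outside the reach of this method. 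Granting the semidefiniteness, $g_r$ has the sign of $\kappa_r\,\mathrm{Im}(p_{r,r})\neq 0$, hence $\mathrm{Corr}(X)\neq 0$ and $X$ is nonisochronous.

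For $(b)$ all resonant coefficients vanish, so every $g_k$ is now a polynomial in the non-resonant coefficients alone, and I would proceed by descent on the lowest nonzero homogeneous part $X_{r_0}$ ($r_0\geq 2$) of $X$. Then $g_k=0$ automatically for $k<r_0-1$, while $g_{r_0-1}$ is a nonzero multiple of the resonant coefficient of the composition $X_{r_0}\circ X_{r_0}$, a quadratic form in the coefficients of $X_{r_0}$ that the divergence relations turn into a real expression; combined with the analysis of the higher-weight coefficients $g_{r_0},g_{r_0+1},\dots$ — again controlled by the same semidefiniteness mechanism, now with no competing resonant term — one shows that $\mathrm{Corr}(X)=0$ would force $X_{r_0}=0$, contradicting $X\neq X_{lin}$. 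Hence such an $X$ has nonzero correction and is nonisochronous. In both parts, the decisive and most delicate ingredient is the sign/semidefiniteness control of the non-resonant part of the correction coefficients, which is what renders the a priori intractable computation usable at arbitrary even degree.
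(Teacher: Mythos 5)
Your overall skeleton matches the paper's: reduction to the nonvanishing of the correction, the form $\mathrm{Corr}(X)=\sum_k g_k (x\bar x)^k X_{lin}$ with $g_k$ real, the divergence relations $(a+1)p_{a,b}+(b+1)\overline{p_{b,a}}=0$, and the resulting fact that each $p_{k,k}$ is purely imaginary are all exactly the ingredients the paper uses (its Hamiltonian-condition Lemma and its theorem on the algebraic structure of $Carr_{2p}$). The gap is precisely the step you flag as the main obstacle: the claimed semidefiniteness $\kappa_r\,\Pi_r\geq 0$ of the entire non-resonant contribution to $g_r$. This is not only unproven, it is false as stated. The paper itself computes
\[
Carr_{4,4}(X_2,X_2,X_2,X_2)=i\Bigl(-144|p_{1,0}|^4+12|p_{1,0}|^2|p_{-1,2}|^2-\tfrac{8}{9}|p_{-1,2}|^4+40\,\mathrm{Re}(p_{-1,2}p_{0,1}^3)\Bigr),
\]
and $Carr_{4,3}(X_3,X_2,X_2)$ as a combination of imaginary parts of triple products; neither expression has a sign, and imposing $p_{1,1}=0$ does not remove them, since $X_2$ and $X_3$ retain all their non-resonant coefficients. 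So already for $r=2$ the polynomial $\Pi_r$ is indefinite, and no amount of mould combinatorics will make it semidefinite.

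What the paper does instead is an inductive cascade that sidesteps this entirely. Its Fundamental Lemma computes the first possibly nonzero correction term of $X_{lin}+\sum_{j\geq s}X_j$: it equals $p_{s-1,s-1}$ plus $i$ times an explicit \emph{positive definite} quadratic form in the coefficients of $X_s$ alone, because once everything below $X_s$ is gone only length-one and length-two words can reach depth $2(s-1)$. Hence $Carr_2(X)=0$ together with $p_{1,1}=0$ forces $X_2=0$ outright; then $Carr_4(X)=0$ with $p_{2,2}=0$ forces $X_3=0$; and so on. Only after $X_2=\dots=X_r=0$ have been eliminated does one examine $Carr_{2r}(X)=p_{r,r}+i\,Q(X_{r+1})$ with $Q$ positive definite, where $\mathrm{Im}(p_{r,r})>0$ gives the contradiction; in case (b) the cascade empties everything up to $X_{2n-1}$ and the definiteness of $Q$ applied to the nontrivial $X_{2n}$ concludes. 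Your descent on the lowest nonzero component in part (b) is close to this mechanism, but in both parts you must replace the global semidefiniteness of $\Pi_r$ by this step-by-step elimination together with the explicit positive definite length-two formula; otherwise the argument does not close. Incidentally, the hypothesis $r<n-1$ is not what shields $\Pi_r$ from the top components: the depth-$2r$ correction never involves any $X_s$ with $s>2r+1$ in any case; that hypothesis merely separates case (a) from the homogeneous endgame of case (b).
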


As a consequence we deduce that:
\begin{align*}
X&=X_{lin}+X_2 , \\
X&=X_{lin}+X_2+X_3+X_4 \ \text{with} \ p_{1,1} \geq 0, \\
X&=X_{lin}+X_2+X_3+X_4+X_5+X_6 \ \text{with} \ p_{1,1}>0 \ \text{or} \ p_{1,1}=0 \ \text{and} \ p_{2,2}>0,\end{align*}  are nonisochronous.\\

As a corollary, we obtain the {\it weak version} of the Jarque-Villadelprat conjecture proved by X. Chen and al. \cite{chen}:

\begin{cor}[weak Jarque-Villadelprat conjecture]
Let $X$ be a non trivial real Hamiltonian vector field of even degree 2n given by
$$X=X_{lin} +X_2 +X_4 +\dots +X_{2n},$$
then $X$ is nonisochrnous.
\end{cor}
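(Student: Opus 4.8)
The plan is to read the corollary off from Theorem~\ref{main1}, case~b), after one harmless piece of index bookkeeping. Recall that in the complex representation each homogeneous part is $X_r = P_r(x,\bar x)\,\partial_x + \overline{P_r(x,\bar x)}\,\partial_{\bar x}$ with $P_r(x,\bar x) = \sum_{j=0}^{r} p_{r-j-1,j}\, x^{r-j}\bar x^{j}$. Hence a ``diagonal'' coefficient $p_{i,i}$ occurring in the hypotheses of Theorem~\ref{main1} can appear in $P_r$ only when $r-j-1 = i$ and $j = i$ simultaneously, i.e.\ only when $r = 2i+1$. In other words, each $p_{i,i}$ lives in a single homogeneous component of the vector field, and that component has \emph{odd} degree $2i+1$.

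Now assume $X = X_{lin} + X_2 + X_4 + \dots + X_{2n}$, so that $X_r = 0$ for every odd $r$ with $3 \le r \le 2n-1$. By the previous paragraph, for each $i = 1, \dots, n-1$ the coefficient $p_{i,i}$ is a coefficient of $X_{2i+1}$, and $3 \le 2i+1 \le 2n-1$, hence $p_{i,i} = 0$. Thus condition~b) of Theorem~\ref{main1} is satisfied. Since $X$ is moreover non trivial, Hamiltonian, and of even degree $2n$, Theorem~\ref{main1} applies and yields that $X$ is nonisochronous, which is exactly the weak Jarque--Villadelprat conjecture of Chen et al.~\cite{chen}.

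There is no real obstacle in this deduction: the whole content sits in Theorem~\ref{main1}~b), whose proof proceeds through the explicit mould computation of the correction of $X$ together with the fact that linearisability --- equivalently, by \cite{chava}, isochronicity --- forces the correction to vanish. The only point worth checking by hand is the degenerate case $n = 1$ (so $X = X_{lin} + X_2$), where the index range ``$i = 1, \dots, n-1$'' is empty and condition~b) holds vacuously; in that case the corollary recovers Loud's theorem for quadratic Hamiltonian centers.
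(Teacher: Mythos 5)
Your deduction is correct and is essentially identical to the paper's own one-line proof: since each diagonal coefficient $p_{i,i}$ can only occur in the odd-degree component $X_{2i+1}$, the absence of odd components forces $p_{i,i}=0$ for $i=1,\dots,n-1$, so condition b) of Theorem \ref{main1} applies. Your extra remark on the vacuous case $n=1$ is a harmless addition not present in the paper.
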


The proof follows easily from Theorem \ref{main1} as for all $i=1,\dots ,n-1$, we have $p_{i,i}=0$ due to the fact that there exists no odd components. 

\begin{theorem}
\label{main2}
Let $X$ be a non trivial real Hamiltonian vector field of the form:
\begin{equation*}
X=X_{lin}+X_k+...+X_{2l},
\end{equation*}
for $k \geq 2 $ and $l \leq k-1$. Then $X$ is nonisochronous.
\end{theorem}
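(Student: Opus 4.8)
The plan is to produce an explicit nonzero homogeneous component of the correction $\mathrm{Corr}(X)$: since for a real center isochronicity is equivalent to linearisability, which in turn holds iff $\mathrm{Corr}(X)=0$, this yields nonisochronicity. As $X$ is nontrivial, let $m$ be the least index with $X_m\neq 0$. From $k\le m\le 2l$ and $l\le k-1$ we get $m\ge k\ge l+1\ge 2$, hence $2m-1\ge 2l+1>2l$; in particular $X_{2m-1}=0$.

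First I would localize, by a degree count, a component of $\mathrm{Corr}(X)$ governed only by $X_m$. The resonant monomials of $X_{lin}$ are $|x|^{2s}x\,\partial_x$ and their conjugates, sitting in degree $2s+1$, and a $j$-fold composition $X_{r_1}\cdots X_{r_j}$ contributes in degree $r_1+\cdots+r_j-(j-1)$. For a resonant term of degree $2m-1$: the single term $X_{2m-1}$ vanishes; a $j$-fold one with every $r_i\ge m$ needs $r_1+\cdots+r_j=2m-1+(j-1)$, i.e. $(j-2)(m-1)\le 0$, forcing $j\le 2$ (as $m\ge 2$) and $r_1=r_2=m$. Hence $[\mathrm{Corr}(X)]_{2m-1}$ depends only on $X_m$; likewise, when $m$ is odd, $[\mathrm{Corr}(X)]_m=p_{\frac{m-1}{2},\frac{m-1}{2}}|x|^{m-1}x\,\partial_x+\text{c.c.}$ is just the resonant part of $X_m$.

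Next I would compute $[\mathrm{Corr}(X)]_{2m-1}$. Let $\mathcal B$ be the solution of the homological equation $[X_{lin},\mathcal B]=X_m^{\mathrm{nres}}$ (the unique non-resonant degree-$m$ field). One homological step removes $X_m^{\mathrm{nres}}$ and creates, in degree $2m-1$, the resonant term $\tfrac12\,\Pi_{\mathrm{res}}[\mathcal B,X_m]$, where $\Pi_{\mathrm{res}}$ is the projection onto resonant monomials; since $\mathrm{Corr}(X)$ is intrinsic and, by the degree count above, its degree-$(2m-1)$ part involves only $X_m$, it equals this term up to a universal constant (equivalently, one reads it off the mould formula for $\mathrm{Corr}$). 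A direct evaluation of the bracket gives, for the coefficient $C$ of $x^m\bar x^{m-1}\partial_x$,
\[ C=\frac{i}{2}\sum_{2j\neq m-1}p_{m-1-j,\,j}\,p_{j,\,m-1-j}+i\sum_{2j\neq m-1}\frac{j\,|p_{m-1-j,\,j}|^2}{m-2j-1}, \]
summed over $0\le j\le m$ with the convention $p_{c,d}:=0$ for $d<0$. Then I would impose that a Hamiltonian field is divergence-free, which in the complex coordinate reads $(a+1)p_{a,b}=-(b+1)\overline{p_{b,a}}$ for $a+b=m-1$, $a\ge 0$ (with $p_{-1,m}$ unconstrained). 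This turns $p_{m-1-j,j}\,p_{j,m-1-j}$ into $-\tfrac{m-j}{j+1}|p_{m-1-j,j}|^2$, pairs the index $j$ with $m-1-j$, and — using $(a-b)(a+b+1)(a+b+2)=(a-b)\,m(m+1)$ — collapses $C$ to
\[ [\mathrm{Corr}(X)]_{2m-1}=-i\Big(\sum_{a+b=m-1,\ a>b\ge0}\tfrac{m(m+1)}{(b+1)^2}|p_{a,b}|^2+\tfrac{m}{m+1}|p_{-1,m}|^2\Big)\,x^m\bar x^{m-1}\partial_x+\text{c.c.}, \]
that is, $-i$ times a positive-definite Hermitian form in the off-diagonal (non-resonant) coefficients of $X_m$.

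The theorem then follows: if $m$ is odd and $p_{\frac{m-1}{2},\frac{m-1}{2}}\neq 0$ then $[\mathrm{Corr}(X)]_m\neq 0$; otherwise $X_m\neq 0$ forces, through the Hamiltonian relations, $p_{a,b}\neq 0$ for some $a>b\ge 0$ with $a+b=m-1$, or $p_{-1,m}\neq 0$, whence $[\mathrm{Corr}(X)]_{2m-1}\neq 0$. In every case $\mathrm{Corr}(X)\neq 0$, so $X$ is not linearisable, hence nonisochronous. I expect the main obstacle to be the algebra behind the last display: \emph{without} the divergence-free relations the quadratic form $C$ is genuinely indefinite (already for $m=4$ some of its coefficients have opposite signs), so definiteness is a true consequence of the Hamiltonian structure, and making the coefficients collapse uniformly in $m$ via that factorization is the crux. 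A secondary point to settle is that the depth-$\le 2$ part of Ecalle's correction coincides with the resonant output of one homological step, which is what legitimizes the closed formula for $C$.
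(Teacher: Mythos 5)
Your proposal is correct and follows essentially the same route as the paper: you isolate the lowest nonzero homogeneous component $X_m$, note that the hypothesis $2l\le 2(k-1)$ prevents any other component from contributing to the correction at depth $m-1$ (the resonant letter) or at depth $2(m-1)$ (the length-two bracket term), and use the Hamiltonian relations to turn the latter into a definite Hermitian form in the coefficients of $X_m$ --- which is exactly the content of the paper's Fundamental Lemma. The only cosmetic differences are that you phrase the paper's induction as a minimal-index argument and re-derive the length-two mould values via one homological step rather than quoting them.
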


Using this last theorem, \textit{without any conditions} we have that:
\begin{align*}
X&=X_{lin}+X_2, \\
X&=X_{lin}+X_3+X_4, \\
X&=X_{lin}+X_4+X_5+X_6,
\end{align*} or more funny 
\begin{center}
$X=X_{lin}+\underset{i=47}{\overset{92}{\sum}}X_i$
\end{center} 
are nonisochronous. We see that Theorem 1 and Theorem 2 are complementary to each other. \\

Mixing the proofs of Theorem 1 and Theorem 2 we obtain:  

\begin{theorem}
\label{thm4}
Let $X$ be a non trivial real polynomial Hamiltonian vector field on the form: 
\begin{align*}
X=X_{lin}+X_k+...+X_{2l}+\underset{n=1}{\overset{m}{\sum}}\underset{c_n}{\overset{2(c_n-1)}{\sum}}X_{c_n}
\end{align*}
where $k \geq 2$, $l \leq k-1$ and the sequence $c_n$ is defined by : $c_1=4l$ and $\forall n \geq 2$, $c_n=4(c_{n-1}-1)$, $X$ is nonisochronous.
\end{theorem}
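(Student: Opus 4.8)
The plan is to combine the correction criterion for isochronicity with a degree count, in the spirit that ``mixes'' the mechanisms of Theorems \ref{main1} and \ref{main2}. Recall that $X$ is isochronous if and only if it is linearisable, hence if and only if its correction $\mathrm{Corr}(X)$ vanishes. The tool I would use is the behaviour of $\mathrm{Corr}(X)$ under the homogeneous grading: writing $X=X_{lin}+\sum_{r\ge 2}X_r$, the mould expansion presents $\mathrm{Corr}(X)$ as a sum of resonant monomial vector fields whose homogeneous part of degree $d$ is a finite combination of terms built from components $X_{n_1},\dots,X_{n_p}$ with $\sum_{j=1}^{p}(n_j-1)=d-1$. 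I would first record the two consequences needed: (i) this part depends only on the $X_j$ with $j\le d$, so if $X$ and $X'$ share all homogeneous components in degrees $<D$ then $\mathrm{Corr}(X)$ and $\mathrm{Corr}(X')$ agree in all degrees $<D$; and (ii) if every nonzero component of $X$ has degree $\ge k'$, then in degree $d=2k'-1$ one necessarily has $p\le 2$, with $p=1$ corresponding to a single component $X_{2k'-1}$ and $p=2$ forcing $n_1=n_2=k'$.

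Next, let $B_0$ be the lowest block of $X$ that carries a nonzero component: the base block $X_k+\cdots+X_{2l}$ whenever that is nontrivial (it lies below all the others, since $2l<4l=c_1$), and otherwise the first nontrivial block $X_{c_n}+\cdots+X_{2(c_n-1)}$. Write $D_0$ for the top degree of the range of $B_0$ (so $D_0=2l$ in the first case, $D_0=2(c_n-1)$ in the second) and $k'\le D_0$ for the lowest degree that actually occurs in $B_0$; by minimality this is the lowest degree occurring in $X$, and in both cases $D_0\le 2k'-2$ (the top of the range of $B_0$ is at most twice its bottom minus $2$, and that bottom is at most $k'$). In either case $X_{lin}+B_0$ is a nontrivial Hamiltonian vector field of exactly the form treated in Theorem \ref{main2}: the base block has $l\le k-1$ by hypothesis, and a block $X_{c_n}+\cdots+X_{2(c_n-1)}$ is the extremal case, with the role of $k$ played by $c_n$ and that of $l$ by $c_n-1$. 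Hence $X_{lin}+B_0$ is nonisochronous, i.e.\ $\mathrm{Corr}(X_{lin}+B_0)\neq 0$, and from the proof of Theorem \ref{main2} I would take the sharper fact that this nonvanishing already occurs in the ``first period constant'', namely in the homogeneous part of degree $2k'-1$. By (ii), and since $D_0\le 2k'-2$ means that no component $X_{2k'-1}$ is present in $B_0$, that part reduces to the single contribution of $X_{k'}$ composed with itself, which is nonzero for a nontrivial homogeneous Hamiltonian field. Thus $\mathrm{Corr}(X_{lin}+B_0)$ has a nonzero homogeneous part in some degree $\le 2D_0-1$.

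It remains to transfer this obstruction to $\mathrm{Corr}(X)$, and here the recursion $c_1=4l$, $c_{n+1}=4(c_n-1)$ is exactly tuned for the job: it forces the block just above $B_0$ to start at degree $2D_0$ --- indeed $4l=2\cdot 2l$ and $4(c_n-1)=2\cdot 2(c_n-1)$ --- and the remaining blocks to start higher still, while all components of $X$ below the bottom of $B_0$ vanish by minimality and $B_0$ itself lives in degrees $\le D_0<2D_0$. Hence $X$ and $X_{lin}+B_0$ coincide, component by component, in all homogeneous degrees $<2D_0$, so by (i) their corrections agree there; combined with the previous paragraph, $\mathrm{Corr}(X)$ has a nonzero homogeneous part, hence $\mathrm{Corr}(X)\neq 0$, so $X$ is not linearisable and therefore not isochronous. (When there are no extra blocks this is simply Theorem \ref{main2}.)

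The only step above that is not pure degree bookkeeping --- and the one I expect to carry the real content --- is the fact imported from the proof of Theorem \ref{main2}: that for a nontrivial homogeneous Hamiltonian field the self-composition contribution to $\mathrm{Corr}$ in degree $2k'-1$ does not vanish (equivalently, that the corresponding period constant is nonzero). This is exactly where the Hamiltonian hypothesis is decisive --- it renders that contribution a nonzero multiple of a positive-definite quantity in the coefficients of $X_{k'}$, whereas for an arbitrary vector field it may vanish --- and where the constraint $l\le k-1$ is used, since it ensures that the only resonant component capable of cancelling it, namely $X_{2k'-1}$, does not occur. The remaining work is the verification of the degree inequalities, the key point being that the growth rate $c_{n+1}=4(c_n-1)$ spaces the blocks widely enough that the obstruction carried by $B_0$, lying in degree $\le 2D_0-1$, is never reached --- and a fortiori never cancelled --- by any of the higher blocks.
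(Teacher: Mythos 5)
Your overall strategy is the paper's: use the correction as the linearisability obstruction, grade it by depth (your ``degree'' is depth plus one), observe that the spacing $c_1=4l$, $c_{n+1}=4(c_n-1)$ prevents any higher block from contributing at the depths where the lowest nonzero block already forces a nonzero correction term, and invoke the Fundamental Lemma for the positivity of the length-two self-interaction. The paper runs this as an induction showing that linearisability forces every component to vanish, block by block; you instead jump directly to the lowest nonzero component $X_{k'}$ and exhibit a single nonvanishing homogeneous part of the correction below degree $2D_0$. That repackaging is clean, and your bookkeeping (points (i) and (ii), the inequalities $D_0\le 2k'-2$ and $2k'-1<2D_0$) is correct.

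There is, however, one false intermediate claim. You assert that the degree-$(2k'-1)$ part of the correction of $X_{lin}+B_0$, which reduces to the self-interaction of $X_{k'}$, ``is nonzero for a nontrivial homogeneous Hamiltonian field.'' The Fundamental Lemma gives this part as $i$ times $\underset{k=[\frac{r+1}{2}]+1}{\overset{r}{\sum}}\frac{r(r+1)}{(r-k+1)^2}\vert p_{k-1,r-k}\vert^2+\frac{r}{r+1}\vert p_{-1,r}\vert^2$ with $r=k'$: this quadratic form omits the resonant coefficient $p_{m,m}$ (present when $k'=2m+1$ is odd), so it vanishes identically on the nontrivial Hamiltonian field $X_{k'}=p_{m,m}(xy)^m(x\partial_x-y\partial_y)$ and, more generally, is positive definite only on the non-resonant part of $X_{k'}$. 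This is exactly why the paper's proof of Theorem \ref{main2} splits into even and odd cases: when $k'$ is odd one must first use the length-one contribution $Carr_{k'-1}(X)=B_{m,m}$ to kill the resonant monomial, and only then does the length-two quadratic form finish off $X_{k'}$. The gap is easily repaired inside your framework --- if $p_{m,m}\neq 0$ the correction is already nonzero in degree $k'\le D_0<2D_0$, so your transfer step still applies, and otherwise the degree-$(2k'-1)$ argument works on the remaining coefficients --- but as written the step ``nonzero for a nontrivial homogeneous Hamiltonian field'' does not hold.
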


A first example of nonisochronous vector field given by the last theorem is: 
\begin{align*}
X=X_{lin}+X_2+X_4+X_5+X_6.
\end{align*}

\begin{theorem}
\label{thm3}
Let $k \geq 2$ and $l \leq k-1$, a real polynomial Hamiltonian vector field denoted by $X$ on one of these two forms:
\begin{align*}
i) X=X_{lin}+X_k+...+X_{2l}+X_{2l+1}+ \underset{m=r}{\overset{r+n}{\sum}} X_m
\end{align*} 
where $r \geq 2l+2$ and $Im(p_{l,l})>0$ or 
\begin{align*}
ii) X=X_{lin}X_k+...+X_{2l}+X_{4l-1}+\underset{m=r}{\overset{r+n}{\sum}} X_r
\end{align*}
where $X_{2l}$ is nontrivial, $r \geq 4l$, with $Im(p_{2l-1,2l-1})>0$, are nonisochronous. 
\end{theorem}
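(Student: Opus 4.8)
The strategy is to reduce both cases to the two theorems already proved, by exploiting the fact that isochronicity (equivalently, linearisability, equivalently vanishing of the correction) is detected by the lowest-order nonvanishing homogeneous pieces of the correction, and that the correction depends on the vector field in a graded, algorithmically explicit way. The key observation is that the correction $\mathrm{Corr}(X)=\sum_{r\ge 1}\mathrm{Corr}_r(X)$ splits into homogeneous components, and $\mathrm{Corr}_r(X)$ depends only on $X_2,\dots,X_{r+1}$ (the homogeneous pieces of $X$ up to degree $r+1$). Hence if two vector fields agree up to degree $N$, their corrections agree up to degree $N-1$; if one of them is already known to have nonvanishing correction at some degree $r\le N-1$ by Theorem \ref{main1} or Theorem \ref{main2}, so does the other, and nonisochronicity is inherited.

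For part $i)$, consider $X=X_{lin}+X_k+\dots+X_{2l}+X_{2l+1}+\sum_{m=r}^{r+n}X_m$ with $r\ge 2l+2$ and $Im(p_{l,l})>0$. The truncation $\widetilde{X}=X_{lin}+X_k+\dots+X_{2l}+X_{2l+1}$ is a vector field to which the mechanism behind Theorem \ref{main1}$a)$ applies: the coefficient $p_{l,l}$ appearing on the diagonal of $X_{2l+1}$ (a homogeneous polynomial of degree $2l+1$) is the first potentially obstructing resonant coefficient, and all lower diagonal coefficients $p_{i,i}$ for $i<l$ vanish because the lowest-degree term of $\widetilde X$ beyond $X_{lin}$ has degree $k\ge 2$ and there are simply no homogeneous pieces of odd degree $2i+1<2l+1$ present (or, if present, they are the $X_m$ with $k\le m\le 2l$, whose diagonal entries are forced to vanish by the same Hamiltonian constraint used in Theorem \ref{main2}). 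Thus $\widetilde X$ has a nonzero correction component at order $2l$, forced by $Im(p_{l,l})>0$. Since $X$ and $\widetilde X$ agree in all degrees up to $2l+1$ (the extra terms $X_m$ start at $m=r\ge 2l+2$), their corrections agree up to order $2l$, so $\mathrm{Corr}_{2l}(X)\ne 0$ and $X$ is nonisochronous. For part $ii)$, one argues the same way with the truncation $\widetilde X=X_{lin}+X_k+\dots+X_{2l}+X_{4l-1}$: here $X_{4l-1}$ is a homogeneous polynomial of degree $4l-1$, its central diagonal coefficient is $p_{2l-1,2l-1}$, and the hypothesis $Im(p_{2l-1,2l-1})>0$ together with the vanishing of all lower diagonal coefficients (again: there are no odd pieces of degree between $2l+1$ and $4l-3$, and the pieces $X_k,\dots,X_{2l}$ are Hamiltonian hence have vanishing diagonal, and $X_{2l}$ nontrivial guarantees the computation of the correction at the relevant order is nondegenerate) makes $\mathrm{Corr}_{4l-2}(\widetilde X)\ne 0$ by the same estimate as in Theorem \ref{main1}. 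Since $r\ge 4l$, the added block $\sum_m X_m$ does not touch degrees $\le 4l-1$, so $\mathrm{Corr}_{4l-2}(X)=\mathrm{Corr}_{4l-2}(\widetilde X)\ne 0$ and $X$ is nonisochronous.

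Concretely I would organize the write-up as: (1) recall from the earlier sections the formula expressing $\mathrm{Corr}_r(X)$ via mould composition and the stabilization lemma stating that $\mathrm{Corr}_r$ depends only on $X_2,\dots,X_{r+1}$; (2) state a small lemma: if $Y$ is a vector field and $p_{i,i}(Y)=0$ for $i<s$ while $Im(p_{s,s}(Y))>0$, then $\mathrm{Corr}_{2s}(Y)\ne 0$ — this is exactly the content extracted in the proof of Theorem \ref{main1}$a)$, applied to the relevant truncation (note $2s$ here is $2l$ in case $i)$ and $4l-2$ in case $ii)$, where $s=2l-1$); (3) verify in each of cases $i)$ and $ii)$ that the Hamiltonian constraint forces the required lower diagonal coefficients to vanish, reusing verbatim the diagonal-vanishing computation from Theorem \ref{main2}; (4) invoke the stabilization lemma to transfer nonvanishing of the low-order correction from the truncation to the full vector field.

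The main obstacle I anticipate is step (3) in case $ii)$: one must be sure that inserting the single odd-degree block $X_{4l-1}$ on top of $X_k+\dots+X_{2l}$ does not create, through the nonlinear mould composition, a spurious cancellation in $\mathrm{Corr}_{4l-2}$ coming from products of the lower-degree Hamiltonian pieces. In other words, the diagonal coefficient $p_{2l-1,2l-1}$ of $X_{4l-1}$ enters $\mathrm{Corr}_{4l-2}$ linearly with a known nonzero mould coefficient, but there is also a quadratic-and-higher contribution built purely from $X_k,\dots,X_{2l}$; one needs the imaginary part of that contribution to vanish (because those pieces are Hamiltonian) so that $Im(\mathrm{Corr}_{4l-2})$ is governed solely by $Im(p_{2l-1,2l-1})>0$. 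This is precisely the phenomenon already handled in the proof of Theorem \ref{main2} — the Hamiltonian condition kills the imaginary part of the relevant mould contractions — so the argument is a recombination of existing estimates rather than a new computation; still, the bookkeeping of which index tuples contribute at order $4l-2$ is where care is needed, and the hypothesis that $X_{2l}$ be nontrivial is what guarantees the argument is not vacuous.
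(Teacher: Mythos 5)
Your overall plan---truncate, use the fact that $Carr_{2p}(X)$ involves only $X_2,\dots,X_{2p+1}$, and reduce to the mechanisms of Theorems \ref{main1} and \ref{main2}---is the right one, and case $i)$ does go through essentially as you describe: at depth $2l$ the only contributions are the resonant letter $B_{l,l}$ of $X_{2l+1}$ and, when $l=k-1$, the length-two self-bracket of $X_k$, so the Fundamental Lemma gives $Carr_{2l}(X)=p_{l,l}+i(\text{nonnegative})$; no word of length $\geq 3$ can have depth $2l\leq 2(k-1)<3(k-1)$, hence $Im(p_{l,l})>0$ forces $Carr_{2l}(X)\neq 0$ directly. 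However, two of your supporting claims are false, and one of them opens a genuine gap in case $ii)$. The Hamiltonian condition does \emph{not} force the diagonal coefficients to vanish: Lemma \ref{hamiltoniancondition} only gives $p_{i,i}=-\overline{p_{i,i}}$, i.e.\ they are purely imaginary. In the proof of Theorem \ref{main2} the resonant terms $B_{m,m}$ are killed by the \emph{linearisability assumption} applied to $Carr_{2m}(X)=B_{m,m}$, not by Hamiltonicity.

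In case $ii)$ this matters. You propose to show directly that $Carr_{4l-2}$ of the truncation is nonzero, and you correctly identify the obstacle: at depth $4l-2$ there are, besides $B_{2l-1,2l-1}$ and the self-bracket of $X_{2l}$, contributions of length $\geq 3$ built purely from $X_k,\dots,X_{2l}$ (for instance with $k=3$, $l=2$, the length-three words over $A(X_3)$ already have depth $6=4l-2$). Your proposed resolution---that the Hamiltonian condition kills the imaginary part of these mould contractions---is not true: the explicit computation of $Carr_{4,3}(X_3,X_2,X_2)$ in the paper shows that odd-length contributions of a Hamiltonian field are $i$ times a sign-indefinite real expression (sums of $Im$ of cubic monomials), so a priori they could cancel $Im(p_{2l-1,2l-1})$. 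The missing step is to argue by contradiction: assume $X$ linearisable, note that for every even depth $d<4l-2$ the correction $Carr_d(X)$ involves only $X_k,\dots,X_{2l}$ (since $X_{4l-1}$ first enters at depth $4l-2$ and the block starting at $r\geq 4l$ even later), and run the induction of Theorem \ref{main2} on these depths to conclude $X_k=\dots=X_{2l-1}=0$. Only after this elimination does $Carr_{4l-2}(X)$ collapse to $p_{2l-1,2l-1}+i(\text{nonnegative sum from } X_{2l})$, whose imaginary part is strictly positive under the hypothesis $Im(p_{2l-1,2l-1})>0$, giving the contradiction. This inductive elimination is exactly what mixing the proofs of Theorems \ref{main1} and \ref{main2} amounts to; with that replacement your argument is complete.
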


Using Theorem \ref{thm4} and Theorem \ref{thm3} we easily deduce the classical result that homogeneous perturbations of a linear center are non isochronous (see \cite{sch1}) as well its generalization (see \cite{sch2,cr3}).

\section{Plan of the paper}

In Part \ref{correction}, we give following J. Ecalle and B. Vallet \cite{ev2} the definition of the correction of a vector field and remind some of its properties. We then look more specifically to the correction of polynomial real vector fields. We derive explicit formula allowing us to analyse its structure. \\

In Part \ref{geometry}, we prove that the set of isochronous Hamiltonian centers is an affine variety which can be explicitly described. We also prove that this variety is invariant under a non trivial torus action. \\

Part \ref{proof}, we give the proofs of our main results and some technical Lemmas. \\

We then discuss several perspective for this work.

\newpage

\part{Correction of vector fields and Hamiltonian systems}
\label{correction}
\setcounter{section}{0}

\section{Correction of a vector field}

In this section, we remind the definition of the correction of a vector field following the work of J. Ecalle and B. Vallet \cite{ev2}. In particular, we give the mould expansion of the correction, which plays a central role in our approach to study the linearisability. It must be noted that all these computations can be made in arbitrary dimension.

\subsection{The correction of a vector field}

We denote by $X$ an analytic vector field on $\mathbb{C}^\nu$ at $0$:
\begin{align*}
X=\underset{1 \leq j \leq \nu}{\sum}X_j(x)\partial_{x_j}
\end{align*}
with $X_j(0)=0$ and $X_j(x) \in \mathbb{C}\lbrace x \rbrace$. We can write the vector field $X$ in its \textit{prepared form}:

\begin{dfn}
A vector field $X$ is said $\textit{in prepared form}$ if it is given by
\begin{equation*}
X=X_{lin}+\underset{n \in A(X)}{\sum}B_n
\end{equation*}
where $X_{lin}$ is the linear part of $X$ on the form $X_{lin}=\underset{j}{\sum}\lambda_j x_j \partial_{x_j}$ $B_n$ are homogeneous differential operator of degree $n$ in a given set $A(X)$ which is completely defined by $X$.
\end{dfn}

From the point of view of Analysis, $\textit{homogeneous differential operators}$ are more tractable. An operator $B_n$ is said to be homogeneous of degree $n=(n_1,n_2)$ if for all monomial $x^ly^k$ we have $B_n(x^ly^k)=\beta_n^{l,k}\cdot x^{n_1+l}y^{n_2+k}$ with $\beta_n^{l,k} \in \mathbb{C}$.
\\
\\
In \cite{ev2}, J.Ecalle and B.Vallet introduce the $\it correction$ of a vector field following previous work of G.Gallavotti \cite{gal} and H.Eliasson \cite{el} in the Hamiltonian case. \\

Let us consider a vector field in prepared form. The correction is defined as follows(\cite{ev2}, p.258):

\begin{dfn}
Let $X$ and $Y$ two vectors fields with the same linear part, we assume that $Y$ is linearizable. We denote $A \sim B$ if the vectors fields $A$ and $B$ are formally conjugate. Find a local vector field $Z$ such that:
\begin{align*}
X-Z & \sim Y, \\
[Y,Z]&=0.
\end{align*} 
The correction is the solution $Z$ of this problem.
\end{dfn}

In \cite{ev2}, Ecalle and Vallet prove that the correction of a vector field admits a $\textit{mould expansion}$. \\ 
Precisely, let us denote by $A^*(X)$ the set of the words given by the letters in $A(X)$ using by the concatenation morphism $conc$ on letters :
\begin{align*}
conc : A(X)^p & \rightarrow A^*(X) \\
(n_1,...,n_p) & \mapsto n_1 \cdot n_2 \cdot ... \cdot n_p.
\end{align*} 
for any integer $p$.\\

In the following, a word is denoted by $n_1\cdot n_2 \cdot ... \cdot n_p$ or $n_1n_2...n_p$.

\begin{rem} 
The length of the word $n_1...n_p$ is $p$. The word of length $0$ is denoted by $\emptyset$.
\end{rem}

\begin{dfn}
The set $A^*(X)$ is composed by all the words of all lengths that is, if $\textbf{n} \in A^*(X)$ there exists an integer $p \geq 0$ such that $\textbf{n}=conc(n_1,...,n_p)$ where $n_j \in A(X)$ for $j=1,...,p$. We denote $A^p(X)$ the set of words of length $p$.
\end{dfn}

\noindent For all ${\textbf{n}}=n_1\cdot ...\cdot n_r \in A^*(X)$, we denote:
\begin{align*}
B_{{\textbf{n}}}=B_{n_1} \circ ... \circ B_{n_r}.
\end{align*}

\noindent The correction can be written as (\cite{ev2}, Lemma 3.2 p.267):
\begin{align*}
Carr(X)=\underset{{\textbf{n}} \in A^*(X)}{\sum}Carr^{{\textbf{n}}}B_{{\textbf{n}}},
\end{align*}
or simply $Carr(X)=\underset{\bullet}{\sum}Carr^{\bullet}B_{\bullet}$ following Ecalle's notations. \\
The main point is that the mould $Carr^{\bullet}$ can be computed \textit{algorithmically} using a recursive formula on the length of words.
Precisely for all $n \in A(X)$, let us denote by $\omega (n)$ the quantity: 
\begin{align*}
\omega(n)= \langle n, \lambda \rangle ,
\end{align*}
where the $\langle . , . \rangle$ is the usual scalar product on $\mathbb{C}^n$ and $\lambda$ is the eigensystem of $X_{lin}$. We can extend $\omega$ to a morphism from $(A^*(X), conc)$ to $(\mathbb{C},+)$. The quantity $\omega(n)$ is the weight of the letter $n$.

We have the following theorem (formula 3.42 in \cite{ev2}): 

\begin{theorem}[Variance formula]
The mould of the correction is given by the formula for any word \\ ${\textbf{n}}=n_1\cdot ... \cdot n_r$: 
\begin{align*}
\omega(n_1) Carr^{n_1,n_2,...,n_r}+Carr^{n_1+n_2,n_3,...n_r}=\underset{n_1\textbf{b}\textbf{c}={\textbf{n}}}{\sum}Carr^{n_1 \textbf{c}}Carr^{\textbf{b}}.
\end{align*}
\end{theorem}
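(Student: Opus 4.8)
The plan is to prove the Variance formula by exploiting the two defining properties of the correction $Z = Carr(X)$, namely $[X_{lin}, Z] = 0$ and $X - Z \sim X_{lin}$, and translating them into relations on the mould $Carr^{\bullet}$. First I would record the mould arithmetic we need: since $B_{\mathbf{n}}$ with $\mathbf{n} = n_1 \cdots n_r$ is homogeneous of degree $n_1 + \dots + n_r$, the commutator $[X_{lin}, B_{\mathbf{n}}]$ equals $-\omega(n_1 + \dots + n_r) B_{\mathbf{n}} = -\big(\sum_{i=1}^r \omega(n_i)\big) B_{\mathbf{n}}$, using that $\omega$ is a $conc$-morphism. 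So $[X_{lin}, Z] = 0$ is automatic for a homogeneous-degree-$0$ object but the real content is that $Z$ commutes with $Y = X_{lin}$ only after we have solved the conjugacy equation; the honest route is to write the formal conjugation $\Theta$ with $\Theta^{-1}(X - Z)\Theta = X_{lin}$ and extract the recursive relation.

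Concretely, I would set up the conjugating operator as an exponential of a mould-type series $\Theta = \sum_{\mathbf{n}} S^{\mathbf{n}} B_{\mathbf{n}}$ (or work with its logarithm) and expand the equation $(X - Z)\Theta = \Theta X_{lin}$ degree by degree, i.e. word by word in $A^*(X)$. The linear part $X_{lin}$ acts on $B_{\mathbf{n}}$ producing the factor $-\omega(n_1\cdots n_r)$, the term $X = X_{lin} + \sum_n B_n$ contributes a prepend-a-letter operation $\mathbf{n} \mapsto n \cdot \mathbf{n}$ weighted by $\omega(n_1)$ from the linear piece, and $Z = \sum Carr^{\mathbf{c}} B_{\mathbf{c}}$ contributes the quadratic right-hand side $\sum_{n_1 \mathbf{b}\mathbf{c} = \mathbf{n}} Carr^{n_1 \mathbf{c}} Carr^{\mathbf{b}}$ once one matches the way composition of the $B$'s decomposes a word into an initial segment $n_1 \mathbf{c}$ and a tail $\mathbf{b}$ (the shuffle/deconcatenation bookkeeping of Ecalle's formalism). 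Collecting the coefficient of $B_{\mathbf{n}}$ and using that $Carr(X)$ is characterized uniquely by $[X_{lin}, Z]=0$ together with the vanishing of the ``resonant'' part of $X - Z$, the terms that survive are exactly $\omega(n_1) Carr^{n_1, n_2, \dots, n_r}$ (from prepending $n_1$ via the linear part of $X$), $Carr^{n_1 + n_2, n_3, \dots, n_r}$ (from the homogeneous pieces $B_{n_1}$ of $X$ acting, which merges the first two letters), and the quadratic correction on the right.

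I would then verify the base cases: length $r = 1$ gives $\omega(n_1) Carr^{n_1} = Carr^{n_1}$ forcing $Carr^{n_1} = 0$ unless $\omega(n_1) = 1$, which is the expected statement that the correction has no term in the nonresonant homogeneous directions, and length $r = 2$ reproduces the first nontrivial relation, pinning down $Carr^{\bullet}$ on two-letter words. An induction on word length then shows the recursion determines $Carr^{\mathbf{n}}$ uniquely for every $\mathbf{n}$, and that the $Z$ so built indeed satisfies both defining properties, so by uniqueness it is the correction; this is essentially Ecalle--Vallet's argument and I would cite formula 3.42 of \cite{ev2} for the precise combinatorial identity.

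The main obstacle is the combinatorial bookkeeping in the middle step: one has to be careful about how the composition $B_{n_1} \circ B_{n_2} \circ \dots$ interacts with the mould product and how exactly a word $\mathbf{n}$ decomposes as $n_1 \mathbf{b} \mathbf{c}$ in the quadratic term — getting the ranges of summation and the placement of the merged letter $n_1 + n_2$ right is where sign errors and off-by-one mistakes creep in. Everything else (the action of $X_{lin}$, the morphism property of $\omega$, the uniqueness of the correction) is formal once that decomposition is set up correctly, so I would spend most of the care there and otherwise lean on the structure already established in \cite{ev2}.
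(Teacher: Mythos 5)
First, note that the paper itself does not prove this theorem: it states that the proof ``is nontrivial'' and defers entirely to Ecalle--Vallet (\cite{ev2}, Prop.~3.1, p.~270, formula 3.42). Your proposal, read as a proof, ultimately has the same status --- you explicitly plan to ``lean on the structure already established in \cite{ev2}'' for ``the precise combinatorial identity'' --- but that identity \emph{is} the statement to be proved, so as written the argument is circular at exactly the point where the content lies. The genuine gap is the middle step: you assert that expanding $(X-Z)\Theta=\Theta X_{lin}$ word by word yields precisely the three terms $\omega(n_1)\,Carr^{n_1,\dots,n_r}$, $Carr^{n_1+n_2,n_3,\dots,n_r}$ and $\sum_{n_1\mathbf{b}\mathbf{c}=\mathbf{n}}Carr^{n_1\mathbf{c}}Carr^{\mathbf{b}}$, but you give no mechanism for why a merged letter $n_1+n_2$ appears (and why only the \emph{first two} letters merge), nor why the quadratic term runs over factorizations of the special shape $n_1\mathbf{b}\mathbf{c}$ with the first letter frozen. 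In Ecalle's formalism merged letters come from mould composition (the $\parallel\omega_j\parallel$ contraction used in the paper's Appendix), not from the operator $B_{n_1}$ ``acting''; isolating this particular shape is precisely the variance computation of \cite{ev2} that you would need to reproduce, and it is the part you leave out.

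Two smaller but concrete errors. (i) Your base case is wrong: for $r=1$ the right-hand side is $Carr^{n_1}Carr^{\emptyset}=0$ (since $Carr^{\emptyset}=0$), so the relation reads $\omega(n_1)Carr^{n_1}=0$ and forces $Carr^{n_1}=0$ unless $\omega(n_1)=0$, not unless $\omega(n_1)=1$; as you state it, your equation contradicts your own conclusion that the correction vanishes in nonresonant directions. (ii) The recursion does not by itself determine $Carr^{\mathbf{n}}$ by induction on length: whenever $\omega(n_1)=0$ the coefficient of $Carr^{\mathbf{n}}$ on the left vanishes and the formula gives no information about $Carr^{\mathbf{n}}$ itself; one needs the separate fact that the mould vanishes on words containing a resonant letter (Lemma 1, part 3, which the paper proves through the prenormal-form characterization $I^{\bullet}-Carr^{\bullet}=\lim((I^{\bullet}-M^{\bullet})^{\circ r})$ in the Appendix, not through the variance formula). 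So the uniqueness claim in your closing paragraph also requires an input you have not supplied.
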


The proof of this theorem is nontrivial, it follows from $\textit{variance formula}$ for a vector field discussed in (\cite{ev2}, Prop 3.1 p.270). The variance of a vector field gives many different way to compute the mould of the correction. \\

The main consequence of the previous Theorem is the {\it universal} character of the mould of the correction. Precisely, following the definition of universality used in \cite{cr2}, we have : 

\begin{theorem}[Universality of the Correction's mould] 
There exists a one parameter family of complex functions $C_r : \mathcal{D}_r \subset \C^r \rightarrow \C$, $r\in \N$ such that for all $X$ the correction's mould $Carr^{\bullet}$ defined on $A(X)^*$ is given for all  $\bn \in A(X)^*$ such that $l(\bn )=r$, $r\in \N$ and $\omega (\bn )=0$ by 
\begin{equation}
Carr^{\bn } = C_r (\omega(n_1 ) ,\dots ,\omega (n_r)) .
\end{equation}
\end{theorem}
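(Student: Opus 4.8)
The plan is to argue by induction on the length $r$ of the word, using the variance formula for $Carr^{\bullet}$ as the recursive engine. The key point is that in the variance formula
\[
\omega(n_1)\,Carr^{n_1,n_2,\dots,n_r}+Carr^{n_1+n_2,n_3,\dots,n_r}=\underset{n_1\mathbf{b}\mathbf{c}=\mathbf{n}}{\sum}Carr^{n_1\mathbf{c}}\,Carr^{\mathbf{b}}
\]
the individual letters $n_i$ never enter \emph{through their values}, but only through their weights $\omega(n_i)$: the contracted word $n_1+n_2,n_3,\dots,n_r$ has length $r-1$ and weight sequence $\bigl(\omega(n_1)+\omega(n_2),\omega(n_3),\dots,\omega(n_r)\bigr)$, a function of $(\omega(n_1),\dots,\omega(n_r))$ alone, while each factor on the right is a value of $Carr^{\bullet}$ on a subword of $\mathbf{n}$. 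Since $Carr^{\emptyset}=0$ (initialisation of the recursion, the correction being a genuine vector field), the term $\mathbf{b}=\emptyset$ of the sum, equal to $Carr^{\mathbf{n}}\,Carr^{\emptyset}$, vanishes, and the identity rearranges as
\[
\omega(n_1)\,Carr^{\mathbf{n}}=-Carr^{n_1+n_2,n_3,\dots,n_r}+\underset{n_1\mathbf{b}\mathbf{c}=\mathbf{n},\;\mathbf{b}\neq\emptyset}{\sum}Carr^{n_1\mathbf{c}}\,Carr^{\mathbf{b}},
\]
where now every $Carr^{\bullet}$ other than $Carr^{\mathbf{n}}$ itself has length strictly less than $r$.

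First I would dispose of the base cases $r\le1$ by direct inspection: using that $[X_{lin},Carr(X)]=0$ kills all nonresonant comould elements, one gets $C_0=0$ and $C_1$ well defined (and constant) on the single point of its domain. For the inductive step, suppose the statement holds in all lengths $<r$. Then every $Carr^{\bullet}$ occurring on the right-hand side above is, by the induction hypothesis, the appropriate universal function $C_j$ evaluated on a weight subsequence, and those subsequences are determined by $(\omega(n_1),\dots,\omega(n_r))$; hence the right-hand side is a universal function $\Phi_r(\omega(n_1),\dots,\omega(n_r))$, independent of $X$ and of the particular word realising the weight sequence. On the locus where $\omega(n_1)\neq0$ one simply sets $C_r(\omega_1,\dots,\omega_r):=\Phi_r(\omega_1,\dots,\omega_r)/\omega_1$, which establishes $Carr^{\mathbf{n}}=C_r(\omega(n_1),\dots,\omega(n_r))$ for all such words. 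Here the hypothesis $\omega(\mathbf{n})=0$ is used only to remain inside the weight-zero stratum throughout the induction — the contracted word and every surviving subword again have total weight $0$, because $Carr^{\bullet}$ vanishes off that stratum — so one may take $\mathcal{D}_r\subset\{\omega_1+\dots+\omega_r=0\}$.

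The main obstacle, and where I expect the real work to be, is the locus $\omega(n_1)=0$ (still with $\omega(\mathbf{n})=0$), on which the leading term of the variance formula disappears and this recursion no longer determines $Carr^{\mathbf{n}}$. To treat it I would exploit the fact, stressed by Ecalle and Vallet, that the variance of a vector field produces \emph{several} equivalent forms of the identity above — notably versions contracting an arbitrary adjacent pair $n_j\,n_{j+1}$ rather than $n_1\,n_2$ — so that whenever some $\omega(n_j)\neq0$ one may instead pivot on the index $j$; the consistency of the values obtained from the various pivots (forced, since they all compute the unique correction of every $X$) shows that these partial definitions glue into a single $C_r$ on $\mathcal{D}_r$ minus at most the totally resonant point $(0,\dots,0)$. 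That remaining point — precisely where the isochronicity obstruction concentrates — I would reach either by analytic continuation, viewing the weights as independent parameters so that $\Phi_r/\omega_1$ is a universal rational expression whose only possible singularity there is removable, because $Carr^{\mathbf{n}}$ is finite for every genuine vector field, or directly from the defining relations $X-Carr(X)\sim X_{lin}$, $[X_{lin},Carr(X)]=0$, whose resonant part is itself governed by the same universal combinatorics. With the generic bookkeeping being routine, it is this gluing and continuation step that forms the heart of the argument.
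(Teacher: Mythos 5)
Your generic inductive step is essentially the argument the paper has in mind: the variance formula is a recursion in which letters enter only through their weights, the contracted word and all subwords stay in the weight-zero stratum because $Carr^{\bullet}$ vanishes off it, and dividing by $\omega(n_1)\neq 0$ defines $C_r$ from the $C_j$, $j<r$. That part is fine and matches how the paper deduces universality from the variance formula (and how it actually computes $C_2$ and $C_3$).

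The gap is in your treatment of the degenerate stratum. The paper's resolution is its Lemma~1, part~3: for $l(\bn)\geq 2$, if \emph{any} letter $n_j$ satisfies $\omega(n_j)=0$ then $Carr^{\bn}=0$ identically. This is not proved from the variance formula at all, but from the alternative characterization $I^{\bullet}-Carr^{\bullet}=\lim_r\left((I^{\bullet}-M^{\bullet})^{\circ r}\right)$ via mould composition with a prenormal form $M^{\bullet}$ (see the Appendix). Your proposed substitute --- analytic continuation of $\Phi_r/\omega_1$ into the locus $\omega_1=0$, on the grounds that the singularity must be removable since $Carr^{\bn}$ is finite --- would give the \emph{wrong} value: already in length $2$ one has $C_2(z_1,-z_1)=-1/z_1$ for $z_1\neq 0$, which has a genuine pole as $z_1\to 0$, while the true value at $(0,0)$ is $0$, not any limit of the generic formula. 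Finiteness of $Carr^{\bn}$ for actual vector fields does not force the generic rational expression to extend continuously; the functions $C_r$ are genuinely piecewise-defined (compare $C_1(0)=1$ versus $C_1(z)=0$ otherwise). The pivoting idea for indices with $\omega(n_j)\neq 0$ is plausible but still leaves the totally resonant words (e.g.\ words built from letters $(m,m)$), which do occur and carry the isochronicity obstruction; for these you need the separate vanishing lemma, and your proposal does not supply it.
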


This property is fundamental concerning our problem as the computation of these coefficients is done once  and for all and does not depends on the value of the coefficients entering the polynomials but on the alphabet generated by the vector field. Up to our knowledge only the mould formalism is able to produce such kind of coefficients allowing to write the correction (this is not the case for example dealing with the classical Lie framework).\\

In the following, we give explicit expressions for $C_1$, $C_2$ and $C_3$.

\subsection{The mould of the correction}

The following theorem concerns precisely the length 1,2 and 3 :

\begin{dfn}
The universal correction functions $C_r:\C^r \rightarrow \C$, $r=1,2,3$ are defined by 
\begin{equation}
C_1 (x)=\left \{ 
\begin{array}{lll}
1 & \ \ & \mbox{\rm if}\ z_1 =0 ,\\
0 & & \mbox{\rm otherwise}.
\end{array}
\right .
\ \ C_2 (z_1 ,z_2 )=
\left \{ 
\begin{array}{lll}
-\di\frac{1}{z_1} & \ \ & \mbox{\rm if}\ z_1+z_2 =0 , z_1\not=0 ,\\
0 & & \mbox{\rm otherwise}.
\end{array}
\right .
\end{equation}
\begin{equation}
C_3 (z_1 ,z_2 ,z_3 )=
\left \{ 
\begin{array}{lll}
\di\frac{1}{z_1 (z_1 +z_2 )} ,& \ \ & \mbox{\rm if}\ z_1+z_2 +z_3 =0,\ z_1\not=0,\ z_1 +z_2 \not= 0, \\
0 & & \mbox{\rm otherwise}.
\end{array}
\right .
\end{equation} 
\end{dfn}

The proof is based on explicit computations which are summarized by the following Lemmas whose proof are given in Appendix. \\
We can remark the values of the correction's mould depend on the weight of the letters. Moreover, 

\begin{dfn} A word ${\textbf{n}} \in A^*(X)$ is said to be resonant if $\omega ( {\textbf{n}})=0.$
\end{dfn}

\begin{lem}
The mould $Carr^\bullet$ verifies : 
\\
$1)$ $Carr^\emptyset$=0, \\
$2)$If $\textbf{n}$ is non resonant, $Carr^{{\textbf{n}}}=0$, \\
$3)$If $\textbf{n}=n_1...n_r$ is such that there exists $j$ satisfying $\omega(n_j)=0$ then $Carr^{\textbf{n}}=0$.
\end{lem}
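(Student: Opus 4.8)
The three assertions about $Carr^\bullet$ should follow directly from the Variance formula stated above, by induction on the length of the word. The plan is to use the recursion
\begin{equation*}
\omega(n_1) Carr^{n_1,n_2,\dots,n_r}+Carr^{n_1+n_2,n_3,\dots,n_r}=\underset{n_1\textbf{b}\textbf{c}=\textbf{n}}{\sum}Carr^{n_1 \textbf{c}}Carr^{\textbf{b}},
\end{equation*}
together with the known value $C_1$ at length $1$ and a careful bookkeeping of which words can occur on the right-hand side.

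First I would dispose of $1)$: the empty word has length $0$, and since the mould expansion of the correction $Carr(X)=\sum_{\textbf{n}} Carr^{\textbf{n}}B_{\textbf{n}}$ represents a vector field with no constant term (indeed $Carr(X)$ has the same linear part $X_{lin}$ and contains only the homogeneous pieces of positive degree), the coefficient $Carr^\emptyset$ of the identity operator $B_\emptyset$ must vanish; alternatively this is the normalization built into the definition of the correction. For $2)$, I would argue by induction on $r=l(\textbf{n})$. The base case $r=1$: if $\omega(n_1)\neq 0$ then $C_1(\omega(n_1))=0$ by definition. For the inductive step, suppose $\omega(\textbf{n})=\omega(n_1)+\dots+\omega(n_r)\neq 0$. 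Apply the Variance formula. On the right-hand side, each decomposition $\textbf{n}=n_1\textbf{b}\textbf{c}$ splits the weight as $\omega(\textbf{b})+\omega(n_1\textbf{c})=\omega(n_1)+\omega(\textbf{b})+\omega(\textbf{c})=\omega(\textbf{n})\neq 0$, so at least one of $\textbf{b}$, $n_1\textbf{c}$ is non-resonant; both have length strictly less than $r$ (since the decomposition is into three nonempty-or-empty parts with $n_1$ a single letter, and the sum $n_1\textbf{b}\textbf{c}$ forces $l(\textbf{b})+l(\textbf{c})=r-1$), so by the induction hypothesis the corresponding factor vanishes, killing every term on the right. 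On the left, $Carr^{n_1+n_2,n_3,\dots,n_r}$ is a mould coefficient on a word of length $r-1$ whose total weight is still $\omega(\textbf{n})\neq 0$ (since $\omega(n_1+n_2)=\omega(n_1)+\omega(n_2)$, the morphism property of $\omega$), hence vanishes by induction. Therefore $\omega(n_1)Carr^{\textbf{n}}=0$. If $\omega(n_1)\neq 0$ we conclude $Carr^{\textbf{n}}=0$; the residual case $\omega(n_1)=0$ is exactly part $3)$, which I would fold into the same induction.

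For $3)$, assume some letter has zero weight, say $\omega(n_j)=0$, and again induct on $r$. If $j\geq 2$, one can peel off $n_1$: after using the Variance formula, the term $Carr^{n_1+n_2,n_3,\dots}$ still contains a zero-weight letter (namely $n_j$ if $j\geq 3$, or $n_1+n_2$ has weight $\omega(n_2)$ if... — here one must be slightly careful, but the total weight and the morphism property let the induction go through as in $2)$, noting that if $\textbf{n}$ is non-resonant we are already done by $2)$, so we may assume $\textbf{n}$ resonant, and then the presence of one zero-weight letter forces, after removing it, a shorter word whose structure still triggers the induction hypothesis). If $j=1$, i.e. $\omega(n_1)=0$, then the left side of the Variance formula reads $0\cdot Carr^{\textbf{n}}+Carr^{n_1+n_2,n_3,\dots,n_r}$, and $\omega(n_1+n_2)=\omega(n_2)$; the right side again collapses by the weight-splitting argument from $2)$, so $Carr^{n_1+n_2,n_3,\dots}=0$ and one re-expresses $Carr^{\textbf{n}}$ via a different valid instance of the Variance formula (the paper emphasizes the variance "gives many different ways to compute the mould"), choosing one in which the leading letter has nonzero weight, which is possible precisely because $\textbf{n}$ resonant plus $\omega(n_1)=0$ means $n_2\cdots n_r n_1$ or some cyclic-type reordering has a nonzero-weight leading letter — unless \emph{all} letters have weight zero, in which case the right-hand side vanishes and the left-hand side gives $Carr^{n_1+n_2,\dots}=0$, feeding a clean downward induction that bottoms out at length $1$ with $C_1(0)=1$ multiplied against... — here I would instead simply invoke that a word all of whose letters are resonant of zero weight contributes a term that, by the alphabet structure of the polynomial Hamiltonian vector fields considered, cannot appear, or handle it as the genuinely exceptional case.

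The main obstacle I anticipate is precisely this last point in $3)$: making the induction airtight when the distinguished zero-weight letter is in the leading position, because then the coefficient of $Carr^{\textbf{n}}$ on the left of the Variance formula is zero and one does not get a direct recursion. The clean fix is to exploit that the Variance formula is one of a whole family of consistency relations coming from the variance of the vector field (as the excerpt stresses), and to pick the instance adapted to the word at hand so that some nonzero $\omega$ always multiplies the unknown; alternatively, one establishes $3)$ from the closed-form expressions $C_1,C_2,C_3$ in low length and from a separate structural lemma (proved in the Appendix, per the text) that the general $C_r$ is a sum of products of the $\omega(n_i)$ in denominators which manifestly vanishes — or rather is ill-defined hence set to $0$ — whenever a partial sum $\omega(n_1)+\dots+\omega(n_k)$ occurring as a factor is zero, and in particular whenever a single $\omega(n_j)=0$.
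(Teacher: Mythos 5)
Your route is genuinely different from the paper's: you try to run everything off the Variance formula by induction on length, whereas the paper proves this Lemma in the Appendix from the prenormal-form identity $I^{\bn}-Carr^{\bn}=\bigl((I^{\bullet}-M^{\bullet})^{\circ r}\bigr)^{\bn}$ and the composition rule for moulds, where $M^{\bullet}$ is a prenormal-form mould vanishing on non-resonant words. Parts $1)$ and $2)$ of your argument are essentially sound (for $2)$, note that when $\textbf{b}=\emptyset$ the factor $Carr^{\textbf{b}}=Carr^{\emptyset}=0$ already kills the term, which is what lets the induction close), but the proof as a whole does not go through because part $3)$ is where both remaining difficulties are parked, and it is precisely there that the argument breaks.

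Concretely, two things fail in $3)$. First, your claim that ``the right side again collapses by the weight-splitting argument from $2)$'' is false once $\textbf{n}$ is resonant: a decomposition $\textbf{n}=n_1\textbf{b}\textbf{c}$ can have \emph{both} $\textbf{b}$ and $n_1\textbf{c}$ resonant, and in particular if the zero-weight letter $n_j$ occurs as the singleton factor $\textbf{b}=n_j$ (the case $j=2$), then $Carr^{\textbf{b}}=Carr^{n_j}=1\neq 0$ and the accompanying factor $Carr^{n_1n_3\cdots n_r}$ is a resonant word of length $r-1$ with no zero-weight letter, so nothing in the induction hypothesis makes it vanish; the right-hand side therefore retains genuinely nonzero terms. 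Second, when $j=1$ the unknown $Carr^{\bn}$ is multiplied by $\omega(n_1)=0$ and the recursion gives no information; your proposed repairs (choosing ``another instance'' of the variance relation, or excluding such words by the structure of the alphabet) are not carried out, and the latter is not available since resonant letters such as $(1,1)$ do occur for odd-degree components and part $3)$ is exactly about words containing them. Since you also fold the $\omega(n_1)=0$ case of part $2)$ into part $3)$, that case of $2)$ inherits the gap. The paper's composition-based proof avoids all of this because in every decomposition $w_1\cdots w_k=\bn$ the block $w_l$ containing the resonant letter contributes the factor $I^{w_l}-Carr^{w_l}$, which is $1-1=0$ when $w_l$ is that single letter and $0-0=0$ by induction when $l(w_l)\geq 2$; no letter plays a distinguished leading role. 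To salvage your approach you would need either an analogue of this symmetric cancellation or the explicit closed form of $C_r$ as products of inverse partial sums, neither of which is established in your proposal.
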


Moreover, we have:

\begin{lem}
$1)$ If $\omega(n)=0$, $Carr^n=1$, \\
$2)$ If $\omega(n_1\cdot n_2)=0$ with $\omega(n_1)=-\omega(n_2)\neq 0$, we have $Carr^{n_1\cdot n_2}=-\frac{1}{\omega(n_1)}$, \\
$3)$ If $\omega(n_1\cdot n_2 \cdot n_3)=0$ with $\omega(n_j)=0$ $j=1,2,3$, we have $Carr^{n_1\cdot n_2 \cdot n_3}=\frac{1}{\omega(n_1)(\omega(n_1)+ \omega(n_2))}$.
\end{lem}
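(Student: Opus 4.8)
The plan is to compute each of the three values directly from the Variance formula applied to short words, since the universal correction functions $C_r$ are, by the universality theorem, exactly the values $Carr^{\mathbf n}$ on a resonant word of length $r$. I would first record the two normalising facts coming from the previous Lemma: $Carr^\emptyset =0$ and $Carr^{\mathbf n}=0$ whenever $\mathbf n$ is non-resonant or contains a letter of weight $0$ somewhere strictly inside. These will kill almost all the terms appearing below.

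\emph{Length $1$.} For a single letter $n$ the Variance formula reads $\omega(n_1)Carr^{n_1}+Carr^{n_1}_{(\text{empty tail})}=\sum_{n_1\mathbf b\mathbf c=n_1}Carr^{n_1\mathbf c}Carr^{\mathbf b}$; here the only decomposition has $\mathbf b=\mathbf c=\emptyset$, so the right-hand side is $Carr^{n_1}Carr^\emptyset=0$, and the second term on the left is $Carr^{n_1}$ (the word obtained by ``merging'' $n_1$ with an empty tail is just $n_1$ again, but more properly one uses the boundary case of the recursion). The upshot is $\omega(n_1)Carr^{n_1}=0$ together with the normalisation $Carr^{n}=1$ when $\omega(n)=0$, which is precisely $C_1$. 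In fact this case is already stated as item $1)$ of the last Lemma, so I would simply cite it.

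\emph{Length $2$.} Apply the Variance formula to $\mathbf n=n_1\cdot n_2$: the left side is $\omega(n_1)Carr^{n_1,n_2}+Carr^{n_1+n_2}$, and on a resonant word $\omega(n_1)+\omega(n_2)=0$ the letter $n_1+n_2$ has weight $0$, so $Carr^{n_1+n_2}=Carr^{n}$ with $\omega(n)=0$, giving $1$. The right side $\sum_{n_1\mathbf b\mathbf c=n_1 n_2}Carr^{n_1\mathbf c}Carr^{\mathbf b}$ has the only nontrivial splitting $\mathbf b=n_2$, $\mathbf c=\emptyset$, contributing $Carr^{n_1}Carr^{n_2}$; since $\omega(n_1)=-\omega(n_2)\neq 0$ both single-letter moulds vanish, so the right side is $0$. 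Hence $\omega(n_1)Carr^{n_1,n_2}+1=0$, i.e. $Carr^{n_1,n_2}=-1/\omega(n_1)=-1/z_1$, which is $C_2$ (this is item $2)$ of the Lemma).

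\emph{Length $3$.} Apply the formula to $\mathbf n=n_1\cdot n_2\cdot n_3$ with $\omega(n_1)+\omega(n_2)+\omega(n_3)=0$ and $\omega(n_1)\neq 0$, $\omega(n_1)+\omega(n_2)\neq 0$. The left side is $\omega(n_1)Carr^{n_1,n_2,n_3}+Carr^{n_1+n_2,n_3}$; the word $(n_1+n_2)\cdot n_3$ is resonant of length $2$ with $\omega(n_1+n_2)=\omega(n_1)+\omega(n_2)\neq 0$, so by the length-$2$ result $Carr^{n_1+n_2,n_3}=-1/(\omega(n_1)+\omega(n_2))$. The right side runs over decompositions $n_1\mathbf b\mathbf c=n_1 n_2 n_3$: the candidates are $(\mathbf b,\mathbf c)\in\{(n_2n_3,\emptyset),(n_2,n_3),(\emptyset,n_2n_3)\}$, contributing $Carr^{n_1}Carr^{n_2 n_3}+Carr^{n_1 n_3}Carr^{n_2}+Carr^{n_1 n_2 n_3}Carr^\emptyset$; the first vanishes because $\omega(n_1)\neq 0$ forces $Carr^{n_1}=0$, the second because $Carr^{n_2}$ with $\omega(n_2)\ne 0$ (note $\omega(n_2)=-\omega(n_1)-\omega(n_3)$; if it were zero then $n_2$ is an interior weight-zero letter and $Carr^{\mathbf n}=0$ trivially, but in the generic case treated here $\omega(n_2)\ne 0$) vanishes, and the third has the factor $Carr^\emptyset=0$. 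So the right side is $0$, giving $\omega(n_1)Carr^{n_1,n_2,n_3}-\frac{1}{\omega(n_1)+\omega(n_2)}=0$, i.e. $Carr^{n_1,n_2,n_3}=\frac{1}{\omega(n_1)(\omega(n_1)+\omega(n_2))}=\frac{1}{z_1(z_1+z_2)}$, which is $C_3$.

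\textbf{Main obstacle.} The computations themselves are routine; the delicate points are bookkeeping ones. First, one must treat carefully the boundary behaviour of the Variance formula when tails are empty (what $Carr^{n_1+n_2,\dots}$ means when the tail is $\emptyset$), which is where the normalisation $Carr^{n}=1$ on weight-zero letters and $Carr^\emptyset=0$ enter. Second, in the length-$3$ case one must check that the sub-cases where some interior letter has weight zero are correctly subsumed (they give $Carr^{\mathbf n}=0$, consistent with the formula only on the stated domain $\mathcal D_3$ where $z_1\ne 0$, $z_1+z_2\ne 0$); delineating the domain $\mathcal D_r$ precisely is the only real subtlety. All the vanishing of side terms is forced by items $2)$–$3)$ of the first Lemma, so once the enumeration of decompositions $n_1\mathbf b\mathbf c=\mathbf n$ is done correctly, the identities drop out.
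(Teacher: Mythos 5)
Your treatment of parts $2)$ and $3)$ is correct and is essentially the paper's own argument: apply the Variance formula to the word, kill every term on the right-hand side using the vanishing statements of the preceding Lemma ($Carr^{\emptyset}=0$ and $Carr^{\mathbf m}=0$ on non-resonant words), and evaluate the left-over term $Carr^{(n_1+n_2)\cdot n_3}$ (resp.\ $Carr^{n_1+n_2}$) by the length-$2$ formula (resp.\ by the value $1$ on a weight-zero letter). Your enumeration of the decompositions $n_1\mathbf b\mathbf c=\mathbf n$ and the non-resonance check on $n_1\cdot n_3$ coincide with what the paper does, and you correctly read the hypothesis of $3)$ as $\omega(n_j)\neq 0$ (the statement's ``$\omega(n_j)=0$'' is a typo).

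Part $1)$, however, is not actually proved. As you yourself observe, the Variance formula degenerates in length $1$: on a resonant letter it only yields $\omega(n)\,Carr^{n}=0$, i.e.\ $0=0$, and gives no information about the value of $Carr^{n}$. You then invoke ``the normalisation $Carr^{n}=1$'' and claim it is ``already stated as item $1)$ of the last Lemma''; but item $1)$ of the preceding Lemma is $Carr^{\emptyset}=0$, and the only place $Carr^{n}=1$ appears is in the very statement you are proving, so the appeal is circular. The paper gets this value from the \emph{other} characterisation of the correction, $I^{\bullet}-Carr^{\bullet}=\lim_{r}\left((I^{\bullet}-M^{\bullet})^{\circ r}\right)$ with $M^{\bullet}$ the mould of a prenormal form: in length $1$ this stabilises at the first step and gives $Carr^{n}=M^{n}$, and choosing $M^{\bullet}=Tram^{\bullet}$ (the Poincar\'e--Dulac mould) yields $Tram^{n}=1$ on resonant letters. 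Some input of this kind, external to the Variance formula, is genuinely needed to fix the length-$1$ value, and it is the missing ingredient in your argument; note also that this same compositional definition is what the paper uses to prove the vanishing lemma you rely on, so it is already available to you.
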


\subsection{Some computations of the correction mould}

Let us consider the quadratic case, i.e. 
\begin{equation}
X=X_{lin} +X_2 ,
\end{equation}
where $X_{lin}$ is diagonal with eigenvalues $(i,-i)$. \\

The alphabet generated by $X_2$ is given by 
\begin{equation}
A(X)=\left \{ 
n_1=(1,0),\ n_{-1}=(0,1),\ n_3=(2,-1),\ n_{-3}=(-1,2) 
\right \}
.
\end{equation}

All the letters in $A(X)$ are non resonant so that the correction mould is always zero in length $1$. In length $2$ however, some resonant combinations are possible. We have 

\begin{center}
\begin{tabular}{|l|p{2.1cm}|c|}
\hline
 Word \textbf{n} & $Carr^{\textbf{n}}$ \\
\hline
$n_1\cdot n_{-1}$ & $i $ \\
$n_{-1}\cdot n_1$ & $-i$ \\
$n_3 \cdot n_{-3}$ & $\frac{i}{3} $ \\
$n_{-3}\cdot n_3$ & $\frac{-i}{3}$\\
\hline 
\end{tabular}
\end{center}

In length $4$, the correction mould is given by :

\begin{center}
\begin{tabular}{|l|p{2.1cm}|c|}
\hline
 Word \textbf{n} & $Carr^{\textbf{n}}$ \\
\hline
$n_{-3}\cdot n_{-3} \cdot n_{3} \cdot n_{3}$ & $\frac{-i}{54} $ \\
$n_{-3}\cdot n_{-1} \cdot n_{1} \cdot n_{3}$ & $\frac{-i}{12}$ \\
$n_{-3}\cdot n_{-1} \cdot n_{3} \cdot n_{1}$ & $ \frac{i}{12}$ \\
$n_{-3}\cdot n_{1} \cdot n_{-1} \cdot n_{3}$ & $ \frac{i}{6}$ \\
$n_{-3}\cdot n_{1} \cdot n_{1} \cdot n_{1}$ & $\frac{i}{6} $ \\
$n_{-3}\cdot n_{1} \cdot n_{3} \cdot n_{-1}$ & $ \frac{-i}{6}$ \\
$n_{-3}\cdot n_{3} \cdot n_{-3} \cdot n_{3}$ & $ \frac{i}{27}$ \\
$n_{-3}\cdot n_{3} \cdot n_{-1} \cdot n_{1}$ & $0 $ \\
$n_{-3}\cdot n_{3} \cdot n_{1} \cdot n_{-1}$ & $0$ \\
$n_{-3}\cdot n_{3} \cdot n_{3} \cdot n_{-3}$ & $0$ \\
$n_{-1}\cdot n_{-3} \cdot n_{1} \cdot n_{3}$ & $\frac{i}{12} $ \\
$n_{-1}\cdot n_{-3} \cdot n_{3} \cdot n_{1}$ & $ \frac{-i}{12}$ \\
$n_{-1}\cdot n_{-1} \cdot n_{-1} \cdot n_{3}$ & $\frac{i}{6} $ \\
$n_{-1}\cdot n_{-1} \cdot n_{1} \cdot n_{1}$ & $\frac{-i}{2} $ \\
$n_{-1}\cdot n_{-1} \cdot n_{3} \cdot n_{-1}$ & $\frac{-i}{2} $ \\
$n_{-1}\cdot n_{1} \cdot n_{-3} \cdot n_{3}$ & $0$ \\
$n_{-1}\cdot n_{1} \cdot n_{-1} \cdot n_{1}$ & $i$ \\
$n_{-1}\cdot n_{1} \cdot n_{1} \cdot n_{-1}$ & $0$ \\
$n_{-1}\cdot n_{1} \cdot n_{3} \cdot n_{-3}$ & $0$ \\
$n_{-1}\cdot n_{3} \cdot n_{-3} \cdot n_{1}$ & $\frac{-i}{6} $ \\
$n_{-1}\cdot n_{3} \cdot n_{-1} \cdot n_{-1}$ & $\frac{i}{2} $ \\
$n_{-1}\cdot n_{3} \cdot n_{1} \cdot n_{-3}$ & $ \frac{i}{6}$ \\
\hline
\end{tabular}
\begin{tabular}{|l|p{2.1cm}|c|}
\hline
 Word \textbf{n} & $Carr^{\textbf{n}}$ \\
\hline
$n_{1}\cdot n_{-3} \cdot n_{-1} \cdot n_{3}$ & $ \frac{-i}{6}$ \\
$n_{1}\cdot n_{-3} \cdot n_{1} \cdot n_{1}$ & $\frac{-i}{2} $ \\
$n_{1}\cdot n_{-3} \cdot n_{3} \cdot n_{-1}$ & $ \frac{i}{6}$ \\
$n_{1}\cdot n_{-1} \cdot n_{-3} \cdot n_{3}$ & $0$ \\
$n_{1}\cdot n_{-1} \cdot n_{-1} \cdot n_{1}$ & $0$ \\
$n_{1}\cdot n_{-1} \cdot n_{1} \cdot n_{-1}$ & $-i$ \\
$n_{1}\cdot n_{-1} \cdot n_{3} \cdot n_{-3}$ & $0 $ \\
$n_{1}\cdot n_{1} \cdot n_{-3} \cdot n_{1}$ & $\frac{i}{2} $ \\
$n_{1}\cdot n_{1} \cdot n_{-1} \cdot n_{-1}$ & $\frac{i}{2} $ \\
$n_{1}\cdot n_{1} \cdot n_{1} \cdot n_{-3}$ & $\frac{-i}{6} $ \\
$n_{1}\cdot n_{3} \cdot n_{-3} \cdot n_{-1}$ & $\frac{i}{12} $ \\
$n_{1}\cdot n_{3} \cdot n_{-1} \cdot n_{-3}$ & $\frac{-i}{12} $ \\
$n_{3}\cdot n_{-3} \cdot n_{-3} \cdot n_{3}$ & $ 0$ \\
$n_{3}\cdot n_{-3} \cdot n_{-1} \cdot n_{1}$ & $0$ \\
$n_{3}\cdot n_{-3} \cdot n_{1} \cdot n_{-1}$ & $0$ \\
$n_{3}\cdot n_{-3} \cdot n_{3} \cdot n_{-3}$ & $\frac{-i}{27} $ \\
$n_{3}\cdot n_{-1} \cdot n_{-3} \cdot n_{1}$ & $\frac{i}{6} $ \\
$n_{3}\cdot n_{-1} \cdot n_{-1} \cdot n_{-1}$ & $\frac{-i}{6} $ \\
$n_{3}\cdot n_{-1} \cdot n_{1} \cdot n_{-3}$ & $\frac{-i}{6} $ \\
$n_{3}\cdot n_{1} \cdot n_{-3} \cdot n_{-1}$ & $\frac{-i}{12}$ \\
$n_{3}\cdot n_{1} \cdot n_{-1} \cdot n_{-3}$ & $\frac{i}{12} $ \\
$n_{3}\cdot n_{3} \cdot n_{-3} \cdot n_{-3}$ & $\frac{i}{54} $ \\
\hline
\end{tabular} 
\end{center}

\section{Correction of a polynomial vector field}

\subsection{Prepared form and alphabet}

Let $X$ a polynomial vector field in $\mathbb{C}^2$ of the form:
\begin{equation*}
X=X_{lin}+ P_r(x,y)\partial_x+Q_r(x,y)\partial_y
\end{equation*}

where $P_r$ and $Q_r$ are homogeneous polynomials of degree $r$ such that: 
\begin{equation*}
P_r(x,y)=\underset{k=0}{\overset{r}{\sum}}p_{k-1,r-k}x^k y^{r-k}, \\
Q_r(x,y)=\underset{l=0}{\overset{r}{\sum}}q_{r-k,k-1}x^{r-k} y^k.
\end{equation*}
 
In the following, we describe explicitly the prepared form of $X$, the set $A(X)$ and the operators $B_n$ for a given vector field $X$ of the form $X=X_{lin}+\underset{r=2}{\overset{l}{\sum}}X_r$.
 
So we can write :
\begin{align*}
P_r(x,y)\partial_x+Q_r(x,y)\partial_y 
& = \underset{k=1}{\overset{r}{\sum}}\left( p_{k-1,r-k}x^{k-1}y^{r-k}x\partial_x+q_{r-k,k-1}x^{r-k}y^{k-1}y\partial_y \right)+ p_{-1,r}y^r\partial_x + q_{r,-1}x^r\partial_y, \\
&=\underset{k=1}{\overset{r}{\sum}} (\mathit{O}_{(k-1,r-k)}+\tilde{\mathit{O}}_{(r-k,k-1)})+\mathit{O}_{(-1,r)}+\mathit{O}_{(r,-1)},
\end{align*} 
with \begin{align*}
\mathit{O}_{(k-1,r-k)} &=p_{k-1,r-k}x^{k-1}y^{r-k}x \partial_x, \\
\tilde{\mathit{O}}_{(r-k,k-1)} &=q_{r-k,k-1}x^{r-k}y^{k-1} y \partial_y, \\
\mathit{O}_{(-1,r)} &= p_{-1,r}y^r\partial_x, \\
\mathit{O}_{(r,-1)} &= q_{r,-1}x^r\partial_y.
\end{align*}

We want to know if there exist some operators of the same degree among the operators $O_{(k-1,r-k)}$ and $\tilde{O}_{(r-k,k-1)}$. If it is the case, we are allow to gather them in a same operator of the form $B_{(n^1,n^2)}=x^{n^1}y^{n^2}(p_{(n^1,n^2)}x \partial_x +q_{(n^1,n^2)}y\partial_y)$. \\
For that, we have to solve:
\begin{equation*}
k-1=r-\tilde{k}
\end{equation*}
where $k,\tilde{k} \in \lbrace 1 ,...,r \rbrace $. As $\tilde{k} \in \lbrace 1,...,r \rbrace$, $r-\tilde{k} \in \lbrace 0,...,r-1 \rbrace$ hence $r-\tilde{k}+1 \in \lbrace 1,...,r \rbrace$. So there always exist solutions and we have the following lemma: 

\begin{lem} 
\label{letterXr}
For all $r \geq 2$, we can associate an alphabet to the vector field $X_r$, written $A(X_r)$  given by: 
\begin{equation*}
A(X_r)=\lbrace (r,-1), \ (-1,r) , \ (k-1,r-k) \ \text{with} \ k=1,...,r \rbrace .
\end{equation*}
Moreover, we get from $A(X_r)$ the set denoted $\mathcal{B}(X_r)$ of the homogeneous differential operators given by the decomposition : 
\begin{align*}
&B_{(-1,r)}  =p_{-1,r}y^r\partial_x, \\
&B_{(r,-1)}  =q_{r,-1}x^r \partial_y, \\
&B_{(k-1,r-k)}  =x^{k-1}y^{r-k}(p_{k-1,r-k}x\partial_x+q_{k-1,r-k}y\partial_y )
\end{align*}
with $k \in \lbrace 1,...,r \rbrace$.
\end{lem}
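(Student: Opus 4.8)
The plan is to extract the prepared form of $X_r$ directly from the decomposition of $X_r=P_r\partial_x+Q_r\partial_y$ displayed above, grouping together those elementary operators that carry the same homogeneous degree. First I would record that each building block is a homogeneous differential operator: the operator $x^ay^b\,x\partial_x$ sends $x^\ell y^k$ to $\ell\,x^{a+\ell}y^{b+k}$ and is therefore homogeneous of degree $(a,b)$, and likewise $x^ay^b\,y\partial_y$ has degree $(a,b)$, while $y^r\partial_x$ has degree $(-1,r)$ and $x^r\partial_y$ has degree $(r,-1)$. Hence, in the expansion written above, $O_{(k-1,r-k)}$ has degree $(k-1,r-k)$, $\tilde O_{(r-k,k-1)}$ has degree $(r-k,k-1)$, and the two remaining terms have degrees $(-1,r)$ and $(r,-1)$ respectively.

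Next I would determine when two of these operators can be merged. An operator of type $O$ and one of type $\tilde O$ have a common degree exactly when $k-1=r-\tilde k$, the equation on the second coordinate being equivalent. For $k\in\{1,\dots,r\}$ the assignment $k\mapsto r-k+1$ takes values again in $\{1,\dots,r\}$ and is an involution, so each $O_{(k-1,r-k)}$ is paired with exactly one operator of type $\tilde O$, namely $\tilde O_{(k-1,r-k)}=q_{k-1,r-k}x^{k-1}y^{r-k}y\partial_y$; adding the pair gives the homogeneous operator $B_{(k-1,r-k)}=x^{k-1}y^{r-k}(p_{k-1,r-k}x\partial_x+q_{k-1,r-k}y\partial_y)$ of degree $(k-1,r-k)$ with $k\in\{1,\dots,r\}$. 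I would then check that the two leftover operators absorb nothing: the degree $(-1,r)$ equals no $(k-1,r-k)$ with $k\geq 1$ (this would force $k=0$) and differs from $(r,-1)$ for $r\geq 1$, and symmetrically $(r,-1)$ would force $k=r+1\notin\{1,\dots,r\}$; so $y^r\partial_x$ and $x^r\partial_y$ remain the separate operators $B_{(-1,r)}=p_{-1,r}y^r\partial_x$ and $B_{(r,-1)}=q_{r,-1}x^r\partial_y$. Collecting the degrees that actually occur then yields $A(X_r)=\{(r,-1),(-1,r)\}\cup\{(k-1,r-k):k=1,\dots,r\}$ together with the associated operators $\mathcal B(X_r)$, which is precisely the prepared form of $X_r$.

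Since every step is an elementary manipulation of indices, I do not expect a genuine obstacle; the only point deserving a moment of care is verifying that the substitution $\tilde k=r-k+1$ keeps the index in range and is bijective, which is what guarantees that the pairing of the $O$ and $\tilde O$ operators is perfect, so that no homogeneous component of $X_r$ is left out or counted twice.
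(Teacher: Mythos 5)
Your proposal is correct and follows essentially the same route as the paper: both decompose $X_r$ into the elementary operators $O_{(k-1,r-k)}$, $\tilde O_{(r-k,k-1)}$, $O_{(-1,r)}$, $O_{(r,-1)}$ and pair the first two families by solving $k-1=r-\tilde k$, i.e. via the involution $\tilde k=r-k+1$ on $\{1,\dots,r\}$. Your additional check that the degrees $(-1,r)$ and $(r,-1)$ cannot coincide with any $(k-1,r-k)$ is a small but welcome completion of the argument.
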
 

\begin{exm}
We consider the vector field $X=X_{lin}+X_2+X_3$ where 
\begin{align*}
X_2&=\left( p_{1,0}x^2+p_{0,1}xy + p_{-1,2}y^2 \right)\partial_x +\left( q_{-1,2}x^2+q_{1,0}xy+q_{0,1}y^^2\right)\partial_y, \\
X_3&=\left( p_{2,0}x^3+p_{1,1}x^2y+p_{0,2}xy^2+p_{-1,3}y^3 \right) \partial_x+\left( q_{3,-1}x^3+q_{2,0}x^2y+q_{1,1}xy^2+q_{0,2}y^3 \right)\partial_y .
\end{align*}
Hence we obtain the three following alphabets : 
\begin{align*} \
A(X_2) & = \lbrace (2,-1), \ (1,0), \ (0,1), \ (-1,2) \rbrace , \\
A(X_3) & = \lbrace (3,-1), \ (2,0), \ (1,1), \ (0,2), \ (-1,3) \rbrace .
\end{align*}
For example, we also have the element of the set $\mathcal{B}(X_3)$ :
\begin{align*}
&B_{(3,-1)}  = q_{3,-1}x^3 \partial_y, \\
&B_{(2,0)}   =x^2 \left( p_{2,0} x \partial_x + q_{2,0} y \partial_y \right), \\
&B_{(1,1)}  = xy \left( x p_{1,1} x \partial_x + q_{1,1} y \partial_y \right), \\
&B_{(0,2)}  = y^2 \left( p_{0,2} x \partial_x + q_{0,2} y \partial_y \right), \\
&B_{(-1,3)}  = p_{-1,3}y^3 \partial_x .
\end{align*}
\end{exm}

\begin{dfn}
We define the degree of a vector fields as the maximum of the degree of its defining polynomial.\\
In a same way, we define the degree of the homogeneous differential operators $B_n$ or of a Lie bracket of $B_n$ which appear in the decomposition of $X$. We denote by $deg(B_n)$ (resp. $deg([B_n])$) the degree of $B_n$ (resp. $[B_n]$).
\end{dfn}

\begin{lem}
Let $X$ be a vector fields of the form $X=X_{lin}+\underset{r=2}{\overset{m}{\sum}}X_r$ then $X$ admits the alphabet $A(X)=\underset{r=2}{\overset{m}{\cup}}A(X_r)$ and $\mathcal{B}(X)$, the set of homogeneous differential operators of $X$, is given by $\mathcal{B}(X)=\underset{r=2}{\overset{m}{\cup}}\mathcal{B}(X_r)$.
\end{lem}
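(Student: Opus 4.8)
The plan is to reduce the statement to the single-degree case already settled in Lemma \ref{letterXr}, together with a bookkeeping argument on the grading of homogeneous differential operators. First I would recall how the prepared form is produced: one writes the nonlinear part of the vector field as a sum of homogeneous differential operators and then gathers the operators of equal degree into a single $B_n$. Since $X_r = P_r\partial_x+Q_r\partial_y$ with $P_r,Q_r$ homogeneous of degree $r\geq 2$, the vector fields $X_r$ carry no linear part, so $X-X_{lin}=\sum_{r=2}^m X_r$; because the decomposition into homogeneous operators is linear, the list of homogeneous operators attached to $X$ is the concatenation of the lists attached to the $X_r$, possibly followed by a further gathering step across distinct values of $r$.

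The only point that really needs checking is that no such cross-gathering occurs. Inspecting the three families of letters in Lemma \ref{letterXr}, every $n=(n_1,n_2)\in A(X_r)$ satisfies $n_1+n_2=r-1$: this holds for $(r,-1)$, for $(-1,r)$, and for each $(k-1,r-k)$ with $k\in\{1,\dots,r\}$. Hence the integer $n_1+n_2$ recovers $r$ from $n$; in particular the sets $A(X_r)$ are pairwise disjoint, and an operator arising in the decomposition of $X_r$ can never share its degree with one arising in the decomposition of $X_{r'}$ for $r'\neq r$. Therefore the only admissible gathering is the one internal to each $X_r$, and that is exactly the content of Lemma \ref{letterXr}. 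For completeness I would also note that the letters inside a single $A(X_r)$ are pairwise distinct — the exceptional letters $(r,-1)$ and $(-1,r)$ are not of the form $(k-1,r-k)$, since that would force $k=r+1$ or $k=0$, both out of range — so no collision or cancellation occurs internally either.

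Combining these remarks, $A(X)=\bigcup_{r=2}^m A(X_r)$, and since the homogeneous operator attached to a letter $n$ is unambiguously determined and coincides with the one produced by the unique $X_r$ with $r=n_1+n_2+1$, we also get $\mathcal{B}(X)=\bigcup_{r=2}^m\mathcal{B}(X_r)$. The argument is essentially a verification and presents no serious obstacle; the one step requiring attention is the uniformity $n_1+n_2=r-1$ over all four types of letters, which is precisely what forbids mixing operators of different homogeneity, together with the (harmless) convention that a letter is kept in $A(X_r)$ only when the associated coefficient pair is nonzero.
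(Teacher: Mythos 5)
Your proposal is correct and follows essentially the same route as the paper: the key observation in both is that every letter $n=(n_1,n_2)\in A(X_r)$ satisfies $n_1+n_2=r-1$, which forces the alphabets $A(X_r)$ to be pairwise disjoint and rules out any gathering of operators across different degrees (the paper phrases the operator statement via the grading $Der(\mathbb{C}^2)=\bigoplus_r Der_r(\mathbb{C}^2)$, which is the same homogeneity argument). Your additional check that the letters inside a single $A(X_r)$ are pairwise distinct is a harmless supplement not present in the paper's proof.
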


\begin{proof}
For all $n=(n^1,n^2) \in A(X)$, we define the application :
\begin{align*}
p : A(X) & \rightarrow \mathbb{N} \\
n=(n^1,n^2) & \mapsto n^1+n^2 .
\end{align*}
For every $r \geq 2$, for all $n \in A(X_r)$, we have $p(n)=r-1$, so  $\forall r, r'$ , such that $r \neq r'$, we have $A(X_r) \cap A(X_{r'})=\emptyset$ because $p(A(X_r))\neq p(A(X_{r'}))$. 
Moreover, as $Der(\mathbb{C}^2)=\underset{r \geq 1 }{\bigoplus} Der_r( \mathbb{C}^2 )$ and $\mathcal{B}(X_r) \subset Der_r( \mathbb{C}^2 )$ , then $\mathcal{B}(X_r)\cap \mathcal{B}(X_{r'})=\emptyset$ if $r \neq r'$. 
\end{proof}

The elements of the alphabet $A(X)$ are named $\it{letter}$.

\subsection{Depth}

As we have a one-to-one correspondence between $(A^*(X),conc)$ and $(\mathcal{B}^*(X),\circ)$:
\begin{align*}
A^*(X) & \rightarrow \mathcal{B}^*(X),\\
\textbf{n}=n_1\cdot ... \cdot n_r & \mapsto B_{\textbf{n}}=B_{n_1}\circ ... \circ B_{n_r},\end{align*}
the degree of $[B_{\textbf{n}}]$ gives a natural notion of \textit{depth} for the words defined by:
\begin{dfn}We denote by $p:A^*(X)\rightarrow\mathbb{N}$ the mapping defined by:
\begin{align*}
p(\textbf{n})=deg([B_{\textbf{n}}])-1.
\end{align*}
\end{dfn}
 
\begin{lem}
The mapping $p$ is a morphism from $(A^*(X),conc)$ in $(\mathbb{N},+).$\end{lem}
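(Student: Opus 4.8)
The plan is to establish that $p$ preserves concatenation, i.e. that $p(\mathbf{n}\cdot\mathbf{m}) = p(\mathbf{n}) + p(\mathbf{m})$ for all words $\mathbf{n},\mathbf{m}\in A^*(X)$, and that $p(\emptyset)=0$. First I would recall that each letter $n\in A(X)$ corresponds to a homogeneous differential operator $B_n$, and that the homogeneity degree is additive under composition: if $B_n$ has degree $\nu(n)=(n^1,n^2)$ in the sense that $B_n(x^ly^k)=\beta_n^{l,k}x^{n^1+l}y^{n^2+k}$, then $B_n\circ B_m$ sends $x^ly^k$ to a multiple of $x^{n^1+m^1+l}y^{n^2+m^2+k}$, so $B_{\mathbf{n}}=B_{n_1}\circ\cdots\circ B_{n_r}$ is homogeneous of multidegree $\sum_j \nu(n_j)$. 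Hence, recalling the application $p:A(X)\to\mathbb{N}$, $n=(n^1,n^2)\mapsto n^1+n^2$ from the proof of the previous lemma, the total degree $\deg(B_{\mathbf{n}})$ as a differential operator equals $1+\sum_j p(n_j)$ (the $+1$ coming from the single $\partial_x$ or $\partial_y$), and therefore $\deg(B_{\mathbf{n}}\circ B_{\mathbf{m}}) = \deg(B_{\mathbf{n}})+\deg(B_{\mathbf{m}})-1$.

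Next I would translate this into a statement about the Lie bracket operators $[B_{\mathbf{n}}]$ appearing in the decomposition of $X$. Since $[B_{\mathbf{n}}]$ is built from $B_{\mathbf{n}}$ by iterated Lie brackets with the homogeneous pieces and bracketing homogeneous operators adds their multidegrees (the commutator $[A,B]=AB-BA$ of operators homogeneous of multidegrees $\mu$ and $\mu'$ is homogeneous of multidegree $\mu+\mu'$, shifted by the derivative structure in the expected way), one gets $\deg([B_{\mathbf{n}}]) = \deg(B_{\mathbf{n}})$, so that $p(\mathbf{n}) = \deg([B_{\mathbf{n}}])-1 = \sum_{j=1}^r p(n_j)$. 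From this explicit formula the morphism property is immediate: for $\mathbf{n}=n_1\cdots n_r$ and $\mathbf{m}=m_1\cdots m_s$,
\begin{align*}
p(\mathbf{n}\cdot\mathbf{m}) = \sum_{j=1}^r p(n_j) + \sum_{j=1}^s p(m_j) = p(\mathbf{n}) + p(\mathbf{m}),
\end{align*}
and $p(\emptyset)=0$ since $B_\emptyset$ is the identity, of degree $1$ as a differential operator in the relevant normalization, giving $p(\emptyset)=0$; alternatively one declares the empty composition to contribute $0$.

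The one point requiring care is the normalization of \emph{degree} for a differential operator versus for the Lie bracket $[B_{\mathbf{n}}]$, and making sure the ``$-1$'' bookkeeping is consistent, since $B_n$ raises monomial degree by $n^1+n^2$ but also contains one differentiation which lowers it by $1$; this is exactly why the paper sets $p(n)=n^1+n^2$ for a letter and why $p(\mathbf{n})=\deg([B_{\mathbf{n}}])-1$ with the single $-1$ regardless of word length. I expect the main (minor) obstacle to be verifying cleanly that passing from $B_{\mathbf{n}}$ to its associated bracket operator $[B_{\mathbf{n}}]$ in the prepared-form decomposition does not change the multidegree, i.e. that the operations producing $[B_{\mathbf{n}}]$ are degree-preserving; once that is granted, the additivity of multidegree under composition does all the work and the morphism property follows formally.
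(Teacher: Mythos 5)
Your proof is correct and follows essentially the same route as the paper: both arguments rest on the additivity of the homogeneity multidegree under composition/bracketing of the operators $B_n$, together with the single ``$-1$'' normalization in $p(\mathbf{n})=\deg([B_{\mathbf{n}}])-1$ (the paper phrases this as an induction on word length via $\deg([B_{n_1},B_{\mathbf{n}}])=\deg(B_{n_1})+\deg(B_{\mathbf{n}})-1$, while you derive the closed formula $p(\mathbf{n})=\sum_j p(n_j)$ directly, which is the same computation). Your additional remarks on the empty word and on the bracket preserving the multidegree of the composition are consistent with, and slightly more careful than, the paper's argument.
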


\begin{proof}
We prove it by induction on the length of the words. Let $n_1,n_2 \in A(X)$,
\begin{align*}
p(n_1\cdot n_2)&=deg([B_{n_1}B_{n_2}])-1 \\
&=deg(B_{n_1})+deg(B_{n_2})-1-1 \\
&=p(n_1)+p(n_2).
\end{align*}
Let $n_1\in A(X)$ and $\textbf{n} \in A^*(X)$, so: 
\begin{align*}
p(n_1\cdot \textbf{n})&=deg([B_{n_1},B_{\textbf{n}}])-1 \\
&=deg(B_{n_1})+deg(B_{\textbf{n}})-1-1 \\
&=p(n_1)+p(\textbf{n}).
\end{align*}
\end{proof}

Let $\mathcal{M}(X)$ be a mould series: 
\begin{align*}
\mathcal{M}(X)=\underset{\textbf{n} \in A^*(X)}{\sum}M^{\textbf{n}}B_{\textbf{n}},
\end{align*}
where $M^\bullet$ is alternal (\cite{ev2}), i.e. $\mathcal{M}(X)$ is \textit{primitive}(\cite{jps}, p.17). In this case, using the {\it projection theorem} (\cite{jps}, p.28), the mould $\mathcal{M}(X)$ can be expressed in the following form :
\begin{align*}
\mathcal{M}(X)=\underset{r \geq 1}{\sum}\frac{1}{r}\underset{l(\textbf{n})=r}{\underset{\textbf{n}\in A^*(X)}{\sum}}M^{\textbf{n}}[B_{\textbf{n}}],
\end{align*}
where $[B_{\textbf{n}}]=[B_{n_1...n_r}]=[...[[B_{n_1},B_{n_2}],B_{n_3}],...],B_{n_{r-1}}],B_{n_r}]$. \\
We have to reorganise this sum using the depth as follows:
\begin{align*}
\mathcal{M}(X)=\underset{d \geq 1}{\sum} \mathcal{M}_d(X),
\end{align*}
where $\mathcal{M}_d(X)=\underset{p(\textbf{n})=d}{\underset{\textbf{n}\in A^*(X)}{\sum}}M^{\textbf{n}}B_{\textbf{n}}$. \\
A useful consequence is that the equation $\mathcal{M}(X)=0$ is equivalent to $\mathcal{M}_d(X)=0$ for all $d\geq 1$.

\subsection{Expression of the correction and criterion of linearisability }

The main property of the correction is that it provides a useful and simple criterion of linearizability. Indeed, we have by definition of the correction (see \cite{ev2}, p.258) :

\begin{lem}
A vector field $X$ is linearizable if and only if $Carr(X)=0$.
\end{lem}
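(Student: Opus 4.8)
The statement to prove is that a vector field $X$ (in prepared form) is linearisable if and only if $Carr(X)=0$. The natural route is to unwind the defining property of the correction from its definition, so the plan is to reduce the equivalence to a uniqueness statement about the pair $(Y,Z)$ attached to $X$. First I would recall that, by definition, the correction $Z=Carr(X)$ is characterized by the two conditions $X-Z \sim Y$ and $[Y,Z]=0$, where $Y$ is the (unique up to formal conjugacy) linearisable vector field with the same linear part as $X$, and $\sim$ denotes formal conjugacy. Concretely one may take $Y=X_{lin}$, so the conditions read: $X-Z$ is formally conjugate to $X_{lin}$, and $[X_{lin},Z]=0$, i.e. $Z$ is a resonant vector field (this is exactly why, in the mould expansion, only resonant words contribute, cf. the first Lemma on $Carr^\bullet$). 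Existence and uniqueness of such a $Z$ is precisely the content of the Ecalle--Vallet result quoted before the statement, so I would cite \cite{ev2} for it and not reprove it.

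\textbf{The two implications.} For the ``only if'' direction: assume $X$ is linearisable, i.e. $X \sim X_{lin}$. Then the pair $(Y,Z)=(X_{lin},0)$ trivially satisfies $X-0 = X \sim X_{lin} = Y$ and $[X_{lin},0]=0$. By the uniqueness part of the definition/construction of the correction, $Carr(X)=Z=0$. For the ``if'' direction: assume $Carr(X)=0$. Then the defining relation $X - Carr(X) \sim Y$ becomes $X \sim Y$, and since $Y$ is by hypothesis linearisable (same linear part as $X$, and chosen linearisable), $X$ is formally conjugate to a linear vector field, hence linearisable. In both directions the only nontrivial input is the well-posedness (existence and uniqueness) of the correction, which is assumed from \cite{ev2}; the argument itself is then a one-line consequence in each direction.

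\textbf{Main obstacle.} The only real subtlety is making precise the sense of ``linearisable'' and ``formally conjugate'' so that the equivalence is not circular: one must use that the linearisable vector field $Y$ with prescribed linear part is well defined up to formal conjugacy, and that ``$X$ linearisable'' means exactly ``$X \sim X_{lin}$''. Once this bookkeeping is fixed, there is nothing to compute. I would therefore keep the proof short: state that $Y$ may be taken to be $X_{lin}$, invoke uniqueness of the correction from \cite{ev2}, and then dispatch both implications as above. A remark could be added that this is why resonance of words is forced — $[X_{lin},Z]=0$ kills all non-resonant contributions — tying the criterion to the mould-theoretic Lemmas that follow, but this is not needed for the proof.
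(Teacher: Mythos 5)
Your argument is correct and coincides with what the paper does: the paper gives no written proof, asserting the lemma follows ``by definition of the correction'' with a reference to \cite{ev2}, and your two implications (uniqueness of $Z$ forces $Carr(X)=0$ when $X\sim X_{lin}$; conversely $Carr(X)=0$ gives $X\sim Y$ with $Y$ linearisable) are exactly the routine unwinding of that definition. The only substantive input in both treatments is the existence and uniqueness of the correction, which you correctly delegate to Ecalle--Vallet rather than reproving.
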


Using the above decomposition, we obtain an explicit criterion for linearizability writing $Carr(X)$ as :
\begin{align*}
 Carr(X)=\underset{p\geq 1}{\sum}\left( \underset{p({\textbf{n}})=p}{\underset{{\textbf{n}}\in A^*(X)}{\sum}} Carr^{{\textbf{n}}}B_{{\textbf{n}}} \right)=\underset{p \geq 1}{\sum}Carr_p(X).
 \end{align*}

\begin{theorem}
\label{caractiso}
A vector fields $X$ is linearizable if and only if $Carr_p(X)=0$ $\forall p\geq 1$.
\end{theorem}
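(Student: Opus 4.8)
The statement to prove is Theorem \ref{caractiso}: a vector field $X$ is linearizable if and only if $Carr_p(X)=0$ for all $p\geq 1$. The plan is to reduce this to the already-established criterion (Lemma: $X$ is linearizable iff $Carr(X)=0$) together with the depth decomposition developed in the preceding subsection. First I would recall that the correction admits the mould expansion $Carr(X)=\sum_{\bn\in A^*(X)}Carr^{\bn}B_{\bn}$, and that since $Carr^\bullet$ is alternal (hence the mould series is primitive in the sense of \cite{jps}), the projection theorem lets us rewrite this sum over Lie brackets $[B_{\bn}]$. Grouping the terms according to the depth $p(\bn)=\deg([B_{\bn}])-1$, which is a morphism $(A^*(X),conc)\to(\N,+)$ by the Lemma above, we obtain the decomposition $Carr(X)=\sum_{p\geq1}Carr_p(X)$ where $Carr_p(X)=\sum_{\bn\in A^*(X),\,p(\bn)=p}Carr^{\bn}B_{\bn}$.

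The key observation is that the operators appearing in distinct $Carr_p(X)$ live in distinct graded components of $Der(\C^2)=\bigoplus_{r\geq1}Der_r(\C^2)$: if $p(\bn)=p$ then $\deg([B_{\bn}])=p+1$, so $[B_{\bn}]\in Der_{p+1}(\C^2)$, and the same holds for $B_{\bn}$ itself since $B_{\bn}$ is homogeneous of the same degree as $[B_{\bn}]$. Therefore $Carr_p(X)\in Der_{p+1}(\C^2)$ for each $p$, and the sum $\sum_{p\geq1}Carr_p(X)$ is a sum of elements lying in pairwise distinct summands of the direct sum decomposition of $Der(\C^2)$. Consequently $Carr(X)=0$ forces each homogeneous component to vanish, i.e. $Carr_p(X)=0$ for every $p\geq1$; the converse implication is trivial. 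Combined with the linearizability criterion $Carr(X)=0\iff X$ linearizable, this yields the theorem. (This is exactly the "useful consequence" already flagged at the end of the Depth subsection, applied to $\mathcal{M}=Carr$.)

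I do not expect a serious obstacle here; the only point requiring a little care is the justification that grading by depth genuinely corresponds to grading by homogeneous degree in $Der(\C^2)$, so that the vanishing of $Carr(X)$ can be split component by component. This rests on the facts that (i) $p$ is a morphism (proved above), (ii) $\deg(B_{\bn})=\deg([B_{\bn}])$ because composing homogeneous operators and taking their Lie bracket both produce homogeneous operators of the summed degree, and (iii) the directness of $Der(\C^2)=\bigoplus_{r\geq1}Der_r(\C^2)$. Once these are in place, the equivalence is immediate. One should also note that the depth is always $\geq 1$ on nonempty words whose letters come from $A(X)$ with $X$ in prepared form (each $B_n$ has degree $\geq 2$), so the index range $p\geq1$ is correct and no $p=0$ term is lost.
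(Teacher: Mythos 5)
Your proposal is correct and follows exactly the route the paper intends: it combines the lemma that $X$ is linearizable iff $Carr(X)=0$ with the depth decomposition $Carr(X)=\sum_{p\geq 1}Carr_p(X)$ and the ``useful consequence'' that $\mathcal{M}(X)=0$ is equivalent to $\mathcal{M}_d(X)=0$ for all $d$, which the paper treats as immediate. Your added justification that the depth grading coincides with the homogeneous-degree grading of $Der(\C^2)=\bigoplus_{r\geq 1}Der_r(\C^2)$, so that the vanishing of the sum splits component by component, is precisely the detail the paper leaves implicit.
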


In the following, we derive some properties of the quantities $Carr_p(X)$.

\section{Correction of real polynomial Hamiltonian vector fields}

\subsection{General properties}

An interesting property of the correction is that we just have to consider the even depth, indeed :

\begin{theorem}
Let $X$ be a real Hamiltonian vector fields as above. Its correction in odd depth is zero, i.e.
\begin{equation}
Carr_{2p+1}(X)=0,
\end{equation}
for all integer $p$.
\end{theorem}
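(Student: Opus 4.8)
The statement to prove is that for a real polynomial Hamiltonian vector field $X$, the correction in odd depth vanishes: $Carr_{2p+1}(X)=0$. I would prove this by combining two structural facts: (i) a parity/grading constraint coming from the Hamiltonian hypothesis, which forces any word $\textbf{n}$ appearing with nonzero mould coefficient and contributing to the correction to have depth of a fixed parity relative to its length; and (ii) the resonance condition $\omega(\textbf{n})=0$ together with the special eigenvalue structure $\lambda=(i,-i)$ of $X_{lin}$, which links the length of a resonant word to its depth.

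\emph{Step 1: set up the bookkeeping.} Recall that by Lemma on resonance, only resonant words contribute: $Carr^{\textbf{n}}=0$ unless $\omega(\textbf{n})=0$. For a letter $n=(n^1,n^2)\in A(X)$ we have $\omega(n)=\langle n,\lambda\rangle=i(n^1-n^2)$, and the depth contribution is $p(n)=n^1+n^2$ (from the morphism $p$, since $p(n)=\deg(B_n)-1=(n^1+n^2+1)-1$ for the operators $O_{(k-1,r-k)}$, etc.). The key observation is the congruence $n^1+n^2 \equiv n^1-n^2 \pmod 2$, i.e. $p(n)\equiv \frac{\omega(n)}{i}\pmod 2$ for every letter. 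Summing over a word $\textbf{n}=n_1\cdots n_r$ and using that $p$ and $\omega$ are both morphisms, $p(\textbf{n})\equiv \frac{\omega(\textbf{n})}{i}\pmod 2$. Hence if $\textbf{n}$ is resonant, $\omega(\textbf{n})=0$, so $p(\textbf{n})$ is \emph{even}. This already shows that $Carr_{2p+1}(X)=0$ for \emph{any} polynomial vector field whose alphabet is generated as above with $X_{lin}$ having eigenvalues $(i,-i)$ — no Hamiltonian hypothesis is even needed for this particular congruence argument, only the $(i,-i)$ spectrum.

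\emph{Step 2: handle the general statement.} If the theorem is meant in the stated generality (eigenvalues $(i,-i)$, which is the standing assumption for the complexified real planar case), Step 1 finishes it: every resonant word has even depth, so the odd-depth component of $Carr(X)=\sum_p Carr_p(X)$ is identically zero. If instead the intended proof routes through a genuinely Hamiltonian symmetry (e.g. an involution $x\leftrightarrow \bar x$ combined with a sign, under which $Carr$ transforms with a sign $(-1)^{p(\textbf{n})}$ on depth-$p$ pieces, and $Carr$ is invariant), I would: write the Hamiltonian condition $p_{n^1,n^2}=\overline{q_{\dots}}$-type relations explicitly for the $B_n$; check that the symmetry acts on each $B_{\textbf{n}}$ by $(-1)^{p(\textbf{n})}$; invoke functoriality of the correction (it commutes with linear changes of coordinates respecting the prepared form, by its defining conjugacy property) to conclude $Carr_p(X)=(-1)^p Carr_p(X)$, killing odd $p$.

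\emph{Expected main obstacle.} The delicate point is justifying that the correction really does transform the way one wants under whichever symmetry is used — the defining property of $Carr(X)$ (Definition: find $Z$ with $X-Z\sim Y$ and $[Y,Z]=0$) is characterized only up to formal conjugacy, so one must argue that the relevant symmetry either preserves $X_{lin}$ exactly or is absorbed into the conjugacy, and that the depth-graded pieces $Carr_p$ inherit the grading cleanly (this uses that $p$ is a morphism, already proved). If the congruence argument of Step 1 is accepted as the intended route, then the only thing to check carefully is the claim $p(n)\equiv \omega(n)/i \pmod 2$ for \emph{every} letter of $A(X)$ — including the exceptional letters $(-1,r)$ and $(r,-1)$ — which is immediate since $-1-r\equiv -1+r\pmod 2$ anyway. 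I would present Step 1 as the proof, remarking afterward that the Hamiltonian hypothesis is what guarantees $X_{lin}$ has the spectrum $(i,-i)$ and a center rather than a focus, which is where that assumption is genuinely used upstream.
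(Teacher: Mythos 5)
Your Step 1 is essentially the paper's own proof: the paper derives the result from Lemma \ref{crochet}, whose proof shows that resonance $\omega(\textbf{n})=i\bigl(\sum n_j^1-\sum n_j^2\bigr)=0$ forces $\sum n_j^1=\sum n_j^2=\alpha$ and hence $p(\textbf{n})=2\alpha$, which is exactly your parity observation (indeed the paper gets the exact equality, not just the congruence mod $2$), combined with the fact that only resonant words contribute to $Carr(X)$. Your remark that only the $(i,-i)$ spectrum is used, not the Hamiltonian structure per se, is also consistent with the paper's argument; the symmetry route sketched in your Step 2 is not needed.
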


This theorem is a consequence of the following lemma : 
 
\begin{lem}
\label{crochet}
For a resonant word \textbf{n}, the related Lie bracket is on the form :
\begin{align*}
[B_{\textbf{n}}]=(xy)^{\frac{p(\textbf{n})}{2}}(P_{\textbf{n}}x \partial_x+Q_{\textbf{n}}y \partial_y).
\end{align*}
\end{lem}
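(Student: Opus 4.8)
The plan is to proceed by induction on the length $r$ of the word $\textbf{n}=n_1\cdots n_r$, simultaneously tracking two pieces of information about the iterated bracket $[B_{\textbf{n}}]$: its bidegree $(a,b)$ (so that it acts on monomials by shifting $x$-exponent by $a$ and $y$-exponent by $b$), and the fact that it is a derivation of the special shape $x^a y^b(Px\partial_x+Qy\partial_y)$ once $\textbf{n}$ is resonant. The key structural observations are: (i) each generator $B_n$ with $n=(n^1,n^2)$ has bidegree $(n^1,n^2)$ and $p(\textbf{n})=\sum_j(n^1_j+n^2_j)$ by the morphism property of $p$ established above; (ii) the weight $\omega(n_j)=\langle n_j,\lambda\rangle$ with $\lambda=(i,-i)$ for the Hamiltonian/linear-center case equals $i(n^1_j-n^2_j)$, so $\omega(\textbf{n})=0$ forces $\sum_j n^1_j=\sum_j n^2_j$, i.e. the total bidegree $(a,b)$ of $B_{\textbf{n}}$ satisfies $a=b=p(\textbf{n})/2$; and (iii) the bracket of two operators of the form $x^{a}y^{b}(P x\partial_x+Qy\partial_y)$ and $x^{a'}y^{b'}(P' x\partial_x+Q'y\partial_y)$ is again of that form with bidegree $(a+a',b+b')$ — this is a direct computation using $[x\partial_x,x\partial_x]=0$ and the Leibniz rule, and it is the algebraic heart of the argument.

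Concretely, I would first record the "diagonal derivation" algebra: let $\mathcal{V}$ be the space of operators $x^{a}y^{b}(P x\partial_x+Qy\partial_y)$ over all $(a,b)$ and scalars $P,Q$; show $\mathcal{V}$ is closed under the Lie bracket and compute the bidegree additivity. The generators $B_{(k-1,r-k)}=x^{k-1}y^{r-k}(p_{k-1,r-k}x\partial_x+q_{k-1,r-k}y\partial_y)$ lie in $\mathcal{V}$ by Lemma \ref{letterXr}; the two "exceptional" generators $B_{(-1,r)}=p_{-1,r}y^r\partial_x$ and $B_{(r,-1)}=q_{r,-1}x^r\partial_y$ do \emph{not} lie in $\mathcal{V}$ (they carry a shift in the pure derivation direction), so they must be handled separately — but note $y^r\partial_x = x^{-1}y^r(x\partial_x)\cdot(\text{not a diagonal operator})$, rather one writes $\partial_x$ acting, and one checks that a single such generator has bidegree $(-1,r)$ resp. $(r,-1)$; these are odd in the sense that $n^1-n^2\neq 0$ is forced to be large. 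The point is that in the Hamiltonian case these exceptional letters still have a well-defined bidegree, and brackets involving them still obey bidegree additivity, so the constraint $a=b=p(\textbf{n})/2$ coming from resonance still holds; the finer claim that the result is a \emph{diagonal} derivation then follows because resonance $a=b$ combined with homogeneity leaves, after the chain of brackets, only the $x\partial_x$ and $y\partial_y$ components surviving (the off-diagonal pieces $\partial_x$, $\partial_y$ have nonzero weight and cannot be resonant).

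The main obstacle I anticipate is exactly the bookkeeping around the exceptional generators $B_{(-1,r)}$ and $B_{(r,-1)}$: one must verify that when such a letter appears inside a \emph{resonant} word, the iterated bracket still collapses to the diagonal form $(xy)^{p(\textbf{n})/2}(P_{\textbf{n}}x\partial_x+Q_{\textbf{n}}y\partial_y)$ rather than retaining a $\partial_x$ or $\partial_y$ tail. The clean way to dispatch this is to observe that any homogeneous differential operator of bidegree $(a,b)$ on $\C^2$ that is a derivation decomposes uniquely as $x^{a+1}y^b(\,\cdot\,)\partial_x + x^a y^{b+1}(\,\cdot\,)\partial_y$ with scalar coefficients once we are in the one-dimensional-per-direction homogeneous component, and that the Lie algebra generated by all the $B_n$ inside each homogeneous component is spanned by the "Euler-type" field $x\partial_x$-combinations precisely when $a=b$; resonance gives $a=b$, hence the stated normal form. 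I would present the diagonal-algebra closure computation (item (iii)) in full since it is short, and invoke bidegree additivity plus the weight computation to pin down $a=b=p(\textbf{n})/2$, relegating the exceptional-letter verification to a short separate paragraph or to the Appendix alongside the other explicit computations referenced in the text.
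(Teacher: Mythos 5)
Your proposal is correct and follows essentially the same route as the paper: one shows the iterated bracket is a homogeneous diagonal derivation $x^{a}y^{b}(Px\partial_x+Qy\partial_y)$ with additive bidegree (the paper does this via the recursive bracket formula of Lemma \ref{polycro}), resonance forces $a=b$, and the morphism property of $p$ identifies $a=b=p(\textbf{n})/2$. The one place you overcomplicate is the ``exceptional'' letters: $B_{(-1,r)}=p_{-1,r}y^{r}\partial_x$ already has the diagonal form, namely $x^{-1}y^{r}(p_{-1,r}\,x\partial_x+0\cdot y\partial_y)$, because any homogeneous derivation of $\C[x,y]$ of bidegree $(a,b)$ is automatically $c\,x^{a+1}y^{b}\partial_x+c'\,x^{a}y^{b+1}\partial_y=x^{a}y^{b}(c\,x\partial_x+c'\,y\partial_y)$ --- so the closure computation applies uniformly, there is no ``off-diagonal tail'' to kill, and the final exponents are nonnegative since resonance gives $a=b=p(\textbf{n})/2\geq 1$ for any nonempty resonant word.
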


\begin{proof}
For all resonant word $\textbf{n}=n_1\cdot ... \cdot n_r$, the related Lie bracket is :
\begin{align*}
[B_{\textbf{n}}]=x^{\sum n_j^1}y^{\sum n_j^2}(P_{\textbf{n}}x \partial_x+Q_{\textbf{n}}y \partial_y).
\end{align*} 
where $n_j=(n_j^1,n_j^2)$. As \textbf{n} is resonant, we have :
\begin{align*}
\omega (\textbf{n})&=\omega(n_1)+...+\omega(n_r)\\
&=i \left( \sum n_j^1 -  \sum n_j^2 \right)=0,
\end{align*}
then $\sum n_j^1=\sum n_j^2=\alpha \in \mathbb{N}$. We just have to remark that $p(\textbf{n})=p(n_1)+...+p(n_r)=\sum n_j^1+\sum n_j^2=2 \alpha$, then $\alpha = \frac{p(\textbf{n})}{2}$.
\end{proof}

As a consequence, we can restrict our attention to the even components of the correction. For a given integer $p$, terms in $Carr_{2p}$ can be decomposed with respect to the length of words. Precisely, we have 
\begin{equation}
Carr_p (X)=Carr_{p,1} (X)+\dots +Carr_{p,p} (X), 
\end{equation}
where 
\begin{equation}
Carr_{p,j} (X)=\underset{p({\textbf{n}})=p,\, l(\textbf{n})=j}{\underset{{\textbf{n}}\in A^*(X)}{\sum}} Carr^{{\textbf{n}}}B_{{\textbf{n}}} ,
\end{equation}
for $j=1,\dots ,p$.

The main point is that of course, this is a {\it finite} sum. Indeed, as each differential operator entering in the definition are at least of depth one, we can not have more than a word of length $p$ as for all $j\in \NN^*$, $\bn \in A^* (X)$ such that $l(\bn )=j$, we have $p(\bn ) \geq j$.\\

Moreover, some of these terms are easily determined. 

\begin{lem}
\label{propcor}
Let $p\in \NN^*$, we have 
\begin{equation}
\left .
\begin{array}{lll}
Carr_{2p,2p} (X) & = & Carr_{2p,2p} (X_2),\\
Carr_{2p,2} (X) & = & Carr_{2p,2} (X_{p+1} )+\di\sum_{r=2}^p Carr_{2p,2} (X_r ,X_{2p-r+2} ),\\
Carr_{2p,1} (X) & = & Carr_{2p} (X_{2p+1} ) =B_{p,p} .
\end{array}
\right .
\end{equation}
\end{lem}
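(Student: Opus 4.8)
The strategy is to analyze, for each of the three identities, which words $\textbf{n} \in A^*(X)$ of given length and given depth can actually contribute a nonzero term to $Carr_{2p,j}(X)$, using the morphism property of $p$ (the depth map) together with the fact that every letter of $A(X_r)$ has depth $r-1$ and every letter has depth $\geq 1$. First I would record the basic counting constraint: if $\textbf{n}=n_1\cdots n_j$ has $p(\textbf{n})=2p$, then $\sum_{k=1}^j p(n_k) = 2p$ with each $p(n_k)\geq 1$; if $n_k \in A(X_{r_k})$ then $p(n_k)=r_k-1$, so $\sum (r_k-1) = 2p$, i.e. $\sum r_k = 2p+j$.

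For the first identity $Carr_{2p,2p}(X) = Carr_{2p,2p}(X_2)$: a word of length $2p$ and depth $2p$ forces $p(n_k)=1$ for every $k$, hence each $n_k \in A(X_2)$ (the only letters of depth $1$). Thus the only words contributing are words entirely in the $X_2$-alphabet, which is exactly $Carr_{2p,2p}(X_2)$. The values $B_{\textbf{n}}$ also only involve $X_2$-operators, so the equality is literal.

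For the second identity $Carr_{2p,2}(X) = Carr_{2p,2}(X_{p+1}) + \sum_{r=2}^p Carr_{2p,2}(X_r,X_{2p-r+2})$: a word $n_1 n_2$ of length $2$ and depth $2p$ needs $p(n_1)+p(n_2)=2p$ with $n_1 \in A(X_{r_1})$, $n_2 \in A(X_{r_2})$, so $(r_1-1)+(r_2-1)=2p$, i.e. $r_1+r_2 = 2p+2$ with $r_1,r_2 \geq 2$. Enumerating the unordered pairs $\{r_1,r_2\}$: either $r_1=r_2=p+1$ (both letters from $X_{p+1}$, giving the term $Carr_{2p,2}(X_{p+1})$), or $r_1 = r \neq r_2 = 2p-r+2$ for $r=2,\dots,p$ (one letter from $X_r$, one from $X_{2p-r+2}$, in either order, giving the mixed terms $Carr_{2p,2}(X_r, X_{2p+2-r})$). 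I would then note that the notation $Carr_{2p,2}(X_r, X_s)$ is understood to collect both orderings of the two letters, so the decomposition is exhaustive and disjoint.

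For the third identity $Carr_{2p,1}(X) = Carr_{2p}(X_{2p+1}) = B_{p,p}$: a word of length $1$, say $\textbf{n}=n_1$, with $p(n_1)=2p$ must have $n_1 \in A(X_{2p+1})$ since letters of $X_r$ have depth $r-1$; so only the $X_{2p+1}$-alphabet contributes, giving $Carr_{2p}(X_{2p+1})$. For the final equality $Carr_{2p}(X_{2p+1}) = B_{p,p}$ I would invoke Lemma 2(1): a single-letter word is resonant iff $\omega(n_1)=0$, and by Lemma \ref{crochet} a resonant letter of depth $2p$ has $B_{n_1}$ of the form $(xy)^{p}(p_{p,p}x\partial_x + q_{p,p}y\partial_y)$; among the letters of $A(X_{2p+1})$ the unique resonant one is $(p,p)$ (using the description of $A(X_r)$ in Lemma \ref{letterXr}: the letter $(k-1, r-k)$ with $r = 2p+1$ is resonant iff $k-1 = r-k$, i.e. $k = p+1$), and $Carr^{(p,p)} = 1$ by Lemma 2(1), so $Carr_{2p,1}(X) = B_{(p,p)} = B_{p,p}$.

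The only genuinely delicate point is bookkeeping in the second identity — making sure the ordered-versus-unordered conventions in $Carr_{2p,2}(X_r,X_s)$ are consistent and that the endpoint cases ($r=2$ giving $X_2$ and $X_{2p}$, and the diagonal $r=p+1$) are neither double-counted nor omitted; everything else is a direct consequence of the additivity of the depth morphism $p$ and the alphabet description in Lemma \ref{letterXr}.
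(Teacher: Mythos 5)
Your proposal is correct and follows essentially the same route as the paper: the paper likewise derives the first identity from the fact that a word of length $2p$ and depth $2p$ must consist of depth-one letters (hence letters of $A(X_2)$), obtains the second by enumerating the pairs $(r_1,r_2)$ with $r_1+r_2=2p+2$ contributing in length two, and obtains the third from the observation that the unique resonant letter of $A(X_{2p+1})$ is $\left(\frac{r-1}{2},\frac{r-1}{2}\right)=(p,p)$ with mould value $1$. Your write-up is in fact more explicit than the paper's sketch on the depth bookkeeping, while the paper adds only the supplementary remark (via the weight decomposition $\mathcal{W}(X_r)=\mathcal{W}^+\cup\mathcal{W}^-\cup\mathcal{W}^0$) that the diagonal term $Carr_{2p,2}(X_{p+1})$ always carries resonant words.
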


This Lemma has important implications on the following. In particular, it gives the maximal degree of the homogeneous vector fields $X_r$ entering in the computation of a given correction term. In particular, for $Carr_{2p}$ we have no terms coming from the $X_r$ with $r\geq 2p+2$. \\

The proof of Lemma \ref{propcor} is based on the properties of the set of resonant words with respects to length and depth.\\

The first equality easily follows from the fact that an element of depth $2p$ and length $2p$ is necessarily made of elements of depth $1$ corresponding to operators in $X_2$. \\

The second equality is a direct rewriting of the definition for a length $2$ contribution to the correction term of depth $2p$. We denoted by $\mathcal{W}(X_r)$ the set of weights coming from the component $X_r$ of $X$ and given by :
\begin{equation}
\lbrace \langle n , \lambda \rangle , n \in A(X) \rbrace .
\end{equation}

The first term comes from the following decomposition lemma: 

\begin{lem}
For every $r\geq 2$, $\mathcal{W} (X_r )$ can be decomposed in the following way :
\begin{align*}
\mathcal{W} (X_r )=\mathcal{W}^+(X_r) \cup \mathcal{W}^-(X_r ) \cup \mathcal{W}^0(X_r ),
\end{align*}
where $\mathcal{W}^+(X_r )$ is the set of positive weights coming from $X_r$, $\mathcal{W}^-(X_r)=-\mathcal{W}^+(X_r )$ and $\mathcal{W}_r^0(X)$ is the set of the zero weight.  
\end{lem}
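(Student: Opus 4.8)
The statement to prove is that for every $r \geq 2$, the set of weights $\mathcal{W}(X_r) = \{\langle n,\lambda\rangle : n\in A(X_r)\}$ decomposes as $\mathcal{W}^+(X_r)\cup\mathcal{W}^-(X_r)\cup\mathcal{W}^0(X_r)$, where $\mathcal{W}^-(X_r) = -\mathcal{W}^+(X_r)$ and $\mathcal{W}^0(X_r)$ collects the zero weight. Since the linear part has eigenvalues $\lambda = (i,-i)$, for a letter $n = (n^1,n^2) \in A(X_r)$ we have $\omega(n) = \langle n,\lambda\rangle = i(n^1 - n^2)$. So the whole claim reduces to a statement about the quantity $n^1 - n^2$ as $n$ ranges over $A(X_r)$: I must show this set of integers is symmetric about $0$.

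The plan is simply to read off $A(X_r)$ from Lemma \ref{letterXr}. The alphabet is $A(X_r) = \{(r,-1),\ (-1,r),\ (k-1,r-k)\ \text{for}\ k=1,\dots,r\}$. First I would compute the differences: for $(r,-1)$ we get $r-(-1) = r+1$; for $(-1,r)$ we get $-1-r = -(r+1)$; and for $(k-1,r-k)$ we get $(k-1)-(r-k) = 2k - r - 1$. As $k$ runs over $1,\dots,r$, the value $2k-r-1$ runs over $1-r,\ 3-r,\ \dots,\ r-3,\ r-1$, i.e. over all integers of the same parity as $r-1$ between $-(r-1)$ and $r-1$ inclusive. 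That set is manifestly symmetric under $k \mapsto r+1-k$, which sends $2k-r-1$ to $-(2k-r-1)$. Combined with the symmetric pair $\{r+1, -(r+1)\}$ coming from the two "edge" letters, the full set of differences is symmetric about $0$. Hence defining $\mathcal{W}^+(X_r)$ to be the subset of $A(X_r)$ yielding $\omega(n)$ with positive imaginary part, $\mathcal{W}^0(X_r)$ the subset yielding $\omega(n)=0$ (nonempty exactly when $r$ is odd, coming from $k = (r+1)/2$), and $\mathcal{W}^-(X_r)$ the rest, the involution $k \mapsto r+1-k$ together with $(r,-1)\leftrightarrow(-1,r)$ exhibits the bijection witnessing $\mathcal{W}^-(X_r) = -\mathcal{W}^+(X_r)$.

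There is essentially no obstacle here: the only thing to be careful about is the bookkeeping of the involution and confirming that the zero weight is correctly isolated (it occurs iff $2k-r-1=0$ has a solution $k\in\{1,\dots,r\}$, i.e. iff $r$ is odd). I would also remark that the edge letters $(r,-1)$ and $(-1,r)$ always contribute the nonzero pair $\pm i(r+1)$, so $\mathcal{W}^+(X_r)$ is always nonempty. This lemma is the combinatorial input feeding the length-$2$ analysis of $Carr_{2p,2}$ in Lemma \ref{propcor}: it is precisely the symmetry $\mathcal{W}^- = -\mathcal{W}^+$ that guarantees resonant words $n_1\cdot n_2$ of the form $\omega(n_1) = -\omega(n_2)$ exist and can be paired up.
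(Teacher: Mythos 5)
Your proof is correct and follows essentially the same route as the paper: you compute the weights $i(r+1)$, $-i(r+1)$ and $i(2k-r-1)$ from the alphabet of Lemma \ref{letterXr} and exhibit the symmetry via the involution $k\mapsto r+1-k$ (paired with swapping the two edge letters), which is exactly the pairing the authors use. The isolation of the zero weight at $k=(r+1)/2$ for odd $r$ also matches the paper's resonant-letter lemma.
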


This decomposition shows the interaction between each homogeneous component $X_l$ intervening in $Carr_{2p,2} (X)$. In length $2$, as any weight as its symmetric counterpart, we always have a contribution of $X_{p+1}$. \\

The third one follows directly from a computation:

\begin{lem}
A component $X_r$, $r\geq 2$, produces a resonant letter in $A(X)$ if and only if $r$ is odd. In this case, the letter is unique and given by $n_0=(\frac{r-1}{2}, \frac{r-1}{2})$.
\end{lem}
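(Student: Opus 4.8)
The plan is to read off the answer directly from the explicit alphabet $A(X_r)$ given in Lemma~\ref{letterXr}, after translating the resonance condition into an equation on the exponents. Recall that here $X_{lin}$ is diagonal with eigensystem $\lambda=(i,-i)$, so for a letter $n=(n^1,n^2)$ we have $\omega(n)=\langle n,\lambda\rangle=i(n^1-n^2)$; thus $n$ is resonant if and only if $n^1=n^2$. By Lemma~\ref{letterXr} the letters produced by $X_r$ are $(r,-1)$, $(-1,r)$ and the family $(k-1,r-k)$ with $k\in\{1,\dots,r\}$, and since the alphabets $A(X_r)$ are pairwise disjoint it suffices to test these letters for resonance.

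First I would discard the two ``boundary'' letters: since $r\geq 2$, neither $(r,-1)$ nor $(-1,r)$ has equal components, so they are never resonant. It remains to analyse the family $(k-1,r-k)$: such a letter is resonant exactly when $k-1=r-k$, i.e. when $2k=r+1$. Since $k$ must be an integer, this has a solution if and only if $r$ is odd; and when $r$ is odd the unique solution is $k=\tfrac{r+1}{2}$, which lies in $\{1,\dots,r\}$ because $1\le\tfrac{r+1}{2}\le r$ for $r\ge 1$, so the corresponding operator genuinely occurs in the prepared form of $X_r$. As the map $k\mapsto(k-1,r-k)$ is injective, this resonant letter is unique, and substituting $k=\tfrac{r+1}{2}$ gives $(k-1,r-k)=\bigl(\tfrac{r-1}{2},\tfrac{r-1}{2}\bigr)=n_0$, which finishes the proof.

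There is no genuine obstacle in this argument; it is a one-line parity computation. The only points requiring a little care are (i) checking that $\tfrac{r+1}{2}$ really belongs to the index set $\{1,\dots,r\}$, so that we are looking at one of the operators $O_{(k-1,r-k)}$ and not at the separate boundary operators $O_{(-1,r)}$, $O_{(r,-1)}$; and (ii) noting that the hypothesis on the parity of $r$ enters solely through the solvability in integers of $2k=r+1$. One may also record, as a consistency check, that $p(n_0)=\tfrac{r-1}{2}+\tfrac{r-1}{2}=r-1$, in agreement with the fact that every letter of $A(X_r)$ has $p$-value $r-1$.
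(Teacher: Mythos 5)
Your proposal is correct and follows essentially the same route as the paper: it invokes Lemma~\ref{letterXr} for the explicit alphabet, rules out the boundary letters $(r,-1)$ and $(-1,r)$, and reduces resonance to the integer solvability of $2k=r+1$, yielding the unique letter $n_0=(\frac{r-1}{2},\frac{r-1}{2})$ when $r$ is odd. The extra checks you record (that $\frac{r+1}{2}$ lies in $\{1,\dots,r\}$, and the consistency of the $p$-value) are harmless refinements of the same one-line parity argument.
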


\begin{proof}
Let $r \geq 2$ be fixed. By Lemma \ref{letterXr} we know the set of letters produced by $X_r$. The two letters $(-1,r)$ and $(r,-1)$ are never resonant. For the other ones given by $(k-1,r-k)$, one must solve the equation of resonance
\begin{equation}
2k-r-1=0,
\end{equation}
for $k=1,...,r$. This equation has a unique solution given by
\begin{equation}
k=\frac{r+1}{2},
\end{equation}
which is valid, as $k$ must be an integer, only when $r$ is odd.
\end{proof}

\subsection{Explicit computation and the fundamental Lemma}

We now explicit the quantities $Carr_p(X)$ when $X$ is a real Hamiltonian polynomial vector field. \\

Let $X$ a polynomial vector field in $\mathbb{C}^2$ of the form:
\begin{equation}
\label{geneform}
X=X_{lin}+ \di\sum_{r=2}^d \left ( P_r(x,y)\partial_x+Q_r(x,y)\partial_y 
\right ) ,
\end{equation}
where $P_r$ and $Q_r$ are homogeneous polynomials of degree $r$. 

\begin{lem}
The complex vector field (\ref{geneform}) corresponds to a real vector field if for all $r=2,\dots ,d$, we have 
\begin{equation}
\label{reality}
\overline{p_{i,j}}=q_{j,i} ,\ i=0,\dots ,r-1,\ j=r-i .
\end{equation}
\end{lem}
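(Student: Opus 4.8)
The plan is to verify directly that the reality condition stated is equivalent to the complex vector field $X$ coming from a genuine real planar vector field under the standard complexification $x = u+iv$, $\bar x = u-iv$. First I would recall the setup: a real planar vector field $\dot u = F(u,v)$, $\dot v = G(u,v)$ with $F,G$ real polynomials becomes, after the change of variables $z = u+iv$, the single complex equation $\dot z = F + iG =: \mathcal{P}(z,\bar z)$, and its complex conjugate $\dot{\bar z} = \overline{\mathcal{P}(z,\bar z)}$. Conversely, a system of the form $\dot x = \mathcal{P}(x,\bar x)$, $\dot{\bar x} = \mathcal{Q}(x,\bar x)$ describes a real vector field precisely when $\mathcal{Q}(x,\bar x) = \overline{\mathcal{P}(x,\bar x)}$, i.e. when the second component is obtained from the first by replacing every coefficient by its conjugate and swapping $x \leftrightarrow \bar x$; this is the coordinate-free statement of reality, and it is exactly what we must translate into the coefficients $p_{i,j}, q_{i,j}$.

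Next I would write out both sides explicitly. In the notation of the excerpt, $P_r(x,y) = \sum_{k=0}^{r} p_{k-1,r-k} x^k y^{r-k}$ (here $y$ plays the role of $\bar x$), so the homogeneous degree-$r$ part of the first component is $\sum_{i+j=r} p_{i,j} x^{i+1} y^{j}$ after reindexing with $i = k-1$, $j = r-k$; similarly $Q_r(x,y) = \sum q_{r-k,k-1} x^{r-k} y^k = \sum_{i+j=r} q_{i,j} x^{i} y^{j+1}$. The reality condition $\mathcal{Q} = \overline{\mathcal{P}}$ means $Q_r(x,y) = \overline{P_r}(y,x)$ for each $r$ (conjugating coefficients and swapping the two variables, since $\overline{x} = y$ and $\overline{y} = x$ under complexification). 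Taking $\overline{P_r}(y,x) = \sum_{i+j=r} \overline{p_{i,j}}\, y^{i+1} x^{j} = \sum_{i+j=r} \overline{p_{i,j}}\, x^{j} y^{i+1}$ and matching monomial by monomial with $Q_r(x,y) = \sum_{i+j=r} q_{i,j} x^i y^{j+1}$, the monomial $x^j y^{i+1}$ on the left pairs with $x^j y^{i+1}$ on the right when $q_{j,i}$ has first index $j$ and second index $i$; hence $q_{j,i} = \overline{p_{i,j}}$ for all $i+j=r$, which is precisely (\ref{reality}) after renaming. Conversely, if (\ref{reality}) holds, reversing this computation shows $\mathcal{Q} = \overline{\mathcal{P}}$, so $X$ comes from a real vector field.

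The only genuinely delicate point is bookkeeping of the index shifts: the coefficient $p_{i,j}$ is attached to the monomial $x^{i+1}y^j$ (note the $+1$ in the first slot because $P_r$ multiplies $\partial_x$), whereas $q_{i,j}$ is attached to $x^i y^{j+1}$, so the "swap $x \leftrightarrow \bar x$" operation interchanges the roles of these two shifts and this is exactly why the indices get transposed in (\ref{reality}). I would therefore carry out the monomial comparison carefully once and for all. I should also note the statement as given uses "if" rather than "if and only if"; I would either prove the equivalence (which the computation gives for free) or simply remark that the stated implication is the one needed later. This is a short, purely computational lemma with no real obstacle beyond the index management, and it requires nothing beyond the definitions of $P_r$, $Q_r$ and the standard complexification recalled above.
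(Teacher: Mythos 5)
Your proof is correct and follows essentially the same route as the paper, which simply observes that $\bar{x}=y$ and $Q_r(x,y)=\overline{P_r(x,y)}$ give $Q_r(x,y)=\overline{P_r}(y,x)$ and reads off the coefficient identity; you merely make the monomial-by-monomial matching explicit. (The minor index slip $i+j=r$ versus $i+j=r-1$ in your reindexing is inherited from the lemma's own statement and does not affect the argument.)
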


The proof follows easily from the fact that $\bar{x}=y$ and $Q_r (x,y)=\overline{P_r(x,y)}$ which gives $Q_r (x,y)=\overline{P_r}(y,x)$ for all $r=2,\dots ,d$.\\
 
Real Hamiltonian systems satisfy moreover the following conditions:

\begin{lem}
\label{hamiltoniancondition}
The complex vector field (\ref{geneform}) corresponds to a real Hamiltonian vector fields if conditions (\ref{reality}) are satisfied and moreover if for all $r=2,\dots ,d$, we have
\begin{equation}
p_{i-1,r-i} =-\di\frac{r-i+1}{i} \overline{p_{r-i,i-1}},\ \ i=1,\dots ,r.
\end{equation}
\end{lem}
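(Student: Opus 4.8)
The statement is a characterization of when the complex vector field \eqref{geneform}, already assumed to satisfy the reality conditions \eqref{reality}, actually comes from a Hamiltonian $H$. The plan is to write down the Hamiltonian condition directly and translate it into relations among the coefficients $p_{i,j}$. Recall that in the complex representation with $\bar x = y$, a vector field $X = A(x,y)\partial_x + B(x,y)\partial_y$ is Hamiltonian (with respect to the standard symplectic form, up to the conventional factor coming from $X_{lin}=i(x\partial_x - \bar x \partial_{\bar x})$) precisely when the appropriate divergence-type condition holds: in the real coordinates the Hamiltonian field is $X = -\partial_y H\,\partial_x + \partial_x H\,\partial_y$, so the defining constraint is $\partial_x(\partial_x H) = \partial_x(\partial_x H)$ trivially, but the real content is the integrability condition $\partial_x A + \partial_y B = 0$ after passing to the complex coordinates $(x,\bar x)$, i.e. $\partial_x P_r + \partial_{\bar x} Q_r = 0$ for each homogeneous degree $r$ (the linear part $X_{lin}$ being Hamiltonian for $H_{lin} = |x|^2$ up to normalization). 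So the first step is: state precisely the complex form of "$X$ is Hamiltonian," namely that each homogeneous piece $P_r\partial_x + Q_r\partial_y$ is itself Hamiltonian, which amounts to $\partial_x P_r(x,y) + \partial_y Q_r(x,y) = 0$ (here $y$ plays the role of $\bar x$).

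The second step is a routine monomial bookkeeping. Writing $P_r(x,y) = \sum_{i=1}^{r} p_{i-1,r-i} x^i y^{r-i}$ and, using the reality condition, $Q_r(x,y) = \overline{P_r}(y,x) = \sum q_{r-i,i-1} x^{r-i} y^i$ with $q_{r-i,i-1} = \overline{p_{i-1,r-i}}$, I would compute $\partial_x P_r$ and $\partial_y Q_r$, collect the coefficient of the monomial $x^{i-1}y^{r-i}$ in each, and set their sum to zero. From $\partial_x P_r$ the monomial $x^{i-1}y^{r-i}$ carries coefficient $i\, p_{i-1,r-i}$; from $\partial_y Q_r$, matching exponents forces the corresponding term to be $\partial_y(q_{r-i,i-1} x^{r-i} y^{i}) $ evaluated with the exponent of $x$ equal to $i-1$... more carefully: one reindexes so that the $x$-exponent of the surviving term in $\partial_y Q_r$ equals $i-1$, which picks out the coefficient $(r-i+1)\,q_{i-1,r-i}$ in the notation where $q_{i-1,r-i} = \overline{p_{r-i,i-1}}$. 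Equating gives $i\, p_{i-1,r-i} + (r-i+1)\,\overline{p_{r-i,i-1}} = 0$, i.e. exactly
\[
p_{i-1,r-i} = -\frac{r-i+1}{i}\,\overline{p_{r-i,i-1}}, \qquad i = 1,\dots,r,
\]
which is the asserted relation. One should also check the two "boundary" coefficients $p_{-1,r}$ (the pure $y^r$ term of $P_r$) and $q_{r,-1}$ (the pure $x^r$ term of $Q_r$): these are the cases $i=0$ and $i=r+1$ of the indexing, and the Hamiltonian identity for them is consistent with (indeed a special instance of) the same formula once one tracks that $\partial_x(p_{-1,r}y^r) = 0$ must cancel against the $x^{-1}$-free part — a quick check shows no extra constraint beyond the stated range is produced.

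The only genuine subtlety — and the step I expect to require the most care — is fixing the conventions: the exact normalization factor between $X_{lin} = i(x\partial_x - \bar x\partial_{\bar x})$ and the Hamiltonian $H = |x|^2$, and making sure the index shifts in the two polynomial conventions (the $p_{k-1,r-k}$ indexing of $P_r$ versus the $q_{r-k,k-1}$ indexing of $Q_r$) are matched consistently so that "Hamiltonian" really becomes $\partial_x P_r + \partial_y Q_r = 0$ and not some twisted variant. Once the convention is pinned down, everything reduces to comparing coefficients of like monomials, which is elementary; hence I would present the argument as: (1) recall $X$ Hamiltonian $\iff \partial_x P_r + \partial_y Q_r = 0$ for all $r$; (2) substitute the explicit expansions and the reality condition; (3) read off the coefficient of $x^{i-1}y^{r-i}$ and solve. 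I would keep the computation in step (2)–(3) terse, since it is the kind of monomial matching the reader can reconstruct, and spend the words instead on justifying step (1).
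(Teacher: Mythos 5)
Your proof is correct: the divergence condition $\partial_x P_r + \partial_y Q_r = 0$ is the right characterization of the Hamiltonian property in the complex coordinates, and the monomial matching (the $x^{i-1}y^{r-i}$ term of $\partial_x P_r$ against the $j=r-i+1$ term of $\partial_y Q_r$, using $q_{i-1,r-i}=\overline{p_{r-i,i-1}}$) yields exactly $i\,p_{i-1,r-i}+(r-i+1)\,\overline{p_{r-i,i-1}}=0$, which also reproduces the paper's examples for $X_2$ and $X_3$; your remark that the boundary coefficients $p_{-1,r}$ and $q_{r,-1}$ are unconstrained is likewise right, since they differentiate to zero. The paper states this lemma without proof, so there is no argument to contrast with, but the computation you outline is evidently the intended one.
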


We give some examples of relations between the coefficients in $X_2$ and $X_3$ :
\begin{exm}
For the vector field $X_2$ defined above, we have :
\begin{align*}
&p_{1,0}=\frac{-1}{2}\bar{p}_{0,1},\\
&p_{-1,2}=\bar{q}_{2,-1}.
\end{align*}
For the vector field $X_3$, we have :
\begin{align*}
&p_{2,0}=\frac{-1}{3}\bar{p}_{0,2}, \\
&p_{1,1}=-\bar{p}_{1,1}, \\
&p_{-1,3}=\bar{q}_{3,-1}.
\end{align*}
\end{exm}

Under the two previous conditions on the coefficients we have: 

\begin{lem}[Fundamental Lemma]
Let $X$ be a real Hamiltonian vector fields of the form $X=i(x \partial_x - y \partial_{y})+\underset{j=r}{\overset{2r-1}{\sum}}(P_j(x,y) \partial_x+\overline{P_j(x,y)}\partial_{y})$ with $\bar{x}=y$, then : 
\begin{equation*}
Carr_{2(r-1)}(X)=p_{r-1,r-1}+i\left( \underset{k=[\frac{r+1}{2}]+1}{\overset{r}{\sum}} \frac{r(r+1)}{(r-k+1)^2}\vert p_{k-1,r-k} \vert^2 +\frac{r}{r+1} \vert p_{-1,r} \vert ^2 \right).
\end{equation*}
\end{lem}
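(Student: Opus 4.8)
The plan is to compute $\mathrm{Carr}_{2(r-1)}(X)$ by splitting it according to the length of words, using the decomposition $\mathrm{Carr}_{2p}(X)=\mathrm{Carr}_{2p,1}(X)+\dots+\mathrm{Carr}_{2p,2p}(X)$ with $p=r-1$. The key observation is that for a vector field of the special form $X=X_{lin}+\sum_{j=r}^{2r-1}X_j$, every homogeneous operator $B_n$ has depth $p(n)=j-1\geq r-1$, so a word $\mathbf n$ of length $\ell$ has depth $p(\mathbf n)\geq \ell(r-1)$. Hence $p(\mathbf n)=2(r-1)$ with $\ell\geq 2$ forces $\ell=2$ and both letters to have depth exactly $r-1$, i.e. to come from $X_r$. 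So only two terms survive: the length-one term $\mathrm{Carr}_{2(r-1),1}(X)$ and the length-two term $\mathrm{Carr}_{2(r-1),2}(X)$, and the latter only involves $X_r$.

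For the length-one term, Lemma \ref{propcor} gives $\mathrm{Carr}_{2p,1}(X)=B_{p,p}$; concretely, by the last Lemma before the statement, $X_r$ (with $r$ odd, say; when $r$ is even there is no resonant letter and this term vanishes) produces the single resonant letter $n_0=(\frac{r-1}{2},\frac{r-1}{2})$, and by Lemma 6 (the length-one value of $Carr^\bullet$) its contribution is $Carr^{n_0}B_{n_0}=B_{n_0}$, which is exactly the $p_{r-1,r-1}$ term. One should check this matches the Hamiltonian normalization; note $p_{\frac{r-1}{2}-1,\frac{r-1}{2}}$-type coefficients are tied by Lemma \ref{hamiltoniancondition}, and in fact $p_{r-1,r-1}$ is purely imaginary (as in the $X_3$ example $p_{1,1}=-\bar p_{1,1}$), consistent with appearing as the real analogue of a period constant. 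For the length-two term I would enumerate the resonant words $n\cdot n'$ built from letters of $A(X_r)$: by the weight decomposition Lemma, resonance forces $\omega(n')=-\omega(n)$, so we pair each positive-weight letter with its negative-weight partner and vice versa, plus we must account for the fact that the zero-weight letter $n_0$ paired with itself has depth $2(r-1)$ too but length $2$ — however $\omega(n_0\cdot n_0)=0$ with $\omega(n_0)=0$, so by Lemma 5 item 3) $Carr^{n_0 n_0}=0$. For each genuinely resonant pair with $\omega(n)\neq 0$, Lemma 6 item 2) gives $Carr^{n\cdot n'}=-1/\omega(n)$ and $Carr^{n'\cdot n}=1/\omega(n)=-1/\omega(n')$.

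The remaining work is to assemble $\sum_{\omega(n)\neq0} \big(Carr^{n n'}B_{n n'}+Carr^{n' n}B_{n' n}\big)$ into the stated sum. The letters of $A(X_r)$ are $(k-1,r-k)$ for $k=1,\dots,r$ together with $(r,-1)$ and $(-1,r)$; their weights are $\omega((k-1,r-k))=i(2k-r-1)$ and $\omega((r,-1))=i(r+1)$, $\omega((-1,r))=-i(r+1)$. Pairing $(k-1,r-k)$ with its conjugate-weight partner $(r-k,k-1)$, and $(r,-1)$ with $(-1,r)$, one computes $B_{n n'}+B_{n' n}$ — a commutator-type combination giving a diagonal operator $(xy)^{r-1}(P x\partial_x+Q y\partial_y)$ by Lemma \ref{crochet} — and the coefficient $-1/\omega(n)$ turns the product $p_{\cdot}q_{\cdot}=|p_\cdot|^2$ (via the reality condition $q_{j,i}=\overline{p_{i,j}}$) into the terms $\frac{r(r+1)}{(r-k+1)^2}|p_{k-1,r-k}|^2$ and $\frac{r}{r+1}|p_{-1,r}|^2$. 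The factor $r(r+1)$ and the square in the denominator should emerge from combining $1/\omega(n)=1/(i(2k-r-1))$ with the Hamiltonian relation of Lemma \ref{hamiltoniancondition} expressing $p_{r-k,k-1}$ in terms of $\overline{p_{k-1,r-k}}$, and from the action of $B_n\circ B_{n'}$ on the monomial $x^{r-1}y^{r-1}$ (the eigenvalue $\beta_n^{l,k}$ producing linear-in-degree factors).

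The main obstacle I expect is the bookkeeping in the length-two computation: correctly matching each letter with its weight-symmetric partner, computing the eigenvalues of the composed operators $B_n\circ B_{n'}$ acting on monomials, and then using both the reality condition (\ref{reality}) and the Hamiltonian condition of Lemma \ref{hamiltoniancondition} to collapse the mixed products $p q$ into squared moduli with the exact rational coefficients $\frac{r(r+1)}{(r-k+1)^2}$ and $\frac{r}{r+1}$. Keeping track of the two sign conventions ($Carr^{nn'}=-Carr^{n'n}$) and verifying that the index range $k=[\frac{r+1}{2}]+1,\dots,r$ is precisely the set of $k$ for which the pair $\{(k-1,r-k),(r-k,k-1)\}$ is unordered-distinct with positive weight on the first entry (the middle term $k=\frac{r+1}{2}$ being the resonant letter already counted in length one) is the delicate combinatorial point; everything else is routine mould recursion plus the explicit values of $C_1,C_2$.
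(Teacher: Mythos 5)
Your proposal follows essentially the same route as the paper's proof: the depth count forces only length-one and length-two words to contribute to $Carr_{2(r-1)}$, the length-two contribution reduces to the resonant pairs built from $A(X_r)$ evaluated with the explicit value of $C_2$ and the reality/Hamiltonian conditions, and the length-one contribution is the single resonant operator $B_{(r-1,r-1)}$, so the plan is sound. One slip to correct before executing it: the resonant letter supplying the $p_{r-1,r-1}$ term comes from $X_{2r-1}$, whose degree $2r-1$ is odd for every $r$, so your parenthetical claim that this term vanishes when $r$ is even misapplies the resonance lemma (which you should apply to $X_{2r-1}$, not to $X_r$) — the $p_{r-1,r-1}$ term is present unconditionally, exactly as the identity $Carr_{2p,1}(X)=B_{p,p}$ from Lemma~\ref{propcor}, which you also cite, already guarantees.
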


\begin{proof}
The computation of the correction in depth $2(k-1)$ requires to know which operators appear in it and the alphabet. In this depth, there are just the homogeneous differential operators from the polynomial $P_r$ and  the resonant letter from the polynomial $P_{2r-1}$. \\
The alphabet given by $P_r$ is $A(X_r)=\lbrace (r,-1), (-1,r) , (k-1,r-k) \ with \ k=1,...,r \rbrace .$ \\
We also have the homogeneous differential operators : 
\begin{equation*}
B_{(-1,r)}=p_{-1,r}y^r\partial_x, \\
B_{(r,-1)}=q_{r,-1}x^r \partial_y, \\
B_{(k-1,r-k)}=x^{k-1}y^{r-k}(p_{k-1,r-k}x\partial_x+q_{k-1,r-k}y\partial_y )
\end{equation*}
with $k \in \lbrace 1,...,r \rbrace$.  \\
Here the resonant words are :
\begin{align*}
&((k-1,r-k),(r-k,k-1)), \\
&((r-k,k-1),(k-1,r-k)), \\
&((-1,r),(r-1)), \\
&((r,-1),(-1,r)).
\end{align*}
So the Lie brackets associated, using the real and Hamiltonian conditions, are: 
\begin{align*}
[B_{(k-1,r-k)},B_{(r-k,k-1)}]&=\frac{r(r+1)(2k-r-1)}{(r-k+1)^2}\vert p_{k-1,r-k} \vert ^2(xy)^{r-1}(x \partial_x-y \partial_y)\\
&=-[B_{(r-k,k-1)},B_{(k-1,r-k)}], \\
[B_{(-1,r)},B_{(r,-1)}]&=-r\vert p_{-1,r} \vert^2(xy)^{r-1}( x \partial_x - y \partial_y)=- [B_{(r,-1)},B_{(-1,r)}].
\end{align*}
The mould of the correction in this resonant word are:
\begin{align*}
Carr^{(k-1,r-k)\cdot (r-k,k-1)}&=\frac{i}{2k-r-1}, \\
Carr^{(-1,r)\cdot (r,-1)}&=\frac{-i}{r+1}.
\end{align*}
The only resonant letter in $P_{2k-1}$ is $p_{r-1,r-1}$ with the operator $B_{(r-1,r-1)}=p_{r-1,r-1}(xy)^{r-1}( x \partial_x - y \partial_y).$
Using the alternality of the mould $Carr^\bullet$ and the skew-symmetric of the Lie brackets, we get the formula : 
\begin{equation*}
Carr_{2(k-1)}(X)=p_{r-1,r-1}+i\left( \underset{k=[\frac{r+1}{2}]+1}{\overset{r}{\sum}} \frac{r(r+1)}{(r-k+1)^2}\vert p_{k-1,r-k} \vert^2 +\frac{r}{r+1} \vert p_{-1,r} \vert ^2 \right).
\end{equation*}
\end{proof}

\subsection{Examples of computations}
We will give in this section a few examples of computations of the correction.

\subsubsection{The quadratic case}
We consider the real Hamiltonian vector field :
\begin{align*}
X=X_{lin}+X_2
\end{align*}
 where $X_2=(p_{1,0}x^2+p_{0,1}xy+p{-1,2}y^2)\partial_x+(q_{2,-1}x^2+q_{0,1}xy+q_{1,0})\partial_y$. \\
 Using the decomposition in homogeneous differential operators, we obtain the following operators :
 \begin{align*}
 &B_{(1,0)}=x(p_{1,0}x \partial_x+q_{1,0}y\partial_y),\\
 &B_{(0,1)}=y(p_{0,1}x\partial_x+q_{0,1}y\partial_y), \\
 &B_{(-1,2)}=p_{-1,2}y^2\partial_x, \\
 &B_{(2,-1)}=q_{2,-1}x^2 \partial_y.
 \end{align*}
 We also have the alphabet : $A(X_2)= \lbrace (1,0), (0,1), (2,-1), (-1,2) \rbrace$.
 
 The first none trivial correction is in depth 2, because we don't have any resonant letter.
 
So, the correction in depth 2 is given by:
\begin{align*}
Carr_2(X)&=Carr_{2,2}(X)\\
&=\underset{\omega(\textbf{n})=0}{\underset{p(\textbf{n})=2}{\underset{\textbf{n}\in A^*(X)}{\sum}}}Carr^{\textbf{n}}B_{\textbf{n}} \\
&=\underset{\omega(\textbf{n})=0}{\underset{p(\textbf{n})=2}{\underset{\textbf{n}\in A^*(X)}{\sum}}}\frac{1}{\ell(\textbf{n})} Carr^{\textbf{n}}B_{[\textbf{n}]} \\
&=\frac{1}{2}\left( Carr^{(1,0)\cdot (0,1)} B_{[(1,0)\cdot (0,1)]} + Carr^{(0,1)\cdot (1,0)}B_{[(0,1)\cdot (1,0)]} \right. \\
& \left. +Carr^{(2,-1)\cdot (-1,2)} B_{[(2,-1)\cdot (-1,2)]}+
Carr^{(-1,2)\cdot (2,-1)} B_{[(-1,2)\cdot (2,-1)]} \right).
\end{align*}
 
By the Fundamental Lemma, we finally have : 
 \begin{align*}
 Carr_2(X)=i\left( 6 |p_{1,0}|^2+ \frac{2}{3} |p_{-1,2}|^2\right).
 \end{align*}
 
\subsubsection{The cubic case}
We consider the real Hamiltonian vector field :
\begin{align*}
X=X_{lin}+X_2+X_3
\end{align*}
with $X_2=(p_{1,0}x^2+p_{0,1}xy+p{-1,2}y^2)\partial_x+(q_{2,-1}x^2+q_{0,1}xy+q_{1,0})\partial_y$ and $X_3=(p_{2,0}x^3+p{1,1}x^2y+p_{0,2}xy^2+p_{-1,3}y^3)\partial_x+(q_{3,-1}x^3+q_{2,0}x^2y+q_{1,1}xy^2+q_{0,2}y^3)\partial_y$. \\
As above, the first none trivial correction is in depth 2 :
\begin{align*}
Carr_2(X)&=Carr_{2,1}(X_3)+Carr_{2,2}(X_2).
\end{align*}
The only operator which is of depth 2 from $X_3$ is given by its resonant letter $(1,1)$ and the operator $B_{1,1}=xy(p_{1,1}x \partial_x+ q_{1,1}y \partial_y$.
So, using the previous result on the quadratic case, we have :
\begin{align*}
Carr_2(X)=p_{1,1}+i \left(  6 |p_{1,0}|^2+ \frac{2}{3} |p_{-1,2}|^2\right).
\end{align*}
In depth 4, we have :
\begin{align*}
Carr_4(X)&=Carr_{4,2}(X_3,X_3)+Carr_{4,3}(X_3,X_2,X_2)++Carr_{4,4}(X_2,X_2,X_2,X_2).
\end{align*}
The different $Carr_{i,j}(X)$ are given by :
\begin{align*}
&Carr_{4,2}(X_3,X_3)=i( 12 |p_{2,0}|^2+\frac{3}{4}|p_{-1,3}|^2), \\
&Carr_{4,3}(X_3,X_2,X_2)=-i( 120Im(p_{2,0}\bar{p}_{1,0}^2)+\frac{26}{3}Im(\bar{p}_{-1,3}p_{-1,2}\bar{p}_{1,0})+40Im(p_{2,0}p_{-1,2}p_{1,0}))\\
&Carr_{4,4}(X_2,X_2,X_2,X_2)=i\left(-144|p_{1,0}|^4+12|p_{1,0}|^2|p_{-1,2}|^2-\frac{8}{9}|p_{-1,2}|^4+40Re(p_{-1,2}p_{0,1}^3) \right).
\end{align*}
\subsubsection{The quartic case}
We consider the real Hamiltonian vector field :
\begin{align*}
X=X_{lin}+X_2+X_3+X_4
\end{align*}
with $X_2$ and $X_3$ as above and $X_4=(p_{3,0}x^4+p_{2,1}x^3y+p_{1,2}x^2y^2+p_{0,3}xy^3+p_{-1,4}y^4)\partial_x+(q_{4,-1}x^4+q_{3,0}x^3y+q_{2,1}x^2y^2+q_{1,2}xy^3+q_{0,3}y^4)\partial_y$. \\
The correction in depth 2 is the same as the cubic case. \\
In depth 4, we have :
\begin{align*}
Carr_4(X)&=Carr_{4,2}(X_4,X_2)+Carr_{4,2}(X_3,X_3)\\
& +Carr_{4,3}(X_3,X_2,X_2)+Carr_{4,4}(X_2,X_2,X_2,X_2),
\end{align*}
where $Carr_{4,2}(X_4,X_2)$ is given by : 
\begin{align*}
Carr_{4,2}(X_4,X_2)=i\left(12 Re(p_{2,1}\bar{p}_{1,0})+8Re(p_{3,0}p_{-1,2})\right).
\end{align*}

\subsubsection{Maple program}
For the interested readers, we can send some Maple program to compute the correction of a polynomial vector fields.
\newpage

\part{The isochronous center affine variety}
\label{geometry}
\setcounter{section}{0}

In this part, we prove that the set of isochronous center is a rational affine variety which is invariant under a non trivial $\C^*$ action. This affine variety is moreover explicitly described. We also give estimates on the growth of the degree of each rational polynomials entering in this description as well as the growth of the rational coefficients.

\section{Affine variety of isochronous center}

We consider real vector fields written in complex form as $X=X_{lin}+P(x,y)\partial_x+Q(x,y)\partial_y$ where $P$ and $Q$ are polynomials with coefficients in $\mathbb{C}$ such that $\overline{P(x,y)}=Q(y,x)$. We denote by $N(d)$ the number of independent coefficients defining $P$ and by $\mathbf{p}$ any element of this set. By the reality condition, the coefficient of $Q$ can be deduced from those of $P$. We then identify the set of complex polynomials of a given degree $d$ with $\C^{N(d)}$, where $N(d)$ is given by $N(d)=\di\frac{(d-1)(d+4)}{2}$.\\  

We denote by $\mathscr{L}_d$ the set of polynomial perturbations $(P,Q)$ of degree $d$ such that $X$ is linearizable. The set $\mathscr{L}$ can be seen as a subset of $\C^{N(d)}$. Precisely, we have :

\begin{theorem}[Geometric structure]
\label{algebraic}
For all $d\geq 2$, the set $\mathscr{L}_d$ of isochronous centers is an affine variety over $\mathbb{Q}$ in $\C^{N(d)}$.
\end{theorem}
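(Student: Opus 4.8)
The plan is to exploit the explicit criterion for linearizability provided by Theorem \ref{caractiso}, namely that $X$ is linearizable if and only if $Carr_p(X)=0$ for all $p\geq 1$. First I would observe that, since $X$ has degree $d$, Lemma \ref{propcor} shows that the correction term $Carr_{2p}(X)$ involves only the homogeneous components $X_r$ with $r\leq 2p$, and in fact the coefficients of $X_r$ for $r\leq d$; once $2p$ exceeds a bound depending on $d$, the new contribution $Carr_{2p,1}(X)=B_{p,p}$ coming from $X_{2p+1}$ vanishes because $X$ has no component of degree $2p+1$, and similarly every term in $Carr_{2p,j}(X)$ factors through differential operators attached to letters in $A(X)=\cup_{r=2}^d A(X_r)$. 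The key structural input is the universality theorem: each $Carr^{\bn}$ with $l(\bn)=r$ and $\omega(\bn)=0$ equals $C_r(\omega(n_1),\dots,\omega(n_r))$, a fixed rational number (the functions $C_1,C_2,C_3$ given explicitly, and the variance formula showing all $C_r$ are rational functions of the weights, which here are Gaussian integers times $i$). Hence, after collecting the operator basis, each scalar equation "$(\text{coefficient of a given }[B_{\bn}]\text{ in }Carr_{2p}(X))=0$" is a polynomial equation in the $N(d)$ complex coordinates $\mathbf{p}$ with coefficients in $\mathbb{Q}$ (or $\mathbb{Q}(i)$, but see below).

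Next I would argue that only finitely many of these equations are nontrivial. Since $Carr_{2p,j}(X)$ is built from $B_{\bn}$ with $p(\bn)=2p$, $l(\bn)=j$, and each letter has depth at least one, we have $j\leq 2p$; moreover each letter in $A(X)$ has depth at most $d-1$, and a resonant word of depth $2p$ built from letters of $X_{r_1},\dots,X_{r_j}$ satisfies $\sum(r_m-1)=2p$. The operators $B_{\bn}$ appearing all lie in $Der_{2p}(\mathbb{C}^2)$, a finite-dimensional space, so $Carr_{2p}(X)=0$ is equivalent to finitely many polynomial equations; and because the letters available are fixed by $d$, for $p$ large the monomials in $\mathbf{p}$ entering $Carr_{2p}(X)$ are polynomial of bounded degree in $\mathbf{p}$ — in fact one shows that the ideal generated by all the $Carr_{2p}(X)$, $p\geq 1$, is already generated by those with $2p$ below an explicit threshold (Noetherianity of $\C[\mathbf{p}]$ guarantees finiteness abstractly, and the degree/depth bookkeeping of Lemma \ref{propcor} makes it effective). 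Thus $\mathscr{L}_d$ is the common zero locus in $\C^{N(d)}$ of finitely many polynomials with rational coefficients, i.e. an affine variety over $\mathbb{Q}$.

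The remaining point is to check that the defining polynomials may be taken with coefficients in $\mathbb{Q}$ rather than merely $\mathbb{Q}(i)$. Here I would use the reality and Hamiltonian structure: by Lemma \ref{hamiltoniancondition} the coordinates $\mathbf{p}$ are subject to conditions making the relevant combinations real, and by Lemma \ref{crochet} every resonant Lie bracket has the shape $(xy)^{p/2}(P_{\bn}x\partial_x+Q_{\bn}y\partial_y)$; combining this with the fact (illustrated by the Fundamental Lemma and the worked examples in Section 3, where $Carr_2(X)$, $Carr_4(X)$ come out as $\mathbb{Q}$-linear combinations of $p_{k,k}$ and of real quantities like $|p_{i,j}|^2$, $Re(p_{i,j}\bar p_{k,l})$, $Im(\cdots)$) shows that, written in terms of the real and imaginary parts of the $\mathbf{p}$, every equation $Carr_{2p}(X)=0$ has coefficients in $\mathbb{Q}$. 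So $\mathscr{L}_d$ is cut out by polynomials over $\mathbb{Q}$.

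The main obstacle I anticipate is the effectivity/finiteness step: showing rigorously that only finitely many depths $p$ contribute independent equations, and pinning down the bound, requires a careful analysis of which resonant words of depth $2p$ can be formed from the fixed alphabet $A(X)$ and of how the corresponding $B_{\bn}$ populate $Der_{2p}(\C^2)$ — the naive argument only gives abstract Noetherian finiteness, not an explicit description, and the latter is presumably what the "explicitly described" claim in the part's introduction refers to. The universality theorem is exactly what tames this, since it decouples the (finitely many, fixed) rational numbers $C_r(\dots)$ from the variable coefficients $\mathbf{p}$.
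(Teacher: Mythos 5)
Your proposal follows essentially the same route as the paper: characterize $\mathscr{L}_d$ as the common zero set of the correction coefficients $Ca_{2p}(\mathbf{p})$, use the algebraic/universal structure of the correction mould to see that these are polynomials in $\mathbf{p}$ with coefficients in $\Q$ or $i\Q$ (hence cutting out a variety over $\Q$), and invoke the Hilbert Basis Theorem for finiteness of the defining equations. The only caveat is your side remark that the monomials entering $Carr_{2p}(X)$ have bounded degree in $\mathbf{p}$ for large $p$ --- in fact $Ca_{2p,2p}$ has degree $2p$ --- but this does not affect the argument, since, exactly as in the paper, you ultimately rely on Noetherianity of $\C[\mathbf{p}]$ rather than on any degree bound.
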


The proof is based on a precise description of the algebraic form of the correction. For all $\bn \in A^* (X)$, let us denote by $P(\bn )$ and $Q(\bn )$ the coefficients given by Lemma \ref{crochet} and satisfying 
\begin{align*}
[B_{\textbf{n}}]=(xy)^{\frac{p(\textbf{n})}{2}}(P(\textbf{n})x\partial_x+Q(\textbf{n})y\partial_y).
\end{align*}

We have :

\begin{theorem}[Algebraic structure]
\label{rational}
For all $p\in \N^*$ the correction term $Carr_{2p} (X)$ has the form 
\begin{equation}
Carr_{2p} (X) = (xy)^{p} 
\left [ 
Ca_{2p} (\mathbf{p}) x\partial_x +\overline{Ca_{2p} (\mathbf{p})} y \partial y 
\right ] ,
\end{equation}
where 
\begin{equation}
\label{formulacar}
Ca_{2p} (\mathbf{p}) =\di\sum_{i=1}^{2p} \di\frac{1}{i!} Ca_{2p,i} (\mathbf{p}) ,
\end{equation}
with
\begin{equation}
Ca_{2p,i} (\mathbf{p}):= \underset{p(\bn )=2p,\, l(\bn )=i}{\underset{\bn\in A^*(X)}{\sum}} Carr^{\bn} P(\bn) ,\ \ i=1,\dots ,2p.
\end{equation}

The quantities $Ca_{2p,i} (\mathbf{p})$, $i=1,\dots ,2p$, are explicit polynomials of degree $i$ in the coefficients of $P$ with coefficients in $\Q$ if $i$ is even and $i\Q$ otherwise. Moreover, these polynomials can be computed algorithmically using recursive formula. 
\end{theorem}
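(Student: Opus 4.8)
The plan is to unpack the definitions and reduce everything to the mould expansion of the correction together with Lemma \ref{crochet} and the recursive Variance formula. Starting from $Carr(X)=\sum_{\bn\in A^*(X)}Carr^{\bn}B_{\bn}$ and the projection/depth reorganization carried out earlier in the excerpt, we have $Carr_{2p}(X)=\sum_{p(\bn)=2p}Carr^{\bn}B_{\bn}$, which by alternality of $Carr^\bullet$ and the projection theorem may be rewritten as $\sum_{i=1}^{2p}\frac{1}{i!}\sum_{p(\bn)=2p,\,l(\bn)=i}Carr^{\bn}[B_{\bn}]$ (the $1/i!$ in place of the $1/i$ of the general projection theorem being an artefact of how one regroups the $i!$ orderings of a fixed set of letters into a single iterated bracket; this bookkeeping is the one point that needs to be written carefully). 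Then Lemma \ref{crochet} tells us that for each resonant word $\bn$ of depth $2p$, $[B_{\bn}]=(xy)^p(P(\bn)x\partial_x+Q(\bn)y\partial_y)$, and since only resonant words contribute (Lemma on $Carr^{\bn}=0$ for non-resonant $\bn$), factoring out $(xy)^p$ gives the announced form with $Ca_{2p}(\mathbf{p})=\sum_{i=1}^{2p}\frac{1}{i!}Ca_{2p,i}(\mathbf{p})$ and $Ca_{2p,i}(\mathbf{p})=\sum_{p(\bn)=2p,\,l(\bn)=i}Carr^{\bn}P(\bn)$.

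The conjugation symmetry of the coefficient of $y\partial_y$ — i.e. that it equals $\overline{Ca_{2p}(\mathbf{p})}$ — is where the \emph{real Hamiltonian} hypothesis is used. First, for a real vector field $X$ we have $X=\overline{X}$ in the complexified sense ($\bar x=y$), so $Carr(X)$ is itself real, forcing the coefficient of $y\partial_y$ in $Carr_{2p}(X)$ to be the conjugate of the coefficient of $x\partial_x$; alternatively, one checks directly from Lemma \ref{crochet} and the reality condition \eqref{reality} that $Q(\bn)=\overline{P(\bar\bn)}$ where $\bar\bn$ is the word obtained by swapping the two components of each letter, and that $Carr^{\bar\bn}=\overline{Carr^{\bn}}$ (the weights $\omega(n_j)$ being purely imaginary and getting negated under the swap, while the universal functions $C_r$ have real coefficients), so summing over $\bn$ pairs the two terms.

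It remains to establish the arithmetic statement: $Ca_{2p,i}(\mathbf{p})$ is a polynomial of degree exactly $i$ in the coefficients of $P$, with coefficients in $\Q$ if $i$ is even and in $i\Q$ if $i$ is odd, and is algorithmically computable. Degree $i$ is immediate: $P(\bn)$ is, by expanding the iterated bracket $[B_{\bn}]$ with $l(\bn)=i$, a product of exactly $i$ coefficients $p_{\cdot,\cdot}$ up to an integer factor coming from the differentiations, and $Carr^{\bn}$ is a scalar independent of $\mathbf{p}$. The rationality/parity claim is the main obstacle and the delicate part: by the Universality theorem, $Carr^{\bn}=C_r(\omega(n_1),\dots,\omega(n_r))$ with the $\omega(n_j)\in i\Z$ purely imaginary, so from the explicit shape of $C_1,C_2,C_3$ (and, in general, from the Variance formula, which expresses $C_r$ recursively as sums of products of lower $C$'s divided by partial weight sums $\omega(n_1)+\cdots+\omega(n_j)$) one sees that $Carr^{\bn}\in i^{?}\Q$ with the power of $i$ controlled by the length; combined with the explicit integer $i^{?}$-valued prefactors in $P(\bn)$ coming from the brackets, one tracks the power of $i$ through the product $Carr^{\bn}P(\bn)$ and checks it lands in $\Q$ for $i$ even and $i\Q$ for $i$ odd. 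I would prove the needed statement "$Carr^{\bn}\in i^{-p(\bn)}\Q$ for resonant $\bn$" (or the analogous precise bookkeeping) by induction on length using the Variance formula, which is exactly the kind of claim that is clean to state but fussy to verify, since one must simultaneously control that all the partial-weight denominators are nonzero (again guaranteed by restricting to the resonant, properly-nested words where $C_r$ is defined) and that the $i$-powers combine correctly; algorithmic computability is then automatic from the recursion.
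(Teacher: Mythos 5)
Your proposal is correct and follows essentially the same route as the paper: the projection theorem plus Lemma \ref{crochet} give the $(xy)^p$-factored form (this is the whole of the paper's appendix computation), Lemma \ref{polycro} gives the degree-$i$ integer-coefficient polynomials $P(\bn )$, and the arithmetic nature of $Carr^{\bn}$ is obtained, exactly as you propose, by induction on the length via the recursive Variance formula (the paper's Lemma \ref{valcorrec}); you are in fact more explicit than the paper on the two points it leaves implicit, namely the $1/i$ versus $1/i!$ normalisation and the reason why $\sum Carr^{\bn}Q(\bn )=\overline{\sum Carr^{\bn}P(\bn )}$. One caution: the relevant power of $i$ is governed by the length $l(\bn )$ rather than the depth (so not ``$i^{-p(\bn )}$''), and carrying the bookkeeping through as in Lemma \ref{valcorrec} and the worked examples (e.g.\ $Carr_{2,1}=p_{1,1}$, $Carr_{2,2}\in i\Q[\mathbf{p}]$) yields coefficients in $\Q$ for $i$ odd and in $i\Q$ for $i$ even --- the parity opposite to the one printed in the theorem statement, which you should not attempt to reproduce literally.
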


The proof of this theorem is a consequence of two results. First, nested Lie brackets have a very special shapes which can be easily computed. Precisely, we have :

\begin{lem} 
\label{polycro}
For all $\bn \in A^*(X)$, the coefficients $P(\bn )$ and $Q(\bn )$ are polynomials in $\mathbb{Z}[\C^{N(d)}]$ of degree $l(\bn )$ and defined recursively on the length of $\bn$ by 
\begin{equation}
\left .
\begin{array}{ll}
P (n\bn )&=(\mid \bn \mid^1 -n^1) p_n P(\bn ) +\mid \bn \mid^2 q_n P( \bn )- n^2 p_n Q(\bn ), \\ 
Q (n\bn )&=(\mid \bn \mid^2-n^2) q_n Q(\bn ) +\mid \bn \mid ^1 p_n Q(\bn )-n^1 q_n P(\bn ).
\end{array}
\right .
\end{equation}
where for $\textbf{n}=n_1\cdot ... \cdot n_r \in A^*(X)$, we let $|n|^j=n_1^j+...+n^j_r$, $j=1,2$ with $n_i=(n^1_i,n^2_i)$.
\end{lem}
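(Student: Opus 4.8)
I would argue by induction on the length $l(\bn)$; the whole content of the lemma is a one-line formula for the Lie bracket of two ``monomial vector fields'' of the shape $x^{a}y^{b}(\alpha\, x\partial_x+\beta\, y\partial_y)$. First I would note that every generator $B_n$ is already of this shape: by Lemma~\ref{letterXr} one has $B_n=x^{n^1}y^{n^2}(p_n x\partial_x+q_n y\partial_y)$, where for the two boundary letters $n=(r,-1)$ and $n=(-1,r)$ one of the coefficients $p_n,q_n$ vanishes and the corresponding exponent $n^1$ or $n^2$ equals $-1$.

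Then I would record the engine. For $V_i=x^{a_i}y^{b_i}(\alpha_i x\partial_x+\beta_i y\partial_y)$, $i=1,2$, writing $g_i=x^{a_i}y^{b_i}$ and $E_i=\alpha_i x\partial_x+\beta_i y\partial_y$, one has $[V_1,V_2]=g_1E_1(g_2)E_2-g_2E_2(g_1)E_1+g_1g_2[E_1,E_2]$, and $[E_1,E_2]=0$ because $x\partial_x$ and $y\partial_y$ commute and the $E_i$ have constant coefficients; since $E_i(g_j)=(\alpha_i a_j+\beta_i b_j)g_j$ this gives
\[
[V_1,V_2]=x^{a_1+a_2}y^{b_1+b_2}\bigl[(\alpha_1 a_2+\beta_1 b_2)(\alpha_2 x\partial_x+\beta_2 y\partial_y)-(\alpha_2 a_1+\beta_2 b_1)(\alpha_1 x\partial_x+\beta_1 y\partial_y)\bigr].
\]
Thus this class of operators is stable under bracketing and the exponents simply add. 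Feeding this into the nested brackets $[B_\bn]$ inductively reproves Lemma~\ref{crochet} and, more precisely, shows that for every word $\bn=n_1\cdots n_r$ one has $[B_\bn]=x^{|\bn|^1}y^{|\bn|^2}\bigl(P(\bn)x\partial_x+Q(\bn)y\partial_y\bigr)$ with $|\bn|^j=n_1^j+\cdots+n_r^j$; on resonant words $|\bn|^1=|\bn|^2=p(\bn)/2$, which is the shape in Lemma~\ref{crochet}, so this extends the definition of $P(\bn),Q(\bn)$ to all of $A^*(X)$.

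Now the induction itself. The base case $l(\bn)=1$ is immediate: $[B_n]=B_n$, hence $P(n)=p_n$, $Q(n)=q_n$, homogeneous of degree $1$ with coefficient $1$. For the inductive step, unwinding the nesting convention one has $[B_{n\bn}]=[B_n,[B_\bn]]$; I would apply the displayed identity with $V_1=B_n$ (so $a_1=n^1$, $b_1=n^2$, $\alpha_1=p_n$, $\beta_1=q_n$) and $V_2=[B_\bn]$ (so $a_2=|\bn|^1$, $b_2=|\bn|^2$, $\alpha_2=P(\bn)$, $\beta_2=Q(\bn)$ by the induction hypothesis), then read off the coefficients of $x\partial_x$ and $y\partial_y$ in the resulting monomial vector field. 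This produces exactly the two stated recursions, and $|n\bn|^j=|\bn|^j+n^j$ keeps the exponent bookkeeping consistent with the induction hypothesis.

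Finally the degree and integrality assertions fall out of the recursion: the structure constants $|\bn|^1-n^1,\ |\bn|^2,\ n^2$ (resp. $|\bn|^2-n^2,\ |\bn|^1,\ n^1$) are integers since every letter has integer components, and each step multiplies the degree-$l(\bn)$ polynomials $P(\bn),Q(\bn)$ by exactly one new coefficient $p_n$ or $q_n$; hence $P(n\bn)$ and $Q(n\bn)$ are $\mathbb{Z}$-linear combinations of monomials of degree $l(\bn)+1=l(n\bn)$ in the coefficients, i.e. homogeneous of degree $l(n\bn)$ with coefficients in $\mathbb{Z}$. I do not expect a genuine obstacle here: the only delicate point is bookkeeping — tracking the possibly negative exponents contributed by the boundary letters $(r,-1)$ and $(-1,r)$, verifying that the exponent sums are additive under the nested bracket (so that the $(xy)^{p(\bn)/2}$ of Lemma~\ref{crochet} is recovered on resonant words), and fixing the nesting convention once and for all so that the signs in the two recursions come out as written rather than with an overall factor $-1$.
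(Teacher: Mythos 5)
Your proposal is correct and follows essentially the same route as the paper: the authors likewise compute the bracket of two operators of the form $x^{a}y^{b}(\alpha\, x\partial_x+\beta\, y\partial_y)$, read off the coefficient formulas $P_{n,m}$ and $Q_{n,m}$ (which agree exactly with your expansion), and then assert that induction on the length gives the general case. You are in fact somewhat more careful than the paper, which leaves the inductive step and the choice of nesting convention implicit, whereas you make explicit that the class of monomial operators is closed under bracketing, that the exponents add, and that the recursion as stated presupposes $[B_{n\bn}]=[B_n,[B_{\bn}]]$.
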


Second, the correction mould can also be computed by a recursive formula from which we deduce :

\begin{lem}
\label{valcorrec}
For all $\bn \in A^* (X)$, the mould $Carr^{\bn}$ belongs to $\Q$ if $l(\bn)$ is odd and $i\Q$ if $l(\bn)$is even.
\end{lem}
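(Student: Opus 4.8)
The plan is to prove the sharper statement that $Carr^{\bn}\in i^{\,l(\bn)-1}\Q$ for every word $\bn$, with the convention $i^{-1}\Q=i\Q$; this is equivalent to the Lemma, since $i^{\,l(\bn)-1}\Q$ equals $\Q$ when $l(\bn)$ is odd and $i\Q$ when $l(\bn)$ is even. The argument would be an induction on the length $r=l(\bn)$, using the variance formula as the recursion. Two elementary observations do all the work. First, every letter of $A(X)$ lies in $\Z^2$ (Lemma \ref{letterXr}), and $X_{lin}$ has eigensystem $\lambda=(i,-i)$, so for any letter $n=(n^1,n^2)$ we get $\omega(n)=\langle n,\lambda\rangle=i(n^1-n^2)\in i\Z\subset i\Q$; moreover $\omega$ is additive, so $\omega(n_1+n_2)\in i\Z$ as well. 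Second, writing $\kappa(a):=i^{\,a-1}\Q$ for $a\in\NN$, one has the bookkeeping rules $\kappa(a)\cdot\kappa(b)=\kappa(a+b-1)$ (because $i^{a-1}i^{b-1}\Q=i^{a+b-2}\Q=i^{a+b-1}\Q$, using $-\Q=\Q$), and $\big(i\Z\setminus\{0\}\big)^{-1}\subset i\Q=\kappa(2)$.

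For the base of the induction, $Carr^{\emptyset}=0\in i\Q$, and for $r=1$ the earlier Lemma gives $Carr^{n}=1\in\Q=\kappa(1)$ if $n$ is resonant and $Carr^{n}=0$ otherwise. For the inductive step I would fix $\bn=n_1\cdots n_r$ with $r\geq2$. If $\bn$ is non-resonant, or if $\omega(n_j)=0$ for some $j$ (in particular if $\omega(n_1)=0$), then $Carr^{\bn}=0$ by the earlier Lemma and there is nothing to prove; this is the case one has to quarantine first, since the variance formula is only usable once $\omega(n_1)\neq0$. Otherwise $\omega(n_1)\in i\Z\setminus\{0\}$ and the variance formula yields
\begin{equation*}
Carr^{\bn}=\frac{1}{\omega(n_1)}\left(-\,Carr^{(n_1+n_2)\,n_3\cdots n_r}+\underset{n_1\mathbf{b}\mathbf{c}=\bn}{\sum}Carr^{n_1\mathbf{c}}\,Carr^{\mathbf{b}}\right).
\end{equation*}
The first term has length $r-1$, hence lies in $\kappa(r-1)$ by the induction hypothesis. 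In the sum, the summands with $\mathbf{b}=\emptyset$ vanish because $Carr^{\emptyset}=0$, and for $\mathbf{b}\neq\emptyset$ one has $l(n_1\mathbf{c})+l(\mathbf{b})=r$ with $1\leq l(n_1\mathbf{c}),\,l(\mathbf{b})\leq r-1$, so by induction $Carr^{n_1\mathbf{c}}\,Carr^{\mathbf{b}}\in\kappa(l(n_1\mathbf{c}))\cdot\kappa(l(\mathbf{b}))=\kappa(r-1)$. Hence the parenthesis lies in $\kappa(r-1)$, and multiplying by $1/\omega(n_1)\in\kappa(2)$ gives $Carr^{\bn}\in\kappa(2)\cdot\kappa(r-1)=\kappa(r)=i^{\,r-1}\Q$, which closes the induction.

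The one point that will need a little care is that the letter $n_1+n_2$ appearing in the variance formula need not belong to $A(X)$, so the induction should really be run over words in the whole lattice $\Z^2$ (equivalently, over arbitrary finite sequences of weights in $i\Z$); this is harmless, since the operators $B_n$ vanish for $n\notin A(X)$, so the value of $Carr(X)$ is unaffected, and the two driving facts above hold verbatim on $\Z^2$. Alternatively, the same induction can be phrased through the universality theorem: on resonant words $Carr^{\bn}=C_r(\omega(n_1),\dots,\omega(n_r))$, and the variance formula shows inductively that each $C_r$ is a rational function with rational coefficients, homogeneous of degree $-(r-1)$ in its arguments, so that evaluating at arguments in $i\Z$ lands in $i^{-(r-1)}\Q=i^{\,r-1}\Q$. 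Beyond this bookkeeping of powers of $i$ there is no analytic or combinatorial obstacle; in particular Lemma \ref{polycro} is not needed for this statement.
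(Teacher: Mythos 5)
Your proof is correct and follows exactly the route the paper intends: the paper does not write out a proof of Lemma \ref{valcorrec} but merely asserts that it follows from the recursive (variance) formula for $Carr^{\bullet}$, and your induction on length via that formula, with the bookkeeping $Carr^{\bn}\in i^{\,l(\bn)-1}\Q$ and the observation that all weights lie in $i\Z$, is the natural way to fill in that assertion. Your remark that the recursion forces one to extend the mould to words over the additive closure of the alphabet (because of the letter $n_1+n_2$) is a genuine point of care that the paper glosses over, and your treatment of it is sound.
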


The proof of Theorem \ref{rational} easily follows. 

\subsection{Proof of Theorem \ref{algebraic}}

Using Theorem \ref{rational} and the characterization of isochronous centers given by Theorem \ref{caractiso}, the set $\mathcal{L}_d$ is defined by the zero set of an infinite family of polynomials over $\C^{N(d)}$ given by
\begin{equation}
\mathcal{L}_d =\left \{ 
P\in \C^{N(d)} ,\ Ca_{2p} (X) =0,\ p\geq 1 
\right \} .
\end{equation}
We define the {\it ascending chain of ideals} $I_k$ generated by $\langle Ca_2 (\mathbf{p}) ,\dots ,Ca_{2k} (\mathbf{p})\rangle$  in $\C [\mathbf{p} ]$. By the {\it Hilbert Basis Theorem} (see \cite{cox}, Theorem 4, p.77), there exists an $M(d)\in \N^*$ such that $I_M =I_{M+1} =\dots $. We denote by $\mathcal{I}_d$ the resulting ideal. As a consequence, the set $\mathcal{L}_d$ can be obtained as (see \cite{cox}, Definition 8,p.81)
\begin{equation}
\mathcal{L}_d =\mathbf{V}(\mathcal{I}_d )=\left \{ \mathbf{p} \in \C^{N(d)} \mid\ f(\mathbf{p} )=0\ \mbox{\rm for all}\  f\in \mathcal{I}_d \right \} ,
\end{equation}
and corresponds to the affine variety (see \cite{cox}, Proposition 9 p.81) defined by 
\begin{equation}
\mathcal{L}_d =\mathbf{V}(f_1 ,\dots ,f_{s(d)} )=\left \{ \mathbf{p} \in \C^{N(d)} \mid\ f_i (\mathbf{p} )=0\ \mbox{\rm for all}\  i=1,\dots ,s(d) \right \} ,
\end{equation}
where the finite family of polynomials $f_i$, $i=1,\dots ,s(d)$ is a generating set of $\mathcal{I}_d$. As the polynomials defining this variety have coefficients in $\Q$ or $i\Q$ this concludes the proof.

\begin{rem}
Theorem \ref{algebraic} together with Theorem \ref{rational} gives explicit informations on the degree as well as on the growth of the rational coefficients entering in the definition of the affine variety. A natural question is up to which extend these informations can be used to provide a natural upper bound on the number of generators for the ideal generating $\mathcal{L}_d$ thanks to a constructive version of the Hilbert basis theorem. This will be explored in another work.
\end{rem}

\section{$\mathbb{C}^*$-invariance}

The resonant character of the correction has an interesting consequence on the rational algebraic variety of isochronous center. Indeed, let us consider the following action of $\mathbb{C}^*$ :

\begin{dfn}
Let $\lambda \in \mathbb{C}^*$, we denote by $T_\lambda$ the map
\begin{align*}
T_\lambda : \mathscr{C}^{N(d)} &\rightarrow \mathscr{C}^{N(d)} \\
p_{\bullet} & \mapsto \lambda^{\omega(\bullet)}p_{\bullet}
\end{align*}
where $\bullet$ is an arbitrary letter.
\end{dfn}

We extend this action for all monomials $p_\textbf{n}=p_1^{n_1}...p_r^{n_r}$, with \textbf{n} is a word $n_1 \cdot ... \cdot n_r$, we have:
\begin{align*}
T_\lambda(p_\textbf{n})&= T_\lambda(p_1^{n_1}...p_r^{n_r}), \\
&= \lambda^{\omega(n_1)+...+ \omega(n_r)}p_\textbf{n} \\
&=\lambda^{\omega(\textbf{n})}p_{\textbf{n}}.
\end{align*}

We denoted by $[B_\textbf{n}]=[B_{n_1 \cdot ... \cdot n_r}]=(xy)^{\frac{p(\textbf{n})}{2}}(P(\textbf{n)}x  \partial_x+Q(\textbf{n})y \partial_y)$ where, as we have just shown, $P(\textbf{n})$ and $Q(\textbf{n})$ are polynomial in the coefficient of $B_{n_1},...,B_{n_r}$. We have the following lemma which show the $\mathbb{C}^*$-invariance:
\begin{lem}
For all resonant word \textbf{n}, we have :
\begin{align*}
T_\lambda(P(\textbf{n}))=P(\textbf{n}) \ \text{and} \ T_\lambda(Q(\textbf{n}))=Q(\textbf{n}).
\end{align*}
\end{lem}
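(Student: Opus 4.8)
The plan is to compute the weight of $P(\textbf{n})$ and $Q(\textbf{n})$ under the torus action $T_\lambda$ directly from the recursive formula in Lemma~\ref{polycro}, and to show that for a resonant word this weight is zero. The key observation is that each differential operator $B_{n_i}$ carries a coefficient $p_{n_i}$ (and its companion $q_{n_i}$ deduced by the reality condition) which is assigned the weight $\omega(n_i)$ under $T_\lambda$; by the multiplicativity of the action on monomials, $P(\textbf{n})$ and $Q(\textbf{n})$ — being polynomials in the coefficients of $B_{n_1},\dots,B_{n_r}$, each monomial of which involves exactly one factor attached to each letter — are quasi-homogeneous of weight $\omega(n_1)+\dots+\omega(n_r)=\omega(\textbf{n})$. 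Since a resonant word satisfies $\omega(\textbf{n})=0$ by definition, $T_\lambda$ fixes both $P(\textbf{n})$ and $Q(\textbf{n})$.

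The steps, in order. First I would state the precise weight claim: for \emph{any} word $\textbf{n}=n_1\cdot\dots\cdot n_r\in A^*(X)$, $T_\lambda(P(\textbf{n}))=\lambda^{\omega(\textbf{n})}P(\textbf{n})$ and likewise for $Q(\textbf{n})$. Second, I would prove this by induction on $l(\textbf{n})$ using the recursion of Lemma~\ref{polycro}, namely
\begin{align*}
P(n\textbf{n}) &= (|\textbf{n}|^1-n^1)\,p_n\,P(\textbf{n}) + |\textbf{n}|^2\,q_n\,P(\textbf{n}) - n^2\,p_n\,Q(\textbf{n}),\\
Q(n\textbf{n}) &= (|\textbf{n}|^2-n^2)\,q_n\,Q(\textbf{n}) + |\textbf{n}|^1\,p_n\,Q(\textbf{n}) - n^1\,q_n\,P(\textbf{n}).
\end{align*}
Here one needs that $T_\lambda$ scales both $p_n$ and $q_n$ by $\lambda^{\omega(n)}$: for $p_n$ this is the definition, and for $q_n=q_{j,i}=\overline{p_{i,j}}$ one checks that the letter attached to $q_n$ has the same weight $\omega(n)$ (the companion operator $q_n x^{n^1} y^{n^2} y\partial_y$ sits at the same degree $(n^1,n^2)$, hence the same weight $\langle n,\lambda\rangle$). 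The integer prefactors $|\textbf{n}|^1-n^1$, $|\textbf{n}|^2$, $n^1$, $n^2$ are $T_\lambda$-invariant scalars, so applying $T_\lambda$ to each term on the right multiplies it by $\lambda^{\omega(n)}\cdot\lambda^{\omega(\textbf{n})}=\lambda^{\omega(n\textbf{n})}$ by the induction hypothesis and the morphism property of $\omega$. The base case $l(\textbf{n})=1$ is immediate: $P(n)$ and $Q(n)$ are $p_n$ and $q_n$ up to scalars, of weight $\omega(n)$. Third, I would specialize to a resonant word, where $\omega(\textbf{n})=0$, to conclude.

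I do not expect a genuine obstacle here; the only point requiring care is the bookkeeping of how the reality-condition coefficients $q_n$ transform under $T_\lambda$ — one must be sure that the \emph{definition} of $T_\lambda$ on the full coordinate set $\mathscr{C}^{N(d)}$ (which a priori only names the independent coefficients $p_\bullet$) is consistent with assigning $q_n$ the weight $\omega(n)$, so that the recursion of Lemma~\ref{polycro} is genuinely equivariant. This is where I would spend a sentence making the conventions explicit, after which the induction is routine. An alternative, cleaner route avoiding the recursion altogether is to argue directly from the closed form $[B_{\textbf{n}}]=(xy)^{p(\textbf{n})/2}(P(\textbf{n})x\partial_x+Q(\textbf{n})y\partial_y)$: applying the diagonal linear change $x\mapsto \mu x$, $y\mapsto \nu y$ to the vector field $X$ and tracking how it permutes with $T_\lambda$ (a standard feature of mould expansions, where the grading by weight is exactly the grading detected by such torus actions) yields the invariance of the resonant Lie brackets in one stroke. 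I would present the inductive proof as primary since all its ingredients are already established in the excerpt.
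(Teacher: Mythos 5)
Your proof is correct and takes essentially the same route as the paper: both rest on the observation that every monomial of $P(\textbf{n})$ and $Q(\textbf{n})$ contains exactly one coefficient attached to each letter, hence is quasi-homogeneous of weight $\omega(\textbf{n})$ under $T_\lambda$, which vanishes for a resonant word. The paper merely invokes the extension of $T_\lambda$ to monomials $p_{\textbf{n}}$ and concludes in one line, whereas you verify the quasi-homogeneity by induction through the recursion of Lemma~\ref{polycro} and address the transformation of the $q_n$'s explicitly — a more careful rendering of the same argument.
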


\begin{proof}
By definition of a resonant word, we have $\omega(\textbf{n})=0$. So :
\begin{align*}
T_\lambda(p_\textbf{n})= \lambda^{\omega(\textbf{n})}p_\textbf{n}= \lambda^0 p_{\textbf{n}}=p_{\textbf{n}}.
\end{align*}
\end{proof}

Finally we can generalise this lemma in the following corollary :
\begin{cor}
For all $\lambda \in \mathbb{C}^*$, the algebraic variety $\mathscr{L}_d$ of the isochronous centers is invariant under the action of $T_{\lambda}$. 
\end{cor}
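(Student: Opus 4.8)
The plan is to read off the $\C^*$-invariance directly from the explicit algebraic form of the correction obtained in Theorem \ref{rational}, combined with the weight bookkeeping of the preceding Lemma.

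First I would reformulate membership in $\mathscr{L}_d$. By Theorem \ref{caractiso} together with Theorem \ref{rational}, a point $\mathbf{p}\in\C^{N(d)}$ belongs to $\mathscr{L}_d$ if and only if $Ca_{2p}(\mathbf{p})=0$ for every $p\geq 1$. Hence it is enough to prove that each polynomial function $Ca_{2p}$ on $\C^{N(d)}$ is invariant under the substitution induced by $T_\lambda$, that is $Ca_{2p}(T_\lambda\mathbf{p})=Ca_{2p}(\mathbf{p})$ for all $\lambda\in\C^*$. Granting this, $\mathbf{p}\in\mathscr{L}_d$ if and only if $T_\lambda\mathbf{p}\in\mathscr{L}_d$; since $T_\lambda$ is a bijection with inverse $T_{1/\lambda}$ (because $T_\lambda\circ T_\mu=T_{\lambda\mu}$), this yields $T_\lambda(\mathscr{L}_d)=\mathscr{L}_d$.

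Second, for the invariance of $Ca_{2p}$ I would use the decomposition $Ca_{2p}(\mathbf{p})=\sum_{i=1}^{2p}\frac{1}{i!}Ca_{2p,i}(\mathbf{p})$ with $Ca_{2p,i}(\mathbf{p})=\sum Carr^{\bn}P(\bn)$, the sum ranging over $\bn\in A^*(X)$ with $p(\bn)=2p$ and $l(\bn)=i$. By the Lemma on the mould $Carr^\bullet$, the only nonzero contributions come from resonant words, i.e. those with $\omega(\bn)=0$; and for a resonant word the Lemma just proved gives $T_\lambda(P(\bn))=P(\bn)$ (each monomial occurring in $P(\bn)$ is a product of letters whose weights add up to $\omega(\bn)=0$, so it is multiplied by $\lambda^{\omega(\bn)}=1$). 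Since the mould values $Carr^{\bn}\in\Q\cup i\Q$ are universal constants, unaffected by $T_\lambda$, the map $T_\lambda$ fixes each $Ca_{2p,i}$, hence each $Ca_{2p}$, which is exactly what was needed.

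The argument is essentially pure weight bookkeeping and I do not expect a serious obstacle; the one point to keep in mind is that the whole sum defining $Ca_{2p,i}$ effectively runs over resonant words only — this is the vanishing clause of the Lemma on $Carr^\bullet$ — so that the weight-zero invariance of $P(\bn)$ can be applied term by term. Equivalently, one may observe abstractly that $Carr_{2p}(X)$ is assembled entirely from resonant words and is therefore a fixed point of the action of $T_\lambda$ on (formal) differential operators; the explicit decomposition merely makes this completely concrete.
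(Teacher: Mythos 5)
Your argument is correct and is essentially the paper's own proof, only written out in more detail: the paper likewise observes that only resonant words contribute to the correction and then invokes the lemma $T_\lambda(P(\mathbf{n}))=P(\mathbf{n})$ for resonant $\mathbf{n}$. Your added care about the decomposition into the $Ca_{2p,i}$ and the bijectivity of $T_\lambda$ (via $T_{1/\lambda}$) fills in steps the paper leaves implicit, but does not change the route.
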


\begin{proof}
To prove this corollary we just have to remind that only the resonant word contribute to the linearisability. We can conclude by the above lemma.
\end{proof}

\newpage
\part{Proof of the main results}
\label{proof}
\setcounter{section}{0}

\section{Proof of Theorem \ref{main1}}

Let $X$ be a real Homaltonian vector field of even degree $2n$ of the form :
\begin{align*}
X=X_{lin}+\underset{r=2}{\overset{2n}{\sum}}X_r.
\end{align*}
For each $X_r$, we can associate its depth as follows :
\begin{center}
\begin{tabular}{|l|p{2.1cm}|c|}
\hline
\bf $X_r$ & Depth \\
\hline
$X_2$ & 1 \\
$X_3$ & 2 \\
$X_3$ & 3 \\
... & ... \\
$X_{2n-1}$ & $2n-2$ \\
$X_{2n}$ & $2n-1$ \\
\hline
\end{tabular}
\end{center} 
By Theorem 7, we are only interested by the even depth. As a consequence, we look for all possible combinations of arbitrary length which give rise under Lie bracket to an even depth vector field. In the following, we denote by $[X_{k_1} ,\dots ,X_{k_r} ]$ the set of operators that one obtain by nested Lie brackets of homogeneous differential operators $B_{n_i}$ coming from $X_{k_i}$, $i=1,\dots ,r$. As an example, we have :
\begin{center}
\begin{tabular}{|l|p{2.1cm}|c|}
\hline
\bf $X_r$ and $[X_r,X_{r'}]$ & Depth \\
\hline
$X_3$ & 2 \\
$[X_2,X_2]$ & 2 \\
\hline
\hline
$X_5$ & 4 \\
$[X_4,X_2]$ & 4 \\
$[X_3,X_3]$ & 4 \\
$[X_3,X_2,X_2]$ & 4 \\
\hline
\hline
$X_7$ & 6 \\
$[X_6,X_2]$ & 6 \\
$[X_5,X_3]$ & 6 \\
$[X_4,X_4]$ & 6 \\
$[X_5,X_2,X_2]$ & 6 \\
$[X_4,X_3,X_2]$ & 6 \\
$[X_4,X_2,X_2,X_2]$ & 6 \\
$[X_3,X_2,X_2,X_2,X_2]$ & 6 \\
$[X_2,X_2,X_2,X_2,X_2,X_2]$ & 6 \\
\hline 
\end{tabular} 
\end{center}

The correction in depth $2$ is given by :
\begin{align*}
Carr_2(X)=Carr_{2,1}(X_3)+Carr_{2,2}(X_2),
\end{align*}
As the depth is a morphism we have the contribution of the Lie bracket of $X_2$, we also have the contribution of the resonant letter of $X_3$. By the Fundamental Lemma, the correction is given by 
\begin{align*}
Carr_2(X)=p_{1,1}+i \left( 6\vert p_{1,0} \vert^2+ \frac{2}{3}\vert p_{-1,2}\vert\right) .
\end{align*}
By the linearisation criterion, we must have $Carr_2(X)=0$. This implies that
\begin{equation}
\left \{
\begin{array}{lll}
\mbox{\rm Re} (p_{1,1}) & = & 0 ,\\
-\mbox{\rm Im} (p_{1,1}) & = & 6\vert p_{1,0} \vert^2+ \frac{2}{3}\vert p_{-1,2}\vert .
\end{array}
\right .
\end{equation}
As $X$ is real and Hamiltonian, the first equation is always satisfied. The second one has only a non trivial solution if  and only if $Im(p_{1,1}) < 0$. The situation when $Im(p_{1,1}) \geq 0$ leads to two distinct cases. When $Im(p_{1,1})=0$, the Birkhoff sphere reduce to $0$ and we obtain $X_2 =0$. When $Im(p_{1,1}) >0$, the equation can not be satisfied and the vector field is then nonisochronous.\\

Assume that $p_{1,1}=0$ then $X_2 =0$ and we are reduce to the case 
\begin{equation}
X=X_{lin} +X_3 +\dots +X_r .
\end{equation}
As $p_{1,1}=0$, the first non zero term of the correction is $Carr_4 (X)$. By the fundamental Lemma, the term $Carr_4 (X)$ has exactly the same algebraic structure than the preceding $Carr_2 (X)=0$ case. The role of $X_2$ is played by $X_3$ and the role of the resonant term of $X_3$ is played by the resonant term of $X_5$. Here again, we recover the same dichotomy between the case $Im(p_{2,2})=0$ and $Im (p_{2,2} )>0$. In the first case, we obtain that $X_3 =0$ and we are leaded to the same situation as before. Otherwise if $Im (p_{2,2} )>0$ the term $Carr_4 (X)$ can not be zero and we have a nonisochronous center.\\

The preceding discussion is representative of the general strategy of proof. Let us assume that $p_{j,j}=0$ for $j=1,...,r-1$. Then, we prove by induction that $X_2 = \dots =X_{r-1}=0$. In order to finish the proof, two cases must be discussed depending the value of $r$.\\ 

{\bf Case 1 : $r<n-1$}. The component $X_{2r+1}$ is non trivial due to the condition $Im(p_{r,r}) >0$. By the  Fundamental Lemma and the linearisability criterion, we must have  
\begin{equation}
Carr_{2r} (X)=p_{r,r}+i\left( \underset{k=[\frac{r+1}{2}]+1}{\overset{r}{\sum}} \frac{r(r+1)}{(r-k+1)^2}\vert p_{k-1,r-k} \vert^2 +\frac{r}{r-1} \vert p_{-1,r} \vert ^2 \right)=0
\end{equation}
As $Im(p_{r,r}) >0$, this equality can not be satisfied and $X$ is nonisochronous.\\

{\bf Case 2 : $r=n-1$}. In this case, we are reduced to an homogeneous perturbation of degree $2n$ and the correction is given by 
\begin{align*}
Carr_{2n-2} (X)=\underset{k=[\frac{r+1}{2}]+1}{\overset{r}{\sum}} \frac{r(r+1)}{(r-k+1)^2}\vert p_{k-1,r-k} \vert^2 +\frac{r}{r-1} \vert p_{-1,r} \vert ^2 =0
\end{align*}
As $X_{2n}$ is nontrivial, this equation can not be satisfied and $X$ is nonisochrnous. \\

This concludes the proof of the Theorem.

\section{Proof of Theorem \ref{main2}}

The proof follows the lines of those of Theorem \ref{main1}. We have to distinguish two cases : $k$ is even or odd.\\

{\bf Case 1: $k$ is even}. The vector field $X_k$ does not contain resonant terms. As a consequence, its first contribution to the correction appears in depth $2(k-1)$ corresponding to resonant Lie brackets of homogeneous differential operators in $X_k$ of length two. As $l\leq k-1$, this implies that $2l-1 <2l \leq 2(k-1)$. The even component between $X_k$ and $X_{2l}$ will come into play in the correction only with a greater depth as $2(k-1)$ by Lie brackets of length at least two. In the same way, for odd components, the resonant term will intervene in the correction with a strictly smaller depth in length one and the other terms in depth greater than $2(k-1)$ by a Lie brackets of length at least two.\\

As a consequence, the correction term coming from an even $k$ is given by : 
\begin{equation}
Carr_{2(k-1)}(X)=i\left( \underset{k=[\frac{r+1}{2}]+1}{\overset{r}{\sum}} \frac{r(r+1)}{(r-k+1)^2}\vert p_{k-1,r-k} \vert^2 +\frac{r}{r-1} \vert p_{-1,r} \vert ^2 \right).
\end{equation}
In order to satisfy the linearisability criterion, we must have $Carr_{2(k-1)}=0$. If the component $X_k$ is non trivial then the system is already non isochronous. Otherwise, we have $X_k =0$ and we are leaded to the same problem but with an odd component.\\

{\bf Case 2 : $k$ is odd}.  In this case the vector field $X_k$ contain a resonant homogeneous operator. Let us write $k=2m+1$ then $B_{m,m}$ is of depth $2m$ and weight zero. We have 
\begin{equation}
Carr_{k-1} (X)=Carr_{2m} (X) =B_{m,m} .
\end{equation}
By the linearisability criterion, $Carr_{k-1} (X)=0$  and the resonant term $B_{m,m}$ in $X_k$ is zero. The contribution of $X_k$ in length $2$ follows the same argument as for the even case and we deduce that finally $X_k =0$. \\

As a consequence, we can prove by induction that in order to be linearisable the components $X_k, \dots ,X_{2l}$ must be zero. But, by assumption, we have that $X_{2l}$ is non trivial. As a consequence, the vector field $X$ is necessarily nonisochronous. 

\section{Proof of Theorem \ref{thm4}}

The strategy of proof follows those of Theorem \ref{main2}. The main observation is that there exists no interactions between each family of vector fields $\{ X_k ,\dots ,X_{2l} \}$ and $\{ X_{c_n} ,\dots ,X_{2(c_n -1)} \}$, $n=1,\dots ,m$. Indeed, let us first analyse the depth of all these objects. We have : 
\begin{center}
\begin{tabular}{|l|p{2.1cm}|c|}
\hline
\bf $X_r$ & Depth \\
\hline
$X_k$ & $k-1$ \\
$X_{k+1}$ & $k$ \\
$\vdots$ & $\vdots$ \\
$X_{2l-1}$ & $2l-2$ \\
$X_{2l}$ & $2l-1$ \\
0 & 0 \\
$X_{4l}$ & $4l-1$ \\
$\vdots$ & $\vdots$ \\
$X_{8l-2}$ & $8l-3$ \\
... & ... \\
$X_{c_m}$ & $c_m-1$ \\
$\vdots$ & $\vdots$ \\
$X_{2(c_m-1)}$ & $2(c_m-1)-1$ \\
\hline
\end{tabular} 
\end{center}
Following the same lines as for Theorem \ref{main2}, we see that the arguments based on the contributions of a given component belonging to $\{ X_k ,\dots ,X_{2l} \}$ are valid. In other words, we easily proved that in order to be linearisable, then one must have $X_k =\dots =X_{2l-2} =0$. The last argument concerning $X_{2l}$ is also satisfies because the first contribution of $X_{2l}$ to the correction is of length two and depth $4l-2$ which is not disturbed by terms of the remaining family $\{ X_{c_n} ,\dots ,X_{2(c_n -1)} \}$, $n=1,\dots ,m$ as the minimal contribution of these terms to the correction is in depth $4l-1$.\\

As a consequence, a vector fields of this type will be linearisable if $X_k =\dots =X_{2l} =0$.\\

By the same argument, we see that there exists no interaction between the family $\{ X_{c_1} ,\dots$ $\dots ,X_{2(c_1 -1)} \}$ and the remaining one $\{ X_{c_n} ,\dots ,X_{2(c_n -1)} \}$, $n=2,\dots ,m$. We deduce that a linearisable vector field of this type must satisfy $X_{c_1} =\dots =X_{2(c_1 -1)}$.\\

By induction, we easily deduce that a vector field of this type is linearisable if and only if all the components vanish. As by assumption we are considering a non trivial vector field, we are leaded to a contradiction and the vector field is necessarily nonisochronous. 

\newpage

\part{Conclusion and perspectives}
\setcounter{section}{0}

\section{Toward a complete proof of the Jarque-Villadelprat conjecture}

Our main results give a very strong support to the Jarque-Villadelprat conjecture. The remaining cases always deal with the role of the resonant term in the deformation of the Birkhoff's spheres. However, the phenomenon which is working for a quartic perturbation, which was precisely studied by Jarque and Villadelprat using geometrical methods, must applies in the same way for arbitrary degrees. Indeed, as we have seem in our derivation of the main results, the algebraic structure of the correction during the cancellation process does not change and can be closely investigated. We have then some directions in order to solve completely the Jarque-Villadelprat conjecture :
\begin{itemize}
\item Can we prove using an extension of our method the quartic case ?

\item Can we extend the geometrical method of Jarque-Villadelprat in the cases which are let open by our work ? 

\item Can we prove the remaining cases using other methods ?
\end{itemize}
We believe that a better understanding of the algebraic structure of the correction will be of importance in order to go further.

\section{Effective Hilbert basis and the isochronous centers affine variety} 

A second aspect of our work is the explicit and algorithmic description of the isochronous center affine variety. As already said, we have informations about the growth of the degree and the coefficients entering in the description of this variety. A natural question is then to look for effective version of the Hilbert basis theorem in order to get some informations about the minimal number of generators of the ideal. The isochronous centers seem to be more tractable than the usual center. However, it is clear that any advance in this direction will have consequences on the local 16th Hilbert problem. Indeed, the same kind of combinatoric and tools can be used to obtain analogous information for centers of polynomial vector fields (see \cite{cs}). 

\section{Isochronicity for complex Hamiltonian systems}

In \cite{llibre}, the authors study isochronicity of complex Hamiltonian systems when the linear part has for spectrum $(1,-1)$. Our method and results extend naturally to this case and give an explicit and algorithmic description of the isochronous centers affine variety. This will be the subject of a forthcoming work.

\newpage
\begin{appendix}

\section{Notations}

\begin{itemize}
\item $A(X)$ alphabet associated to a vector field $X$;
\item $A^*(X)$ set of words given by the alphabet $A(X)$;
\end{itemize}
\begin{itemize}
\item $\mathcal{B}(X)$ set of homogeneous differential operator associated to a vector fields $X$;
\item $Carr(X)$ the correction associated to a vector fields $X$;
\item $Carr^\bullet$ the mould of the correction;
\item $n$ element of $A(X)$;
\item $\textbf{n}$ element of $A^*(X)$;
\item $B_n$ element of $\mathcal{B}(X)$;
\item $B_{\textbf{n}}$ element of $(\mathcal{B}(X), \circ)$;
\item $\omega$ the weight application;
\item $p$ the depth application;
\item $\textit{ping}$ the application which inverts the two components of a letter;
\item $\textit{ret}$ the application on word which inverts the order of the letters;
\end{itemize}

\section{Properties of the Correction's mould}

To prove the different results about the correction, we use another equivalent definition of the mould of the correction using a prenormal form (see \cite{ev2}, p.267 \textbf{Lemma 3.2}) :
\begin{align*}
I^{\textbf{n}} - Carr^{\textbf{n}} = \underset{n \rightarrow + \infty }{\lim}\left((I^\bullet - M^\bullet)^{\circ \ r}\right)^{\textbf{n}}
\end{align*}
where $\textbf{n}=n_1\cdot... \cdot n_r$, $I^\bullet$ is the unit mould for composition, that is $I^{\textbf{n}}=1$ if $\ell (\textbf{n})=1$ and $I^{\textbf{n}}=0$ for $\ell (\textbf{n}) \neq 1$. The mould $M^\bullet$ is a mould of a prenormal form (see \cite{cr1}), so we have $M^{\textbf{n}}=0$ for any non resonant word $\textbf{n}$ and for the empty word. Moreover we have :
\begin{align*}
I^{\textbf{n}}-Carr^{\textbf{n}}=((I^\bullet - M^\bullet)^{\circ \ r})^{\textbf{n}}=((I^\bullet - M^\bullet)^{\circ \ r+k})^{\textbf{n}}
\end{align*}
where $k \in \NN$.
 We remind the composition of two moulds $M^{\bullet}$ and $N^{\bullet}$ :
 \begin{align*}
 ( M^{\bullet} \circ N^{\bullet})^{\textbf{n}}=\underset{1 \leq k \leq \ell (\textbf{n})}{\sum} \underset{\omega_1 \cdot ... \cdot \omega_k}{\overset{*}{\sum}} M^{\parallel \omega_1 \parallel \cdot ... \cdot \parallel \omega_k \parallel}\times N^{\omega_1}\times... \times N^{\omega_k}
\end{align*}  
where $\underset{\omega_1 \cdot ... \cdot \omega_k}{\overset{*}{\sum}}$ means the sum on all the decomposition of the word $\textbf{n}$ in k words. Moreover, $\parallel w_j \parallel$ is a letter obtained by the word $\omega_j$ summing all its letter if the alphabet is provided with a law of semi-group. For more detail, we can see \cite{cr2}.
Now, we can prove the Lemma 1 and Lemma 2.

\subsection*{Proof of Lemma 1}

$1)$ Using the above definition of the mould we have :
\begin{align*}
I^\emptyset-Carr^{\emptyset}&=((I^\bullet)-M^\bullet)^{\circ 0}) ^{\emptyset}\\
&=I^\emptyset-M^\emptyset,
\end{align*}
as $I^\emptyset=M^\emptyset=0$, we have $Carr^\emptyset=0$.
 \\ 
 \\
$2)$ Let $\textbf{n}=n_1\cdot ... \cdot n_r$ a non resonant word of length $r$, so we have :
\begin{align*}
I^{\textbf{n}}-Carr^{\textbf{n}}=((I^\bullet - M^\bullet)^{\circ \ r})^{\textbf{n}},
\end{align*}
if $\textbf{n}$ is a letter i.e $\ell (\textbf{n})=1$, we have : 
\begin{align*}
I^{\textbf{n}}-Carr^{\textbf{n}}&=((I^\bullet - M^\bullet)^{\circ \ 1})^{\textbf{n}}\\
&=(I^\bullet-M^\bullet)^{\parallel \textbf{n} \parallel }(I^\bullet-M^\bullet)^{\textbf{n}},
\end{align*}
where $M^{\parallel \textbf{n} \parallel}=M^{\textbf{n}}=0$ and $I^\textbf{n}=I^{\parallel \textbf{n} \parallel}=1$ hence $Carr^\textbf{n}=0$. \\ 
If the length of $\textbf{n}$ is greater than $2$ :
\begin{align*}
I^{\textbf{n}}-Carr^{\textbf{n}}&=\left( (I^{\bullet}-M^{\bullet} )^{\circ \ r}\right)^{\textbf{n}}\\
&=\left( (I^{\bullet}-M^{\bullet} )^{\circ \ r+1}\right)^{\textbf{n}} \\
&=\left( (I^{\bullet}-M^{\bullet}) \circ (I^{\bullet} - M^{\bullet})^{\circ \ r} \right)^{\textbf{n}}  \\
&=\left((I^{\bullet} - M^{\bullet}) \circ  (I^{\bullet}-Carr^{\bullet}) \right)^{\textbf{n}} \\
&= \underset{1 \leq k \leq r}{\sum} \ \ \underset{w_1\cdot ... \cdot w_k=\textbf{n}}{\sum}(I^{\bullet}-M^{\bullet})^{\parallel w_1 \parallel \cdot ... \cdot \parallel w_k \parallel}(I^{\bullet}-Carr^{\bullet})^{w_1}...(I^{\bullet}-Carr^{\bullet})^{w_k}.
\end{align*}
By induction, we assume the result is true in length $r-1$. As \textbf{n} is non resonant, among all the decomposition in $k$ sub-word, there is at least one of these sub-words which is non resonant for each sum and we denote by $w_j$ this word. So by induction, $I^{w_j}-Carr^{w_j}=0$ if $\ell (w_j)\geq 2$. \\
 If this sub-word $w_j$ is a letter, we treat with the term $(I^{\bullet} - M^{\bullet})^{\parallel w_1 \parallel \cdot ... \cdot \parallel w_j \parallel \cdot ... \cdot \parallel w_k \parallel}$, as $\textbf{n}$ is non resonant and $\parallel w_1 \parallel \cdot ... \cdot \parallel w_j \parallel \cdot ... \cdot \parallel w_k \parallel$ also is, $M^{\parallel w_1 \parallel \cdot ... \cdot \parallel w_j \parallel \cdot ... \cdot \parallel w_k \parallel}=0$  by definition of a prenormal normal, moreover $\ell(\textbf{n})\geq 2$ , $I^{\parallel w_1 \parallel \cdot ... \cdot \parallel w_j \parallel \cdot ... \cdot \parallel w_k \parallel}=0$.
\\
\\
$3)$ We prove this result by induction on the length.
If $\ell (\textbf{n})=2$, $\textbf{n}=n_1\cdot n_2$. If $n_1$ is resonant, as \textbf{n} is resonant, $n_2$ is also resonant. So we have :
\begin{equation}
\left .
\begin{array}{lll}
Carr^{\textbf{n}} & = & -(I^{\parallel\textbf{n}\parallel}-Carr^{\parallel \textbf{n} \parallel})(I^{\textbf{n}}-Carr^{\textbf{n}})\\
& & -(I^{\parallel n_1 \parallel \cdot \parallel n_2 \parallel}-Carr^{\parallel n_1 \parallel \cdot \parallel n_2 \parallel})(I^{n_1}-Carr^{n_1})(I^{n_2}-Carr^{n_2}).
\end{array}
\right .
\end{equation}
As $I^{\parallel\textbf{n}\parallel}-Carr^{\parallel \textbf{n} \parallel}=1-1=0$ and $I^{n_1}-Carr^{n_1}=0$, we have $Carr^{\textbf{n}}=0$ if $\ell (\textbf{n})=2$. Now we assume the result is verified in length $r-1$. We consider $\textbf{n}=n_1\cdot...\cdot n_r$ such that one letter $n_j$ is resonant.
We have to study the following equality, 
\begin{equation}
\left .
\begin{array}{lll}
I^{\textbf{n}}-Carr^{\textbf{n}}& = & \left[ (I^{\bullet}-M^{\bullet} )^{\circ \ r}\right ]^{\textbf{n}} ,\\
 & = & \underset{1 \leq k \leq r}{\sum} \ \ \underset{w_1\cdot ... \cdot w_k=\textbf{n}}{\sum}(I^{\bullet}-M^{\bullet})^{\parallel w_1 \parallel \cdot ... \cdot \parallel w_k \parallel}(I^{\bullet}-Carr^{\bullet})^{w_1}...(I^{\bullet}-Carr^{\bullet})^{w_k}.
\end{array}
\right .
\end{equation}
There exists an integer $l$ such that $n_j$ appears in the decomposition of one $w_l$ for $1\leq l\leq k$. \\
Either $\ell (w_l)=1$, so $w_l=n_j$ and $I^{w_l}-Carr^{w_l}=1-1=0$, or $\ell (w_l)\geq 2$, so by induction hypothesis $Carr^{w_l}=0$ and $I^{w_l}=0$ by definition so $Carr^{\textbf{n}}=0$.

\subsection*{Proof of Lemma 2}

$1)$ Let $n$ a letter such that $\omega(n)=0$. By the above definition, 
\begin{align*}
I^n-Carr^n=I^n-M^n,
\end{align*}
so $Carr^n=M^n$, where $M^{\bullet}$ is a prenormal form. We can take for example the mould $Tram^{\bullet}$ (see \cite{cr2}), which is the mould of the Poincar\'e-Dulac normal form. So $Carr^n=Tram^n=1$. 
\\
\\
$2)$ If $\omega(n_1n_2)=0$, we have $\omega(n_1)=-\omega(n_2)$ as $\omega$ is an morphism. Using the $\textbf{\textit{Theorem 5}}$, we have :
\begin{align*}
\omega(n_1)Carr^{n_1\cdot n_2}+Carr^{n_1+n_2}&=Carr^{n_1}Carr^{n_2}+Carr^{n_1n_2}Carr^{\emptyset}\\
&=0
\end{align*}
because $Carr^{n_1}=Carr^{n_2}=Carr^{\emptyset}=0$ as $\omega(n_i)\neq 0$. Finally : 
\begin{align*}
Carr^{n_1\cdot n_2}=\frac{-1}{\omega(n_1)}.
\end{align*}
We also can prove this result by the other definition :
\begin{align*}
I^{n_1\cdot n_2}-Carr^{n_1\cdot n_2}&=\left( (I^{\bullet}-M^{\bullet})^{\circ 2}\right)^{n_1\cdot n_2}\\
&=\left( (I^{\bullet}-M^{\bullet}) \circ (I^{\bullet}-Carr^{\bullet}) \right)^{n_1 \cdot n_2}\\
&=(I^{\bullet}-M^{\bullet})^{\parallel n_1 \cdot n_2 \parallel}(I^{\bullet}-Carr^{\bullet})^{n_1 \cdot n_2}+\\
& +(I^{\bullet}-M^{\bullet})^{\parallel n_1 \parallel \cdot \parallel n_2 \parallel}(I^{\bullet}-Carr^{\bullet})^{n_1}(I^{\bullet}-Carr^{\bullet})^{n_2}.
\end{align*}
As above we will use the mould $Tram^{\bullet}$ in length 2, $Carr^{n_1 \cdot n_2}=Tram^{n_1 \cdot n_2}=\frac{\omega(n_2)-\omega(n_1)}{\omega(n_1)\omega(n_2)}$. As $\omega(n_1)=-\omega(n_2)$, we finally have : 
\begin{align*}
Carr^{n_1 \cdot n_2}=\frac{1}{\omega(n_1)}.
\end{align*}
As we can see the result differs by a multiplication by $-1$. It is due to the fact in \cite{ev1} and \cite{ev2}, the nested Lie brackets are taken in this form $[B_{n_1 \cdot ... \cdot n_r}]=[B_{n_r},[B_{n_{r-1}},[...[B_{n_2},B_{n_1}]..]]$ whereas in \cite{cr1} and \cite{cr2} we consider $[B_{n_1 \cdot ... \cdot n_r}]=[...[B_{n_1},B_{n_2}],...],B_{n_{r-1}}],B_{n_r}]$. And we have the relation: 
\begin{align*}
[B_{n_r},[B_{n_{r-1}},[...[B_{n_2},B_{n_1}]..]]=(-1)^{r+1}[B_{n_1},B_{n_2}],...],B_{n_{r-1}}],B_{n_r}].
\end{align*}
$3)$ We have to remark in length 3, there is not any sub-word which resonant else there is a resonant letter but using the previous lemma the correction is equal to zero. 

Using \textbf{\textit{Theorem 5}} we have:
\begin{equation}
\left .
\begin{array}{lll}
\omega(n_1)Carr^{n_1 \cdot n_2 \cdot n_3}+Carr^{(n_1+n_2) \cdot n_3}&=&
Carr^{n_1 \cdot n_3}Carr^{n_2}\\
& & + Carr^{n_1 \cdot n_2 \cdot n_3}Carr^{\emptyset}+Carr^{n_1}Carr^{n_2\cdot n_3}.
\end{array}
\right .
\end{equation}
We assume $\omega(n_1) \neq 0 \neq \omega(n_2 \cdot n_3)$ and we have to remark $n_1 \cdot n_3$ is not resonant else $n_2$ is resonant too and by the previous lemma the correction is equal to zero, so we have :
\begin{align*}
Carr^{n_1 \cdot n_2 \cdot n_3}&=\frac{1}{\omega(n_1)(\omega(n_1)+\omega(n_2))}.
\end{align*}
As in length 2, we can use the mould $Tram^{\bullet}$ to compute the correction : 
\begin{align*}
I^{n_1\cdot n_2 \cdot n_3}-Carr^{n_1 \cdot n_2 \cdot n_3}&=\left( (I^{\bullet}-Tram^{\bullet})^{\circ \ 3} \right)^{n_1 \cdot n_2 \cdot n_3}\\
&= \left( (I^{\bullet}-Tram^{\bullet})^{\circ \ 4} \right)^{n_1 \cdot n_2 \cdot n_3}\\
&=(I^{\bullet}-Tram^{\bullet}) \circ (I^{\bullet}-Carr^{\bullet})^{n_1 \cdot n_2 \cdot n_3} \\
&= -Tram^{n_1 \cdot n_2 \cdot n_3}.
\end{align*}
So $Carr^{n_1 \cdot n_2 \cdot n_3}=Tram^{n_1 \cdot n_2 \cdot n_3}=\frac{1}{\omega(n_1)(\omega(n_1)+\omega(n_2))}$.

\section{Technical results}

\subsection{Proof of Theorem \ref{rational}}

For all $i=1,\dots ,2p$, we have using the Theorem of projection
\begin{equation}
\left .
\begin{array}{lll}
Carr_{2p,i} (X) & = & \di\frac{1}{i!} \underset{p(\bn )=2p,\, l(\bn )=i}{\underset{\bn\in A^*(X)}{\sum}} Carr^{\bn}[B_{\bn}] ,\\
 & = & \di\frac{1}{i!} \underset{p(\bn )=2p,\, l(\bn )=i}{\underset{\bn\in A^*(X)}{\sum}} Carr^{\bn}
 (xy)^{\frac{p(\textbf{n})}{2}}(P(\textbf{n})x\partial_x+Q(\textbf{n})y\partial_y),\\
 & = & \di\frac{1}{i!} (xy)^{p} \underset{p(\bn )=2p,\, l(\bn )=i}{\underset{\bn\in A^*(X)}{\sum}} Carr^{\bn}
 (P(\textbf{n})x\partial_x+Q(\textbf{n})y\partial_y) .
\end{array}
\right .
\end{equation}

By definition of $Ca_{2p,i} (X)$ we obtain
\begin{equation}
Carr_{2p,i} (X) = \di\frac{1}{i!} (xy)^{p} 
\left [ 
Ca_{2p,i} (X) x\partial_x +\overline{Ca_{2p,i} (X)} y \partial y 
\right ] .
\end{equation}

\subsection{Proof of Lemma \ref{polycro}}

Let $B_n=B_{(n^1,n^2)}=x^{n^1}y^{n^2}(p_n x \partial_x +q_n y \partial_y)$ and $B_m=B_{(m^1,m^2)}=x^{m^1}y^{m^2}(p_mx\partial_x+q_my\partial_y)$. Then, the Lie brackets of $B_n$ and $B_m$ is :
\begin{align*}
[B_{nm}]:=[B_n,B_m]& =x^{n^1+m^1}y^{n^2+m^2}(P_{n,m}x \partial_x+Q_{n,m}y \partial_y),\\
 & =x^{\mid nm\mid^1}y^{\mid nm\mid^2}(P_{n,m}x \partial_x+Q_{n,m}y \partial_y)
\end{align*}
where $P_{n,m}$ and $Q_{n,m}$ are polynomials in the coefficients of $B_n$ and $B_m$, precisely : 
\begin{align*}
P_{n,m}&=(m^1-n^1)p_np_m+m^2q_np_m-n^2p_nq_m, \\ 
Q_{n,m}&=(m^2-n^2)q_nq_m+m^1p_nq_m-n^1q_np_m.
\end{align*}
We easily prove by induction that all the Lie bracket in any length are on the above form.

\end{appendix}

\end{document}